\journal{Journal Functional Analysis}
\newtheorem{theo}{Theorem}[section] 
\newtheorem{lem}[theo]{Lemma}
\newtheorem{prop}[theo]{Proposition}
\newtheorem{cor}[theo]{Corollary}
\newtheorem{defi}[theo]{Definition}
\newtheorem{remark}[theo]{Remark}
\def\A{\mathcal A}
\def\build#1_#2^#3{\mathrel{\mathop{\kern 0pt#1}\limits_{#2}^{#3}}}
\def\C{\mathbb C }
\def\E{\mathbb E }
\def\epsilon{{\varepsilon}}
\def\F{\mathcal F}
\def\H{{H}}
\def\L{\mathcal L}
\def\M{\mathbb M}
\def\N{\mathbb N }
\def\phi{{\varphi}}
\def\qed{\hfill\hbox{\vrule\vbox to 2mm{\hrule width 2mm\vfill\hrule}\vrule}  \\}
\def\R{\mathbb R}
\def\ra{\to}
\def\SU{{\sf SU}}
\def\su{{\mathfrak{su}}}
\def\Tr{{\rm Tr}}
\def\tr{{\rm tr}}
\def\U{{\sf U}}
\def\UC{{\mathbb U}}
\def\u{\mathfrak{u}}
\def\Var{{\rm Var}}
\def\Z{\mathbb Z}
\def\t{\theta}
\begin{document}

\begin{frontmatter}

\title{Central limit theorem for the heat kernel measure on the unitary group}


\author[dma]{Thierry L\'evy}
\ead{levy@dma.ens.fr}

\author[orsay]{Myl\`ene Ma\"{\i}da \corref{cor1}}
\ead{mylene.maida@math.u-psud.fr}

\address[dma]{D\'epartement de Math\'ematiques, Ecole Normale Sup\'erieure, 45, rue d'Ulm, 
F-75230 Paris Cedex 05}

\address[orsay]{Laboratoire de Math\'ematiques,
Facult\'e des Sciences d'Orsay,
Universit\'e Paris-Sud,
F-91405 Orsay Cedex
}

\cortext[cor1]{Corresponding author. Postal address : Laboratoire de Math\'ematiques,
Facult\'e des Sciences d'Orsay,
Universit\'e Paris-Sud,
F-91405 Orsay Cedex. Tel: +33 1 69 15 77 95. Fax: +33 1 69 15 72 34}





\begin{abstract} We prove that for a finite collection of real-valued functions $f_{1},\ldots,f_{n}$ on the group of complex numbers of modulus $1$ which are derivable with Lipschitz continuous derivative, the distribution of $(\tr f_{1},\ldots,\tr f_{n})$ under the properly scaled heat kernel measure at a given time on the unitary group $\U(N)$ has Gaussian fluctuations as $N$ tends to infinity, with a covariance for which we give a formula and which is of order $N^{-1}$. In the limit where the time tends to infinity, we prove that this covariance converges to that obtained by P. Diaconis and S. Evans in a previous work on uniformly distributed unitary matrices. Finally, we discuss some combinatorial aspects of our results.
\end{abstract}


\begin{keyword}
 Central Limit Theorem \sep Random Matrices \sep Unitary Matrices 
\sep Heat Kernel \sep Free Probability

\MSC 15A52 \sep 60F05 \sep 58J65\sep 46L54
\end{keyword}

\end{frontmatter}





\section{Introduction}
In \cite{DE}, P. Diaconis and S. Evans studied the fluctuations of the trace of functions of a unitary matrix picked uniformly at random. Let us recall briefly their main result. If $U$ is a unitary matrix of size $N\geq 1$ and $f$ a real-valued function on the set $\UC$ of complex numbers of modulus $1$, then the eigenvalues  $\lambda_{1},\ldots,\lambda_{N}$ of $U$ belong to $\UC$ and $\tr f(U) =\frac{1}{N}\sum_{i=1}^{N} f(\lambda_{i}),$ where  $\tr$ is the normalized trace (so that $\tr(I_{N})=1$) and the matrix $f(U)$ is obtained from $U$ and $f$ by functional calculus. Using Weyl's integration formula and the rotational invariance of the Haar measure, it is easy to see that if $f:\UC\to \R$ is defined almost everywhere, is integrable
and has zero mean on $\UC$ then $\tr f(U)$ is defined for almost every $U$ and, seen as a random variable under the Haar measure, also has zero mean.


The function $f$ being fixed, $\tr f$ can be seen as a random variable on the unitary group $\U(N)$, endowed with the Haar measure, for all $N\geq 1$. Thus, the single function $f$ gives rise to a sequence of random variables indexed by the integer $N,$ which is their main object of study. In order to understand the behaviour of this sequence, a fundamental fact, which has been proved and used extensively in this context in \cite{DE}, is the following: for all $p,q\in \Z$, one has $\E[\tr(U^{p}) \overline{\tr(U^{q})}]=\delta_{p,q} N^{-2} \min(|p|,N)$. Using this, one can easily check that, if $f$ is square-integrable on $\UC$, then the variance of $\tr f$ converges to $0$ as $N$ tends to infinity. Moreover, if $f$ belongs to the Sobolev space $\H^{\frac{1}{2}}(\UC)$ (see Definition \ref{H1/2} below), then the series of the variances of $\tr f$ on $\U(N)$ converges, which gives a strong law of large numbers. 

The main result of \cite{DE} is that the fluctuations of $\tr f$ under the Haar measure are
asymptotically  Gaussian. More precisely, they have proved  that if $f$ belongs to $\H^{\frac{1}{2}}(\UC)$ and has zero mean on $\UC$, then $N \tr f$ converges in distribution to a centered Gaussian random variable with variance equal to the square of the $H^{\frac{1}{2}}$-norm of $f$ (see Theorem
\ref{tcl-haar} below for a precise statement). \\

In this paper, we consider the fluctuations of $\tr f$ when the unitary matrix is picked not under the Haar measure, but rather under the heat kernel measure at a certain time. The heat kernel measure at time $T$ is the distribution of $U_{N}(T)$, where $(U_{N}(t))_{t\geq 0}$ is the Brownian motion on $\U(N)$ issued from the identity matrix, that is, the Markov process whose generator is the Laplace-Beltrami operator associated to a certain Riemannian metric on $\U(N)$. The choice of a Riemannian metric that we make is explicited at the beginning of Section \ref{ubm-sec}. Apart from being one of the most natural stochastic processes with values in the unitary group, the Brownian motion arises for example in the context of two-dimensional $\U(N)$ Yang-Mills theory (\cite{KaKo,GW,GrMa}).  

Let $f:\UC\to \R$ be a function, as above. Once a time $T\geq 0$ is fixed, $\tr f$ is a random variable on $\U(N)$ for each $N\geq 1$, the unitary group being endowed with the heat kernel measure at time $T$. With our choice of Riemannian metric, it is known since the work of P. Biane \cite{Biane} that if $f$ is continuous, then $\tr f$ converges almost surely towards the integral of $f$ against a probability measure $\nu_{T}$ on $\UC$, which is characterized by the formula \eqref{nut} below. By this almost sure convergence, we mean that the expectations of these variables and the series of their variances converge.
For all $T>0$, the measure $\nu_{T}$ is absolutely continuous with respect to the uniform measure on $\UC$, with a density which unfortunately cannot be expressed in terms of usual functions. Its support is the full circle only for $T\geq 4$. For $T\in (0,4)$, its support is an arc of circle containing $1$, symmetric with respect to the horizontal axis, which grows continuously with $T$, and for the width of which a simple explicit formula exists. 
In fact, as $N$ tends to infinity, not only the distribution of the eigenvalues of $U_{N}(T)$ but the Brownian motion itself as a stochastic process converges in a certain sense towards a limiting object called the {\em free multiplicative Brownian motion}, which is defined in the language of free probability. The measure $\nu_{T}$ is the non-commutative distribution of this free process at time $T$ and can be considered as a multiplicative analogue of the Wigner semi-circle law. 

The main result of this paper is that for any function $f:\UC\to \R$ with Lipschitz continuous derivative, the fluctuations of $N \tr f$ are asymptotically Gaussian with variance $\sigma_{T}(f,f)$, where $\sigma_{T}$ is the quadratic form defined in Definition \ref{sigma}. This definition of $\sigma_{T}(f,f)$ involves three free multiplicative Brownian motions which are mutually free and the functional calculus associated to $f'$. It makes sense for functions of class $C^1$, or at best for absolutely continuous functions.  An alternative definition of $\sigma_{T}(f,f)$ is given by Definition \ref{newsigma} in terms of the Fourier coefficients of $f$ and the solution of an infinite triangular differential system (see Lemma \ref{freeito}). We prove that, when $T$ is large enough, this second definition makes sense for functions in the Sobolev space $H^{\frac{1}{2}}(\UC)$, which are not even necessarily continuous. 

Moreover, we prove that, as $T$ tends to infinity, $\sigma_{T}(f,f)$ converges towards the square of the $H^{\frac{1}{2}}$-norm of $f$. This convergence is consistent, at a heuristic level, with the result of P. Diaconis and S. Evans, since the Haar measure is the invariant measure of the Brownian motion, and its limiting distribution as time tends to infinity. 

For small values of $T$, the analysis seems much harder to perform. We have no expression of the covariance other than Definition \ref{sigma} and it seems plausible, considering the limiting support of the distribution of the eigenvalues of $U_{N}(T)$ and some puzzling numerical simulations (see Figure \ref{covariances} in Section \ref{diaco}), that the largest space of functions $f$ for which $N\tr f$ has Gaussian fluctuations might depend on $T$, say for $T\leq 4$. Unfortunately, we have no precise conjecture to offer in this respect.\\

The understanding of global fluctuations of random matrices has been widely developed in the literature using various techniques. By combinatorial methods applied to the computation of moments, Ya. Sinai and A. Soshnikov \cite{SS} derived a central limit theorem (CLT) for moments of Wigner matrices growing as $o(N^{2/3}).$ An important breakthrough is the work of  K. Johansson \cite{Joh} where he got, using techniques of orthogonal polynomials on the explicit joint density of eigenvalues, a CLT for Hermitian or real symmetric matrices whose entries have joint density $e^{N\tr V(M)}$, for a large class of potentials $V$. Recently, M. Shcherbina \cite{Shch} has been able to lower, in the symmetric case, the regularity of those functions for which the CLT holds. The study of Stieltjes transform for this purpose, initiated by L. Pastur and others \cite{pastur1, pastur2}, has recently given some striking results, among which one can cite the works of 
G. W. Anderson and O. Zeitouni \cite{AZ} or W. Hachem, P. Loubaton and J. Najim \cite{HLN}. Recently S. Chatterjee \cite{Chat} proposed ``a soft approach'' based on second order Poincar\'e inequalities.

The technique of proof that we have chosen is rather of the flavour of the one introduced 
in \cite{CD-fluc}. Therein, T. Cabanal-Duvillard proposed an approach based on matricial stochastic calculus to get a CLT for Hermitian and Wishart Brownian motions but also for several Gaussian Wigner matrices. In this direction we can also mention a CLT for band matrices obtained by A. Guionnet \cite{G-band}.

Some tools of free probability will play a key role in our analysis. The notion of second order freeness was developed in a series of papers \cite{sof1, sof2, sof3} in order to give a general framework to CLT's for large random matrices. In particular, the second paper \cite{sof2} of the series deals with unitary matrices and the results therein might be relevant to the problem under consideration
(see Section \ref{sec multitime} for more details).

Let us mention the work of F. Benaych-Georges \cite{Benaych}, which is closely related to  ours. He also considers  unitary matrices taken under the heat kernel measure, and he obtains a CLT for functions of the entries of these matrices, whereas we are rather considering functions of their empirical measure.\\

The paper is organized as follows : Section \ref{ubm-sec} is devoted to defining the Brownian motion on the unitary group, recalling from \cite{Biane} its  asymptotics, defining the proper covariance functional and stating our main result (Theorem \ref{main theo}). In Section \ref{proof-sec}, we present the structure of the proof of our main theorem by introducing a family of martingales (see Equation \eqref{mart}) that will be the main object of study. The proof will in fact boil down to proving the convergence of the bracket of these martingales (Section \ref{con bra}) and to controlling the variance of this bracket (Section \ref{con var}), relying on some technical results on the functional calculus on $\U(N)$ gathered in Section \ref{der fun cal}. In Section \ref{sec su}, we extend our result to other Brownian motions on the unitary group and to the Brownian motion on the special unitary group. In Section \ref{sec multitime}, we deal with the fluctuations of unitary Brownian motions stopped at different times. Section \ref{diaco} is devoted to the study of the covariance for large time, in connexion with the CLT for Haar unitaries \cite{DE}. Finally, in Section \ref{sec combi}, we discuss a combinatorial approach to some of our previous results and we obtain, via representation theoretic arguments, an explicit formula (Theorem \ref{covariance traces}) for mixed moments of the heat kernel on
$\SU(N)$.

\section{The Brownian motion on the unitary group}
\label{ubm-sec}
\subsection{The stochastic differential equation} Let $N\geq 1$ be an integer. We denote by $\U(N)$ the group of unitary $N\times N$ matrices and by $\u(N)$ its Lie algebra, which is the space of anti-Hermitian $N\times N$ matrices. We denote by $I_{N}$ the identity matrix. We will use systematically the following convention for traces: we denote the usual trace by $\Tr$ and the normalized trace by $\tr$, so that $\Tr(I_{N})=N$ and $\tr(I_{N})=1$.

Let us endow $\u(N)$ with the real scalar product $\langle X, Y \rangle_{\u(N)} = N \Tr(X^*Y)= - N \Tr(XY)$. We denote by $\|\cdot\|_{\u(N)}$ the corresponding norm.\\
The scalar  product $\langle \,\cdot\,, \,\cdot\, \rangle_{\u(N)}$ determines a Brownian motion with values in $\u(N)$, namely the
unique continuous Gaussian process $(K_{N}(t))_{t\geq 0}$ with values in $\u(N)$ such that
$$\forall s,t\geq 0, \forall A,B\in \u(N),\; \E[\langle A,K_{N}(s)\rangle_{\u(N)} \langle
B,K_{N}(t)\rangle_{\u(N)}]=\min(s,t) \langle A,B\rangle_{\u(N)}.$$

Equivalently, let $(B_{kl},C_{kl},D_{k})_{k,l\geq 1}$ be  independent standard real Brownian motions. Then $K_N(t)$ has the same distribution as the anti-Hermitian matrix whose upper-diagonal coefficients are the $\frac{1}{\sqrt{2N}}(B_{kl}(t)+ i C_{kl}(t))$ and whose diagonal coefficients are the $\frac{i}{\sqrt{N}} D_k(t)$.

The linear stochastic differential equation 
\begin{equation}\label{eds}
dU_{N}(t)= U_{N}(t) dK_{N}(t)-\frac{1}{2}U_{N}(t)dt
\end{equation}
admits a strong solution which is a process with values in $\M_{N}(\mathbb C)$. This process satisfies the identity $d(U_{N}U_{N}^*)(t)=0$, as one can check by using It\^{o}'s formula. Hence, this equation defines a Markov process on the unitary group $\U(N)$,
which we call the unitary Brownian motion. The generator of this Markov process can be described
as follows. Let $(X_{1},\ldots,X_{N^2})$ be an orthonormal basis of
$\u({N})$. Each element $X$ of $\u(N)$ can be identified with the left-invariant first-order
differential operator $\L_{X}$ on $\U(N)$ by setting, for all differentiable function $F:\U(N)\to 
\R$ and all $U\in \U(N)$,  
\begin{equation}\label{Lie}
(\L_{X}F)(U)=\frac{d}{dt}_{|t=0}F(U e^{tX}).
\end{equation}
The generator of the unitary Brownian motion is the second-order differential operator
$$\frac{1}{2}\Delta=\frac{1}{2}\sum_{k=1}^{N^2}\L_{X_{k}}^{2}.$$
This operator does not depend on the choice of the orthonormal basis of $\u(N)$. We denote the associated semi-group by $(P_{t})_{t\geq 0}$. From now on, we will always consider the Brownian motion issued from the identity matrix, so that $U_{N}(0)=I_{N}$.

The stochastic differential equation satisfied by $U_{N}$ can be translated into an It\^{o} formula, as follows. 

\begin{prop}  Let $F:\R\times \U(N)\to \R$ be a
function of class $C^2$. Then for all $t\geq 0$,
\begin{align}
F(t,U_{N}(t))=F(0,I_{N})&+\sum_{k=1}^{N^2} \int_{0}^{t} (\L_{X_{k}}F)(s,U_{N}(s))\; d\langle
X_{k},K_{N}\rangle_{\u(N)}(s) \nonumber\\
&+\int_{0}^{t} \left(\frac{1}{2}\Delta F+\partial_{t}F\right)(s,U_{N}(s))\; ds, \label{Ito}
\end{align}
and the processes $\{\langle X_{k},K_{N}\rangle_{\u(N)} :k\in\{1,\ldots,N^2\}\}$ are independent standard real
Brownian motions.
\end{prop}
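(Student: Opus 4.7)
The statement is a standard-looking It\^o formula, but cast in the geometric language of left-invariant vector fields on $\U(N)$. The plan is to reduce it to the classical It\^o formula applied to matrix entries, and then to perform an algebraic identification of the drift.

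First I would dispose of the last sentence: setting $\beta_{k}(t) := \langle X_{k}, K_{N}(t)\rangle_{\u(N)}$, the $\beta_{k}$'s are centered Gaussian processes by construction, and the defining covariance of $K_{N}$ gives $\E[\beta_{k}(s)\beta_{l}(t)] = \min(s,t) \langle X_{k},X_{l}\rangle_{\u(N)} = \min(s,t)\, \delta_{kl}$, so they are independent standard real Brownian motions. Expanding $K_{N}$ in the orthonormal basis then yields $K_{N}(t) = \sum_{k} X_{k}\beta_{k}(t)$, and the SDE \eqref{eds} rewrites as $dU_{N} = \sum_{k} U_{N}X_{k}\, d\beta_{k} - \frac{1}{2} U_{N}\, dt$. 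On matrix entries this is a classical It\^o diffusion in $\R^{2N^{2}}$, so the classical It\^o formula applied to the $C^{2}$ function $F$ produces three pieces: a $\partial_{t} F$ integral, a martingale part $\sum_{i,j} \int \partial_{ij} F\, d(U_{N})_{ij}$, and a quadratic variation term $\frac{1}{2}\sum_{i,j,k,l}\int \partial_{ij}\partial_{kl}F\, d\langle (U_{N})_{ij},(U_{N})_{kl}\rangle$.

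The rest is a translation step. From \eqref{Lie} one has the explicit formula $\L_{X} F(U) = \sum_{i,j} \partial_{ij}F(U)\, (UX)_{ij}$ for any matrix $X$, and differentiating once more gives $\L_{X}^{2} F(U) = \sum_{i,j,k,l} \partial_{ij}\partial_{kl} F(U)(UX)_{ij}(UX)_{kl} + \sum_{i,j}\partial_{ij}F(U)(UX^{2})_{ij}$. Using this, the martingale part immediately becomes $\sum_{k}\int \L_{X_{k}}F\, d\beta_{k}$. For the drift, the $-\frac{1}{2}U_{N}\, dt$ contribution gives $-\frac{1}{2} \L_{I_{N}} F\, dt$, and the bracket $d\langle (U_{N})_{ij},(U_{N})_{kl}\rangle = \sum_{m} (U_{N}X_{m})_{ij}(U_{N}X_{m})_{kl}\, dt$ gives $\frac{1}{2}\sum_{m}\L_{X_{m}}^{2}F\, dt$ plus a leftover first-order term equal to $\frac{1}{2}\L_{\sum_{m} X_{m}^{2}} F\, dt$.

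The whole formula then rests on the Casimir identity $\sum_{m} X_{m}^{2} = -I_{N}$, which one checks by a direct computation on any convenient orthonormal basis (say the basis of scaled elementary matrices used just above the statement to describe $K_{N}$); the basis-independence of the operator $\Delta$ ensures that the identity is canonical. With this, the leftover term exactly cancels the $-\frac{1}{2}\L_{I_{N}}F\, dt$ coming from the Stratonovich correction in \eqref{eds}, and the drift collapses to $(\frac{1}{2}\Delta F + \partial_{t} F)\, dt$, which matches \eqref{Ito}. The main obstacle is really only bookkeeping in this cancellation; everything else is routine stochastic calculus on a Lie group.
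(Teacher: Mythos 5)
Your proposal is correct and follows essentially the same route as the paper: you extend $F$ to a smooth function on $\R\times\M_{N}(\C)$, apply the classical It\^o formula to the matrix entries, express $\L_{X}$ and $\L_{X}^{2}$ in coordinates (your identities match the paper's, namely $\L_{X}^{2}-\L_{X^{2}}$ is the purely second-order operator), and close the argument with the Casimir identity $\sum_{m}X_{m}^{2}=-I_{N}$ to cancel the Stratonovich correction $-\tfrac{1}{2}U_{N}\,dt$ against the first-order part of the second-order term. One small caution on your bookkeeping sentence: the bracket contribution equals $\tfrac{1}{2}\Delta F - \tfrac{1}{2}\L_{C}F$ with $C=\sum_{m}X_{m}^{2}$, so the leftover is $-\tfrac{1}{2}\L_{C}F = +\tfrac{1}{2}\L_{I_{N}}F$, and it is this $+\tfrac{1}{2}\L_{I_{N}}F$ that cancels the $-\tfrac{1}{2}\L_{I_{N}}F$ from the drift of the SDE; as written your sign in front of the leftover is ambiguous, though your conclusion is the right one.
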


This result is classical in the framework of stochastic analysis on manifolds (see for example \cite{IkedaWatanabe}), but since our whole analysis relies on this formula and for the convenience of the reader, we offer a sketch of proof in this particular setting.

\begin{proof} For all  $a,b\in \{1,\ldots,N\}$, let $\epsilon_{ab}:\M_N(\C)\to \C$ denote the coordinate mapping which to a matrix $M$ associates the entry $M_{ab}$. Let also $\partial_{ab}$ denote the partial derivation with respect to the $ab$-entry. The definition of $\L_X$ given by \eqref{Lie} makes sense for any matrix $X$. One can check the following identities:
\begin{multline*}
\forall X\in \M_N(\C) , \; \L_X=\sum_{a,b,c=1}^{N} \epsilon_{ac} X_{cb} \partial_{ab} \\
\mbox{ and } \L_{X}^2-\L_{X^2}=\sum_{a,b,c,a',b',c'=1}^{N} \epsilon_{ac} X_{cb} \epsilon_{a'c'}X_{c'b'} \partial_{ab}\partial_{a'b'},
\end{multline*}
\[ \Delta = \L_{C} + \sum_{k=1}^{N^2} \sum_{a,b,c,a',b',c'=1}^{N} \epsilon_{ac} (X_k)_{cb} \epsilon_{a'c'}(X_k)_{c'b'} \partial_{ab}\partial_{a'b'},\]
where $C=\sum_{i=1}^{N^2} X_i^2$. Moreover, $C=-I_N$, regardless of the choice of the orthonormal basis $(X_1,\ldots,X_{N^2})$.

Any smooth function $F:\R\times \U(N)$ is the restriction of a smooth function defined on $\R\times \M_N(\C)$. Applying the usual It\^{o} formula to this extended function and using the identities above leads immediately to \eqref{Ito}.
\end{proof}

\subsection{The free multiplicative Brownian motion} We are interested in the large $N$ behaviour of the stochastic process $U_{N}$ issued from $I_{N}$.
P.~Biane has described in \cite{Biane} the limiting distribution of this process seen as a collection of
elements of the non-commutative probability space $(L^{\infty}\otimes \M_{N}(\C),\E\otimes \tr)$. We start by describing the limiting object. As a general reference on non-commutative probability and freeness, we recommend \cite{VoiculescuSF}.

\begin{defi} Let $(\A,\tau)$ be a (non-commutative) $*$-probability space. A collection of
unitaries $(u_{t})_{t\geq 0}$ in $\A$ is called a {\em free multiplicative Brownian motion} if the
following properties hold.

1. For all $0\leq t_{1}\leq \ldots \leq t_{n}$, the elements
$u_{t_{1}},u_{t_{2}}u_{t_{1}}^{*},\ldots,u_{t_{n}}u_{t_{n-1}}^{*}$ are free.

2. For all $0\leq s\leq t$, the element $u_{t}u_{s}^{*}$ has the same distribution as $u_{t-s}$.

3. For all $t\geq 0$, the distribution of $u_{t}$ is the probability measure $\nu_{t}$ on
$\UC=\{z\in \C : |z|=1\}$ characterized by the identity
\begin{equation}\label{nut}
\int_\UC \frac{1}{1-\frac{z}{z+1} e^{tz} e^{\frac{t}{2}}\xi }\; d\nu_t(\xi)=1+z,
\end{equation}
valid for $z$ in a neighbourhood of $0$.
\end{defi}

The following result was proved by P. Biane. The second assertion follows from the first by a general result of D. Voiculescu.

\begin{theo}\label{cv mb mbul} The collection $(U_{N}(t))_{t\geq 0}$ of non-commutative random variables converges in distribution, as $N$ tends to $+\infty$, towards a free multiplicative Brownian motion.

Moreover, if $U_{N}^{(1)},U_{N}^{(2)},\ldots,U_{N}^{(n)}$ are $n$ independent sequences of unitary Brownian motions, then the family $((U_{N}^{(1)}(t))_{t\geq 0},(U_{N}^{(2)}(t))_{t\geq 0},\ldots,(U_{N}^{(n)}(t))_{t\geq 0})$ converges in non-commutative distribution, as $N$ tends to infinity, towards \linebreak
$((u^{(1)}_{t})_{t\geq 0},(u^{(2)}_{t})_{t\geq 0},\ldots,(u^{(n)}_{t})_{t\geq 0})$ where $u^{(1)},\ldots,u^{(n)}$ are $n$  free multiplicative Brownian motions which are mutually free.
\end{theo}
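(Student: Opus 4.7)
The plan is to establish convergence of all noncommutative $*$-moments of the family $(U_N(t))_{t\geq 0}$ and identify the limit with the free multiplicative Brownian motion of the definition. Since the three defining properties (stationarity, freeness of increments, prescribed marginals $\nu_t$) characterize the process, the task splits into: the one-time marginal, the joint structure of a single copy, and the statement about several independent copies.

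For the one-time marginal, the natural tool is the It\^{o} formula \eqref{Ito} applied to $F(U) = \tr(U^n)$. A direct computation yields $\L_X \tr(U^n) = n\,\tr(XU^n)$, and iterating together with the Casimir identity $\sum_k X_k^2 = -I_N$ and its bilinear companion $\sum_k X_k A X_k = -\tr(A)\, I_N$ (both easily verified on an explicit orthonormal basis of $\u(N)$) gives
\begin{equation*}
\Delta \tr(U^n) = -n\,\tr(U^n) - n\sum_{k=1}^{n-1} \tr(U^k)\tr(U^{n-k}).
\end{equation*}
Taking expectations, the martingale part of \eqref{Ito} vanishes and one obtains an ODE for $m_n^N(t) := \E[\tr(U_N(t)^n)]$. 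A parallel It\^{o} computation on $|\tr(U_N(t)^n)|^2$ yields a variance bound of order $N^{-2}$, so the product expectations factorize in the limit, leaving the closed system
\begin{equation*}
\dot m_n(t) = -\tfrac{n}{2}\,m_n(t) - \tfrac{n}{2}\sum_{k=1}^{n-1} m_k(t)\,m_{n-k}(t), \qquad m_n(0) = 1.
\end{equation*}
Packaging the $m_n(t)$ into a generating function, the system translates into a first-order PDE that, solved by the method of characteristics, identifies this generating function with the inverse (in $z$) of $\frac{z}{z+1}e^{tz+t/2}$; this is exactly the implicit description of $\nu_t$ provided by \eqref{nut}, so $U_N(t) \to u_t$ in noncommutative distribution.

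For the joint structure of a single process, observe that for $0\leq s\leq t$ the right-increment $V_{s,t} := U_N(s)^* U_N(t)$ satisfies \eqref{eds} driven by $K_N(s+\cdot) - K_N(s)$ with initial value $I_N$, hence for $0\leq t_1\leq \ldots \leq t_n$ the matrices $V_k := U_N(t_{k-1})^* U_N(t_k)$ are mutually independent and each is distributed as $U_N(t_k - t_{k-1})$. The left-increments of the definition rewrite as $U_N(t_k) U_N(t_{k-1})^* = U_N(t_{k-1}) V_k U_N(t_{k-1})^*$. Since each $U_N(t)$ has a conjugation-invariant distribution (the heat kernel is a class function and $U_N(0)=I_N$), one is in the setup of Voiculescu's asymptotic freeness theorem for independent unitarily-invariant matrix ensembles: the family $(U_N(t_1), V_2, \ldots, V_n)$ becomes asymptotically free, and matching joint $*$-moments of the left-increments at the matrix level with those computed in the limiting free algebra verifies the defining freeness condition. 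The second assertion, about $n$ independent copies, follows from the same Voiculescu theorem applied to $n$ conjugation-invariant families simultaneously.

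The main obstacle is the first step: tracking the $O(N^{-2})$ corrections in the It\^{o} calculation, proving concentration of $\tr(U_N(t)^n)$, and analytically identifying the solution of the quadratic moment ODE with the implicit equation \eqref{nut} all require genuine work. The remaining two steps are comparatively soft, resting essentially on Voiculescu's asymptotic freeness theorem as a black box once the marginals and uniform moment bounds from step one are in hand.
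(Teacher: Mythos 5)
The paper does not prove Theorem \ref{cv mb mbul}: it states it as a result of P.~Biane \cite{Biane} (first assertion) and notes that the second assertion follows from the first by a theorem of D.~Voiculescu on asymptotic freeness. So there is no ``paper's own proof'' to compare against; what you have done is reconstruct, in outline, Biane's original argument, and your reconstruction is broadly sound.

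Your moment computation is correct. The identity $\L_{X}\tr(U^{n})=n\tr(U^{n}X)$ together with $\sum_{k}X_{k}AX_{k}=-\tr(A)I_{N}$ (which is exactly \eqref{split} of Lemma~\ref{trace} in coordinate form) gives $\Delta\tr(U^n)=-n\sum_{l=1}^{n}\tr(U^l)\tr(U^{n-l})$, and this yields your recursion once the variance of $\tr(U_{N}(t)^{k})$ is shown to be $O(N^{-2})$ so that $\E[\tr(U^{k})\tr(U^{n-k})]$ factorizes in the limit. Passing to the generating function and identifying the implicit relation \eqref{nut} is indeed the content of Biane's Lemma on the moments; this is the heaviest part, as you say. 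For the increment structure, your reduction to Voiculescu's asymptotic freeness theorem for independent, conjugation-invariant unitary ensembles (which applies because the heat kernel density is a class function, hence invariant under fixed unitary conjugation, and the operator norms are trivially bounded) is the right tool, and it is precisely this role that the paper attributes to Voiculescu.

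One point you pass over too quickly: you establish asymptotic freeness of the \emph{right} increments $U_{N}(t_{1}),\ U_{N}(t_{1})^{*}U_{N}(t_{2}),\ldots$, whereas the definition in the paper demands freeness of the \emph{left} increments $u_{t_{1}},\ u_{t_{2}}u_{t_{1}}^{*},\ldots$. Your remark that $U_{N}(t_{k})U_{N}(t_{k-1})^{*}=U_{N}(t_{k-1})V_{k}U_{N}(t_{k-1})^{*}$ is correct but does not by itself imply freeness of the conjugated family: if $a$ is free from $v$, $a$ and $ava^{*}$ need not be free unless $a$ has additional structure (e.g.\ is a Haar unitary). The clean way to close this gap is to observe that the process $(U_{N}(t)^{*})_{t\geq 0}$ has the same distribution as $(U_{N}(t))_{t\geq 0}$ (a consequence of the symmetry $K_{N}\mapsto -K_{N}$ of the driving noise combined with conjugation and transposition invariance), and that the left increments of $U_{N}$ are exactly the adjoints of the right increments of $U_{N}^{*}$; since a family and its adjoint family generate the same $*$-subalgebras, freeness of right increments for $U_{N}^{*}$ gives freeness of left increments for $U_{N}$. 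With that sentence added, your outline matches what one would find in \cite{Biane}.
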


\subsection{Statement of the Central Limit Theorem} Recall that $\UC$ denotes the group of complex numbers of modulus $1$. Let $f:\UC\to \R$ be a function. Then, by the functional calculus, $f$ induces a function, still denoted by $f$, from $\U(N)$ to ${\mathbb M}_N(\C)$. Moreover, for all unitary matrix $U$, the matrix $f(U)$ is Hermitian.

We endow $\UC$ with the usual length distance, that is, the distance such that $d(e^{i\alpha},e^{i\beta})=|\alpha-\beta|$ for all $\alpha,\beta\in \R$ such that $|\alpha - \beta| \leq \pi$. Accordingly, we define the Lipschitz norm of a function $f:\UC \to \R$ as follows:
$$\|f\|_{\rm Lip}=\sup_{z,w\in \UC, z\neq w} \frac{|f(z)-f(w)|}{d(z,w)}.$$
Note that if $f$ is Lipschitz continuous and $z,w$ belong to $\UC$, then the following inequalities hold: $|f(z)-f(w)|\leq \| f \|_{\rm Lip} d(z,w)\leq \frac{\pi}{2} \|f\|_{\rm Lip} |z-w|$.

By the derivative of a differentiable function $f:\UC\to \R$, we mean the function $f':\UC\to \R$
defined by
$$\forall z\in \UC, \; f'(z)=\lim_{h\to 0} \frac{f(z e^{ih})-f(z)}{h}.$$
We denote by $L^1(\UC)$ the space of integrable functions on $\UC$, with respect to the Lebesgue measure. We denote by $C^1(\UC)$ the space of continuously differentiable functions and by $C^{1,1}(\UC)$ the subspace of $C^{1}(\UC)$ consisting of those functions whose derivative is Lipschitz continuous. We define a family of bilinear forms on $C^1(\UC)$ as follows.

\begin{defi}\label{sigma} Let $(\A,\tau)$ be a $C^*$-probability space which carries three free multiplicative Brownian motions $u,v,w$ which are mutually free. Let $T\geq 0$ be a real number. Let $f,g:\UC\to \R$ be two functions of $C^1(\UC)$. For all $s\in [0,T]$, we set
$\sigma_{T,s}(f,g)=\tau(f'(u_{s}v_{T-s})g'(u_{s}w_{T-s}))$. Then, we define
$$\sigma_{T}(f,g)=\int_{0}^{T} \sigma_{T,s}(f,g)\; ds = \int_{0}^{T} \tau(f'(u_{s}v_{T-s})g'(u_{s}w_{T-s}))\; ds.$$
\end{defi}

\begin{lem} For all $T\geq 0$, $\sigma_{T}$ is a symmetric non-negative bilinear form on $C^1(\UC)$.
\end{lem}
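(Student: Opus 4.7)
The plan is to verify bilinearity, symmetry, and then non-negativity; the first two are formal and the third is the substantive claim.

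Bilinearity is immediate, because $f\mapsto f'$ is linear, functional calculus $g\mapsto g(x)$ is linear in $g$, and both $\tau$ and integration over $s\in[0,T]$ are linear. For symmetry, observe that the joint $*$-distribution of $(u,v,w)$ is invariant under the exchange of $v$ and $w$, since these are two free copies of a free multiplicative Brownian motion, both free from $u$. Hence $\sigma_{T,s}(f,g)$ is unchanged when $v_{T-s}$ and $w_{T-s}$ are swapped inside the trace, and the trace property $\tau(XY)=\tau(YX)$ then identifies the result with $\sigma_{T,s}(g,f)$.

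For non-negativity, fix $s\in[0,T]$ and set $A=f'(u_sv_{T-s})$ and $B=f'(u_sw_{T-s})$. Since $f$ is real-valued, $f'$ is real-valued; as $u_sv_{T-s}$ and $u_sw_{T-s}$ are unitary, both $A$ and $B$ are self-adjoint. Let $\mathcal{M}_u=W^*(u_s)\subset\A$ and let $E_u\colon\A\to\mathcal{M}_u$ be the trace-preserving conditional expectation. Mutual freeness of the three processes $u,v,w$ implies that $v_{T-s}$ is free from $W^*(u_s,w_{T-s})$ and that $w_{T-s}$ is free from $W^*(u_s,v_{T-s})$; consequently the algebras $W^*(u_s,v_{T-s})$ and $W^*(u_s,w_{T-s})$ are free with amalgamation over $\mathcal{M}_u$. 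A standard centering argument then yields
\[
E_u(AB)=E_u(A)\,E_u(B).
\]
Moreover, $(u_s,v_{T-s})$ and $(u_s,w_{T-s})$ have the same joint $*$-distribution (both $v_{T-s}$ and $w_{T-s}$ have law $\nu_{T-s}$ and are free from $u_s$), so $E_u(A)=E_u(B)=:h_s\in\mathcal{M}_u$. Positivity of $E_u$ together with $A=A^*$ gives $h_s=h_s^*$. Since $\tau\circ E_u=\tau$, we obtain
\[
\sigma_{T,s}(f,f)=\tau(AB)=\tau\bigl(E_u(A)\,E_u(B)\bigr)=\tau(h_s^2)\geq 0,
\]
and integrating over $s$ yields $\sigma_T(f,f)\geq 0$.

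The only step with any content is the identity $E_u(AB)=E_u(A)E_u(B)$ coming from amalgamated freeness; everything else is bookkeeping. Should one wish to avoid this language, the same identity can be proved directly on trigonometric polynomials in $u_sv_{T-s}$ and $u_sw_{T-s}$ by centering each $v_{T-s}$- and $w_{T-s}$-letter with respect to $E_u$ and then invoking the definition of freeness of the three processes to kill every cross term, before extending by norm approximation to $A$ and $B$.
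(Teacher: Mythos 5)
Your proof is correct and uses essentially the same mechanism as the paper: pass to the trace-preserving conditional expectation onto the algebra generated by $u$, use freeness (with amalgamation) to split $\tau(AB)$ into $\tau(E_u(A)E_u(B))$, note that $E_u(A)=E_u(B)$ because $(u,v)$ and $(u,w)$ are equidistributed, and conclude that the integrand is $\tau(h_s^2)\geq 0$ with $h_s=h_s^*$. The paper packages this by realizing $(u,v,w)$ inside the explicit free product $\mathcal A_u*\mathcal A_v*\mathcal A_w$ and invoking the ``partial traces'' $\tau_v,\tau_w$ there, which are exactly your conditional expectations onto the $u$-factor applied to elements depending only on $(u,v)$ (resp.\ $(u,w)$); so the two arguments differ only in notation and in how explicitly the amalgamated-freeness factorization is spelled out.
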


\begin{proof} The symmetry of $\sigma_{T}$ comes from the fact that the triples $(u,v,w)$ and $(u,w,v)$ have the same distribution. In order to prove the non-negativity, let us realize $(u,v,w)$ on the free product of three non-commutative probability spaces. So, let $({\mathcal A}_{u},\tau_{u})$, $({\mathcal A}_{v},\tau_{v})$ and $({\mathcal A}_{w},\tau_{w})$ be three non-commutative probability spaces which carry respectively $u$, $v$ and $w$. We consider their free product, so we define $\mathcal A=\mathcal A_{u}*\mathcal A_{v}*\mathcal A_{w}$ and $\tau=\tau_{u}*\tau_{v}*\tau_{w}$. We also use the notation $\tau_{u},\tau_{v},\tau_{w}$ for the partial traces on $\mathcal A$. Then
\begin{multline*}
\sigma_{T}(f,f)=\int_{0}^{T}\tau_{u}(\tau_{v}(f'(u_{s}v_{T-s}))\tau_{w}(f'(u_{s}w_{T-s})))\; ds\\
=\int_{0}^{T}\tau_{u}(\tau_{v}(f'(u_{s}v_{T-s}))^{2})\; ds \geq 0,
\end{multline*}
the positivity coming from the fact that $f'(u_s v_{T-s})$ is self-adjoint.
\end{proof}

We will use the notation $\sigma_{T}(f)=\sigma_{T}(f,f)$. Let us state our main result. 

\begin{theo}\label{main theo} Let $T\geq 0$ be a real number. Let $n\geq 1$ be an integer. Let $f_{1},\ldots,f_{n}:\UC\to \R$ be $n$ functions of $C^{1,1}(\UC)$. Let us define a $n\times n$ real non-negative symmetric matrix by setting  $\Sigma_T(f_1,\ldots,f_n)=(\sigma_{T}(f_{i},f_{j}))_{i,j\in\{1,\ldots,n\}}$. Then, as $N$ tends to infinity, the following convergence of random vectors in $\R^{n}$ holds in distribution:
\begin{equation}\label{main}
N\left(\tr f_{i}(U_{N}(T)) - \E\left[\tr f_{i}(U_{N}(T))\right]\right)_{i\in\{1,\ldots,n\}} \build{\longrightarrow}_{N\to\infty}^{(d)} {\mathcal N}(0, \Sigma_T(f_1,\ldots,f_n)).
\end{equation}
\end{theo}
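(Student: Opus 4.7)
The plan is to use the martingale central limit theorem (Rebolledo's theorem for continuous local martingales) applied to an $\R^n$-valued martingale whose final increment is precisely the vector appearing on the left-hand side of \eqref{main}. For each $i\in\{1,\ldots,n\}$, I introduce
$$M_N^{(i)}(t)=N\,\bigl(P_{T-t}(\tr f_i)\bigr)\!\bigl(U_N(t)\bigr),\qquad t\in[0,T].$$
Because the semigroup $(P_t)_{t\ge 0}$ is generated by $\tfrac{1}{2}\Delta$, the function $F_i(t,U)=P_{T-t}(\tr f_i)(U)$ satisfies $\partial_tF_i+\tfrac12\Delta F_i=0$, so the It\^o formula \eqref{Ito} makes $M_N^{(i)}$ a continuous martingale with $M_N^{(i)}(0)=N\,\E[\tr f_i(U_N(T))]$ and $M_N^{(i)}(T)=N\tr f_i(U_N(T))$. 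The theorem then reduces to showing that the bracket matrix $\bigl(\langle M_N^{(i)},M_N^{(j)}\rangle_T\bigr)_{i,j}$ converges in probability to the deterministic matrix $\Sigma_T(f_1,\ldots,f_n)$.

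From \eqref{Ito}, $\tfrac{d}{dt}\langle M_N^{(i)},M_N^{(j)}\rangle_t=N^2\sum_{k}(\L_{X_k}F_i)(\L_{X_k}F_j)(t,U_N(t))$. The first key computation, transparent on trigonometric polynomials and then extended via the functional calculus estimates of Section \ref{der fun cal}, is the identity $\L_X[\tr f(UV)]=-i\tr\!\bigl(X\,f'(VU)\bigr)$ for $X\in\u(N)$ and $U,V\in\U(N)$. Combining this with the basis-independent Casimir identity $\sum_\alpha(X_\alpha)_{ij}(X_\alpha)_{kl}=-\tfrac1N\delta_{il}\delta_{jk}$, and writing $P_{T-t}(\tr f_i)(U)=\E[\tr f_i(U\,V_N^{(i)}(T-t))]$ through two independent copies $V_N,V_N'$ of the unitary Brownian motion (also independent of $U_N$), one obtains
$$\frac{d}{dt}\langle M_N^{(i)},M_N^{(j)}\rangle_t=\E_{V,V'}\!\bigl[\tr\!\bigl(f_i'(V_N(T-t)U_N(t))\,f_j'(V_N'(T-t)U_N(t))\bigr)\bigr].$$
Integrating this on $[0,T]$, Theorem \ref{cv mb mbul} shows that the triple $(U_N,V_N,V_N')$ converges jointly to three mutually free multiplicative Brownian motions $(u,v,w)$; then continuity of $f'_i,f'_j$ and cyclicity of the trace deliver
$$\E\bigl[\langle M_N^{(i)},M_N^{(j)}\rangle_T\bigr]\xrightarrow[N\to\infty]{}\int_0^T\tau\bigl(f_i'(u_tv_{T-t})\,f_j'(u_tw_{T-t})\bigr)\,dt=\sigma_T(f_i,f_j).$$

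What remains, and what I expect to be the main technical obstacle, is to upgrade this convergence in expectation to convergence in probability, namely to prove $\Var\bigl(\langle M_N^{(i)},M_N^{(j)}\rangle_T\bigr)\to 0$. Since $\langle M_N^{(i)},M_N^{(j)}\rangle_T$ is itself a time integral of a trace depending on three Brownian motions, I would treat it as the terminal value of a secondary martingale and bound its quadratic variation, a procedure which produces second derivatives of $f_i$ and $f_j$ along Brownian paths. The Lipschitz assumption on $f'_i$ and $f'_j$ enters exactly here: it supplies the operator-norm estimates on differences $f'(U)-f'(U')$ needed to control those second-order contributions uniformly in $N$, which is where the functional calculus machinery of Section \ref{der fun cal} plays its role. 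Once the bracket matrix is known to converge in probability to $\Sigma_T(f_1,\ldots,f_n)$, Rebolledo's theorem applied to the continuous $\R^n$-valued martingale $(M_N^{(1)},\ldots,M_N^{(n)})$ — the continuity makes the Lindeberg condition on jumps trivial — yields the announced Gaussian limit \eqref{main} with the asserted covariance.
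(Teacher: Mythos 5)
Your proposal matches the paper's strategy exactly: the same martingale $M_N^{(i)}(t)=N\,(P_{T-t}(\tr f_i))(U_N(t))$, the same identification of its bracket through the Lie derivative formula $\L_Y(\tr f)(U)=-i\tr(f'(U)Y)$ and the Casimir identity, the same reduction to convergence of the expectation of the bracket integrand (via Theorem \ref{cv mb mbul} applied to three independent unitary Brownian motions) together with an $O(N^{-2})$ bound on its variance, with the $C^{1,1}$ hypothesis entering the variance estimate exactly as you anticipate. The only cosmetic difference is the final step, where the paper avoids invoking Rebolledo's theorem and instead proves the needed martingale CLT by hand, applying It\^o's formula to the exponential functional $R_N(t)$ and sending $N\to\infty$ in the resulting characteristic-function identity.
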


\section{Structure of the proof}
\label{proof-sec}
For $T=0$, the result is straightforward. Let us choose once for all a real $T> 0$. In order to study the left-hand side of (\ref{main}), we write each component of this random vector as the difference between the final and the initial value of a martingale. To do this, let $(\F_{N,t})_{t\geq 0}$ denote the filtration generated by the unitary Brownian motion $U_{N}$.
To each function $f$ of $L^1(\UC)$ we associate a real-valued martingale $(M_{N}^f(t))_{t\in [0,T]}$ by setting
\begin{equation}
\label{mart}
M_{N}^f(t)=\E[\tr f(U_{N}(T))|\F_{N,t}].
\end{equation}

The left-hand side of (\ref{main}) is simply $N\left(M_{N}^{f_{i}}(T)-M_{N}^{f_{i}}(0)\right)_{i\in\{1,\ldots,n\}}$ and we are going to study the quadratic variations and covariations of the martingales $M_N^{f_{i}}$. In order to state the main technical results, let us introduce some notation.

Recall that the gradient of a differentiable function $F:\U(N)\to \C$ is the vector field on $\U(N)$ defined by $\nabla F=\sum_{k=1}^{N^2}(\L_{X_{k}}F)X_{k}$, where $(X_{1},\ldots,X_{N^{2}})$ is an orthonormal basis of $\u(N)$. To each pair of functions $f,g\in L^1(\UC)$ we associate a function $E_{N}^{f,g}$ on $[0,T)\times \U(N)$ by setting 
\[E_{N}^{f,g}(s,U)=N^2 \langle \nabla (P_{T-s}(\tr f))(U),\nabla (P_{T-s}(\tr g))(U) \rangle_{\u(N)}.\]
Let us check that this function is well-defined. By the Weyl integration formula, the fact that $f$ is integrable on $\UC$ implies that $\tr f$ is an integrable function on $\U(N)$. Hence, for all $s\in [0,T)$, $P_{T-s}(\tr f)$ is a function of class $C^{\infty}$ on $\U(N)$ and $E_N^{f,g}$ is well defined.

\begin{prop} \label{main technical} Consider $f,g\in L^1(\UC)$. With the notation introduced above, the following properties hold.
\begin{enumerate}
\item For all $t\in [0,T]$, the quadratic covariation of the martingales $NM^{f}_{N}$ and $NM^{g}_{N}$ is given by
$$\langle N M^{f}_{N}, N M^{g}_{N}\rangle_{t}=\int_{0}^{t}E_{N}^{f,g}(s,U_{N}(s))\; ds.$$
\item Assume that $f$ and $g$ are Lipschitz continuous. Then for all $s\in [0,T)$ and all $U\in \U(N)$, $| E_{N}^{f,g}(s,U)|\leq \left(\Vert f \Vert_{\rm Lip}+\Vert g \Vert_{\rm Lip}\right)^2$. Moreover, if $f$ and $g$ belong to $C^{1}(\UC)$, then the following convergence holds:
$$\E[E_{N}^{f,g}(s,U_{N}(s))]\build{\longrightarrow}_{N\to\infty}^{}\sigma_{T,s}(f,g).$$
\item Assume that $f$ and $g$ belong to $C^{1,1}(\UC)$. Then the following estimate holds:
$$\sup_{s\in [0,T)} \Var(E_{N}^{f,g}(s,U_{N}(s)))=O(N^{-2}).$$
\end{enumerate}
\end{prop}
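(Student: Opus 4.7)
For assertion (i), the strategy is to apply It\^{o}'s formula (\ref{Ito}) to the space-time function $F_f(t,U):=(P_{T-t}\tr f)(U)$. Because the heat semigroup satisfies $(\partial_{t}+\tfrac12\Delta)F_f=0$, the drift term in the It\^{o} expansion vanishes and one obtains
\[M_{N}^{f}(t)-M_{N}^{f}(0)=\sum_{k=1}^{N^{2}}\int_{0}^{t}\L_{X_{k}}(P_{T-s}\tr f)(U_{N}(s))\,d\langle X_{k},K_{N}\rangle_{\u(N)}(s).\]
The processes $\langle X_{k},K_{N}\rangle_{\u(N)}$ being independent standard real Brownian motions, the quadratic covariation of $NM_{N}^{f}$ and $NM_{N}^{g}$ is $N^{2}\int_{0}^{t}\sum_{k}\L_{X_{k}}(P_{T-s}\tr f)\L_{X_{k}}(P_{T-s}\tr g)(U_{N}(s))\,ds$, which by the very definition of $\nabla$ and $E_{N}^{f,g}$ equals $\int_{0}^{t}E_{N}^{f,g}(s,U_{N}(s))\,ds$.

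For assertion (ii), the heart of the argument is to obtain an explicit formula for the gradient $\nabla(P_{T-s}\tr f)(U)$. A direct spectral-perturbation computation, easily checked first on trigonometric polynomials and extended by density, gives $\L_{X}(\tr f)(U)=-i\tr(f'(U)X)$ for every $f\in C^{1}(\UC)$. Since the scalar product $\langle\cdot,\cdot\rangle_{\u(N)}$ is $\mathrm{Ad}$-invariant, the Laplacian $\Delta$ is central in the universal enveloping algebra, so $P_{T-s}$ commutes with each $\L_{X}$; combined with the probabilistic identity $(P_{t}F)(U)=\E[F(UV_{t})]$, where $V$ is an independent unitary Brownian motion from $I_{N}$, this yields
\[\L_{X}(P_{T-s}\tr f)(U)=-i\tr\bigl(N_{f}(U)X\bigr),\qquad N_{f}(U):=\E_{V}\bigl[f'(UV_{T-s})\bigr].\]
The standard Casimir identity $\sum_{k}(X_{k})_{ab}(X_{k})_{cd}=-\tfrac{1}{N}\delta_{ad}\delta_{bc}$, valid on any orthonormal basis of $(\u(N),\langle\cdot,\cdot\rangle_{\u(N)})$, then gives $\nabla(P_{T-s}\tr f)(U)=iN_{f}(U)/N^{2}$ and hence
\[E_{N}^{f,g}(s,U)=\tr\bigl(N_{f}(U)N_{g}(U)\bigr)=\E_{V,W}\,\tr\bigl(f'(UV_{T-s})\,g'(UW_{T-s})\bigr),\]
with $V$ and $W$ two independent unitary Brownian motions from $I_{N}$. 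The uniform bound is immediate from $|\tr(AB)|\leq\|A\|_{\rm op}\|B\|_{\rm op}$ together with $\|f'(UV)\|_{\rm op}\leq\|f'\|_{\infty}\leq\|f\|_{\rm Lip}$, which yields $|E_{N}^{f,g}|\leq\|f\|_{\rm Lip}\|g\|_{\rm Lip}\leq(\|f\|_{\rm Lip}+\|g\|_{\rm Lip})^{2}$. Taking $U=U_{N}(s)$ turns $(U_{N}(s),V_{T-s},W_{T-s})$ into three independent unitary Brownian motions, and Theorem \ref{cv mb mbul} furnishes their joint convergence in non-commutative distribution to $(u_{s},v_{T-s},w_{T-s})$ with $u,v,w$ mutually free; since $f'$ and $g'$ are continuous on the compact $\UC$, continuity of functional calculus gives
\[\E[E_{N}^{f,g}(s,U_{N}(s))]\longrightarrow\tau\bigl(f'(u_{s}v_{T-s})g'(u_{s}w_{T-s})\bigr)=\sigma_{T,s}(f,g).\]

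For assertion (iii), set $G_{s}(U):=E_{N}^{f,g}(s,U)$. A second application of It\^{o}'s formula, this time to $(r,U)\mapsto(P_{s-r}G_{s})(U)$ and exactly parallel to step (i), produces the Clark-Ocone-type identity
\[\Var\bigl(G_{s}(U_{N}(s))\bigr)=\E\int_{0}^{s}\|\nabla(P_{s-r}G_{s})\|_{\u(N)}^{2}(U_{N}(r))\,dr.\]
The bi-invariant metric on $\U(N)$ has non-negative Ricci curvature, so the Bakry-\'Emery gradient estimate $\|\nabla P_{t}G_{s}\|^{2}\leq P_{t}(\|\nabla G_{s}\|^{2})$ applies; combined with the Markov property this reduces the task to the uniform bound $\sup_{s\in[0,T),\,U\in\U(N)}\|\nabla G_{s}\|_{\u(N)}^{2}=O(N^{-2})$. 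This last estimate is the main obstacle of the proof: differentiating $U\mapsto\tr(f'(UV)g'(UW))$ produces Fr\'echet-type divided-difference derivatives of $f'$ and $g'$, and the $C^{1,1}$ hypothesis enters precisely here, since the Lipschitz continuity of $f'$ and $g'$ is what controls those divided differences in operator norm by $\|f'\|_{\rm Lip}$ and $\|g'\|_{\rm Lip}$. The estimates needed to make this rigorous are the content of the functional-calculus results announced for Section \ref{der fun cal}; once they furnish $\|\nabla G_{s}\|_{\u(N)}=O(N^{-1})$ uniformly in $s$, the desired $O(N^{-2})$ variance bound follows.
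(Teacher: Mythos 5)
Your proposal follows the paper's own route for all three assertions. For (i) you apply It\^{o} to $(t,U)\mapsto (P_{T-t}\tr f)(U)$, exactly as the paper does. For (ii) you first derive the explicit identity $\nabla(P_{T-s}\tr f)(U)=\tfrac{i}{N^2}\E_V[f'(UV_{T-s})]$ and read both the bound and the limit directly from the resulting representation $E_N^{f,g}(s,U)=\E_{V,W}\tr\bigl(f'(UV_{T-s})g'(UW_{T-s})\bigr)$; the paper instead reaches the uniform bound via Jensen's inequality and only afterwards produces the same double-average formula for the convergence. These are equivalent, and both rest on the commutation $\L_X P_t=P_t\L_X$ and the trace identity $\sum_k \tr(AX_k)\tr(BX_k)=-\tfrac{1}{N^2}\tr(AB)$.

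For (iii) you invoke a Bakry-\'Emery gradient bound $\|\nabla P_tG\|^2\le P_t\|\nabla G\|^2$ justified by non-negative Ricci curvature. This is valid for the bi-invariant metric, but the paper uses the more elementary observation that right-translation by a unitary is an isometry, giving $\|P_tF\|_{\rm Lip}\le\|F\|_{\rm Lip}$ and hence $\Var[F(U_N(T))]\le T\|F\|_{\rm Lip}^2$ directly; curvature is not needed. More importantly, you defer the decisive estimate $\sup_{s,U}\|\nabla E_N^{f,g}(s,\cdot)\|_{\u(N)}=O(N^{-1})$ to unstated functional-calculus results. You correctly identify that differentiating $U\mapsto\tr(f'(UV)g'(UW))$ brings in divided differences of $f'$ and $g'$, and that the $C^{1,1}$ hypothesis controls these, but you do not actually prove the estimate. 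The paper fills this in (Propositions \ref{lip f f} and \ref{conc s}) by bounding the operator norm of ${\sf D}f'(L_{VU},R_{VU})$ on $(\M_N(\C),N\Tr(\cdot^*\cdot))$ by $\tfrac{\pi}{2}\|f''\|_\infty$, using its normality, and then applying Cauchy-Schwarz; this is the one step your outline leaves open.
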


Let us show that these results imply Theorem \ref{main theo}.

\begin{proof}[Proof of Theorem \ref{main theo}] For all $N\geq 1$, define a $\R^n$-valued martingale $Q_N=(Q^{1}_{N},\ldots,Q^{n}_{N})$ by setting $Q_N(t)=N \left(M_{N}^{f_{j}}(t)-M_{N}^{f_{j}}(0)\right)_{j\in \{1,\ldots,n\}}$. It is a martingale indexed by $[0,T]$, issued from $0$ and with the same bracket as \linebreak
$N\left(M_N^{f_{j}}\right)_{j\in \{1,\ldots,n\}}$. For all $\xi=(\xi_{1},\ldots,\xi_{n}) \in \R^n$ and all $t\in [0,T]$, set
$$R_{N}(t)=\exp\left(i \sum_{j=1}^{n} \xi_{j} Q_N^{j}(t) + \frac{1}{2}\sum_{j,k=1}^{n} \xi_{j}\xi_{k}  \int_0^t \sigma_{T,s}(f_{j},f_{k})\; ds \right).$$
It\^{o}'s formula yields
\begin{eqnarray*}
\E[R_{N}(t)]=1+\frac{1}{2}\sum_{j,k=1}^{n} \xi_{j}\xi_{k} \E  \int_0^t R_{N}(s) \left(\sigma_{T,s}(f_{j},f_{k})-E_{N}^{f_{j},f_{k}}(s,U_{N}(s))\right)\; ds.\\
\end{eqnarray*}
Thus, 
\begin{multline*}
\left| \E [R_{N}(t)- 1] \right| \leq 
\frac{n \Vert \xi\Vert^2}{2} e^{\frac{n T \Vert \xi \Vert^2}{2}  \build{\max}_{j=1\ldots n}^{} \Vert f'_{j} \Vert_\infty^2}\\ \max_{j,k=1\ldots n} \E\int_{0}^{t} \left| \sigma_{T,s}(f_{j},f_{k})-E_{N}^{f_{j},f_{k}}(s,U_{N}(s))\right|\; ds.
\end{multline*}
For fixed $j$ and $k$, the last integral is smaller than
\begin{multline*}
\int_{0}^{t} \left| \sigma_{T,s}(f_{j},f_{k}) - \E\left[E_{N}^{f_{j},f_{k}}(s,U_{N})(s)\right]\right| \; ds \\+ \E\int_{0}^{t} \left| E_{N}^{f_{j},f_{k}}(s,U_{N})(s)  - \E\left[E_{N}^{f_{j},f_{k}}(s,U_{N})(s)\right]\right| \; ds.
\end{multline*}
By the second part of Proposition \ref{main technical}, and by the dominated convergence theorem, the first integral tends to $0$ as $N$ tends to infinity. The square of the second integral is smaller than $t\int_{0}^{t} \Var(E_{N}^{f_{j},f_{k}}(s,U_{N}(s)))\; ds$, which, thanks to the third part of Proposition \ref{main technical} and by dominated convergence again, tends also to $0$. 
Finally, we have proved that 
$$\forall \xi \in \R^{n}, \; \lim_{N\to \infty} \E \left[e^{i \sum_{j=1}^{n}\xi_{j} Q_N^{j}(t)}\right]=\exp \left(- \frac{1}{2} \sum_{j,k=1}^{n} \xi_{j} \xi_{k} \int_0^t \sigma_{T,s}(f_{j},f_{k})\; ds\right),$$
which, for $t=T$, yields the expected result.
\end{proof}

In Section \ref{der fun cal}, we collect some technical results that we use in Sections \ref{con bra} and \ref{con var} to prove Proposition \ref{main technical}.

\section{Regularity of the functional calculus} \label{der fun cal}
In this section, we relate the regularity of a function $f:\UC \to \R$ to the regularity of the functional calculus mapping $f:\U(N)\to \M_{N}(\C)$ and the function $\tr f:\U(N)\to \R$. We start with a result which, logically speaking, is not necessary for our exposition, but which is the simplest instance of a crucial phenomenon.

\subsection{Lipschitz norms}
The group $\U(N)$ becomes a metric space when it is endowed with the Riemannian distance, denoted by $d$, associated to the Riemannian metric induced by the scalar product $\langle \cdot , \cdot \rangle_{\u(N)}$ on $\u(N)$. We denote by $\Vert F \Vert_{\rm Lip}$ the corresponding Lipschitz norm of a function $F:\U(N)\to \R$, that is,
$$\Vert F \Vert_{\rm Lip} = \sup\left\{ \frac{|F(U)-F(V)|}{d(U,V)} : U,V \in \U(N), U\neq V \right\}.$$

As a reference for the notions of Riemannian geometry that we use, we recommend \cite{DoCarmo}.

\begin{prop}\label{norme lip} Let $f:\UC \to \R$ be a Lipschitz continuous function. Then $\tr f:\U(N)\to \R$ is also Lipschitz continuous and
$$\| \tr f \|_{\rm Lip} = \frac{1}{N} \| f \|_{\rm Lip}.$$
\end{prop}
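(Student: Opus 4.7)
The plan is to prove the two inequalities $\|\tr f\|_{\rm Lip}\leq N^{-1}\|f\|_{\rm Lip}$ and $\|\tr f\|_{\rm Lip}\geq N^{-1}\|f\|_{\rm Lip}$ separately. For the upper bound I would first pass through the derivative of $\tr f$ in Lie directions via functional calculus, then reduce general Lipschitz $f$ to the smooth case by convolution. For the lower bound, scalar unitaries turn out to be extremal and reduce the problem to a direct computation on the circle.

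Assume first $f\in C^1(\UC)$. The key intermediate claim is that for every $U\in \U(N)$ and $X\in \u(N)$,
\[ \L_X(\tr f)(U)=-\tfrac{i}{N}\Tr(f'(U)X). \]
By $\mathrm{Ad}$-invariance of $\Tr$ and of $\langle \cdot, \cdot\rangle_{\u(N)}$, one may assume $U$ is diagonal after replacing $X$ by a unitary conjugate. For diagonal $U=\mathrm{diag}(\lambda_1,\ldots,\lambda_N)$ with simple spectrum, first-order perturbation theory gives $\lambda_k(Ue^{tX})=\lambda_k(1+tX_{kk})+O(t^2)$; writing $X_{kk}=i\alpha_k$, the very definition of $f'$ (as a derivative along the group $\UC$) yields $\frac{d}{dt}\big|_{t=0} f(\lambda_k(Ue^{tX}))=\alpha_k f'(\lambda_k)$, and summing over $k$ gives the formula. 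Both sides being continuous in $U$ by the continuous functional calculus, the identity extends from the dense open set of unitaries with simple spectrum to all of $\U(N)$.

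Next I bound this derivative. Since $\|f'\|_\infty\leq \|f\|_{\rm Lip}$ (immediate from the definition of $f'$), one has $\|f'(U)\|_{HS}^2=\sum_k f'(\lambda_k)^2\leq N\|f\|_{\rm Lip}^2$; combined with $\|X\|_{HS}=N^{-1/2}\|X\|_{\u(N)}$ and Cauchy--Schwarz, this gives
\[ |\L_X(\tr f)(U)|\leq \tfrac{1}{N}\|f'(U)\|_{HS}\|X\|_{HS}\leq \tfrac{\|f\|_{\rm Lip}}{N}\|X\|_{\u(N)}. \]
Integrating along a minimizing geodesic from $U$ to $V$ yields $|\tr f(V)-\tr f(U)|\leq \|f\|_{\rm Lip}\,d(U,V)/N$. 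For a general Lipschitz $f$, convolve on the group $\UC$ with a smooth approximate identity to produce $f_\epsilon\in C^\infty(\UC)$ with $\|f_\epsilon\|_{\rm Lip}\leq \|f\|_{\rm Lip}$ and $f_\epsilon\to f$ uniformly; the continuous functional calculus gives $\tr f_\epsilon\to \tr f$ uniformly on $\U(N)$, and a uniform limit of $L$-Lipschitz functions is $L$-Lipschitz, yielding the upper bound in full generality.

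For the reverse inequality, test the Lipschitz quotient on the scalar unitaries $U=e^{i\theta}I_N$ and $V=e^{i\phi}I_N$ with $|\theta-\phi|\leq \pi$. Then $\tr f(U)-\tr f(V)=f(e^{i\theta})-f(e^{i\phi})$, and the path $t\mapsto e^{i((1-t)\theta+t\phi)}I_N$ is a geodesic with initial velocity $i(\phi-\theta)I_N\in \u(N)$, of $\u(N)$-norm equal to $N|\theta-\phi|$; it is minimizing because its initial velocity has all eigenvalues in $i[-\pi,\pi]$, so $d(U,V)=N|\theta-\phi|$. Taking the supremum over $\theta,\phi$ gives $\|\tr f\|_{\rm Lip}\geq \|f\|_{\rm Lip}/N$. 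The main obstacle is the gradient computation: the reduction to diagonal unitaries with simple spectrum must be combined with the continuity argument to handle repeated eigenvalues, and one must carefully track the $\sqrt N$ factor relating the Hilbert--Schmidt norm to $\|\cdot\|_{\u(N)}$, which is precisely what produces the $1/N$ (rather than $1/\sqrt N$) scaling in the final Lipschitz bound.
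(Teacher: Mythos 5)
Your proof is correct, but it takes a genuinely different route from the paper's. The paper proves the upper bound elementarily, with no recourse to derivatives: it first establishes a geometric lemma (Lemma~\ref{codiag}) that two unitaries can be simultaneously conjugated to diagonal form without increasing the Riemannian distance, then computes directly with eigenvalue angles, using $\sum_j |\beta_j-\alpha_j|\le \sqrt{N\sum_j(\beta_j-\alpha_j)^2}=d(U',V')\le d(U,V)$. Your argument instead goes through the gradient identity $\L_X(\tr f)(U)=-i\tr(f'(U)X)$ --- which is precisely the content of Propositions~\ref{derivee f} and~\ref{ff'} proved later in the paper --- bounds the gradient via Cauchy--Schwarz in the Hilbert--Schmidt norm, and integrates along a minimizing geodesic, with a convolution step to pass from $C^1$ to Lipschitz. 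Both routes yield the same constant; yours is the more ``standard'' differential-geometric one (and in some sense anticipates the machinery the paper actually uses for the main theorem), while the paper's is shorter and handles Lipschitz $f$ directly without smoothing. Your lower-bound argument via scalar unitaries is the same as the paper's.

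One step deserves a word of caution. The passage ``Both sides being continuous in $U$, the identity extends from the dense open set of unitaries with simple spectrum to all of $\U(N)$'' is a little quicker than it should be: continuity of the two sides does not by itself show that the derivative $\L_X(\tr f)$ exists at a unitary with repeated eigenvalues. What one actually needs is that, on a dense open set, $\tr f$ has a gradient that extends continuously to $\U(N)$, from which a mean-value / $C^1$-extension argument (or the fact that the singular set has codimension at least $3$, so geodesics can be perturbed to avoid it) gives differentiability everywhere with the extended gradient. This is standard and fixable, but as written it is a gap; the paper avoids it by proving the differentiation formula directly at every unitary in Proposition~\ref{derivee f} (the simultaneous-diagonalization of $V'$ and $Z$ there does the work your continuity appeal is meant to do).
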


Note that this result can be compared to Lemma 1.2 in \cite{GZconc}, where it was a key point
towards the concentration results for Wigner and Wishart random matrices.
In order to prove this proposition, we use the following lemma.

\begin{lem}\label{codiag} Let $U$ and $V$ be two elements of $\U(N)$. Then there exists $A,B \in \U(N)$ such that $AUA^{-1}$ and $BVB^{-1}$ are diagonal  and $d(AUA^{-1},BVB^{-1})\leq d(U,V)$.
\end{lem}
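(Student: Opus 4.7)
The plan is to use bi-invariance of the metric to reduce the lemma to an optimization problem on the compact conjugation orbit of $V$, and then to carry out a first-variation argument at a minimizer, showing that a closest point to a fixed diagonalization of $U$ may always be taken diagonal.

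First I would observe that the scalar product $\langle X,Y\rangle_{\u(N)} = -N\Tr(XY)$ is $\mathrm{Ad}$-invariant, hence the induced Riemannian metric on $\U(N)$ is bi-invariant, and conjugation by any unitary matrix is an isometry (being a composition of a left- and a right-translation). Fixing $A\in\U(N)$ with $D_U:=AUA^{-1}$ diagonal, one has $d(U,V) = d(D_U, AVA^{-1})$, so it suffices to exhibit some diagonal $D_V$ in the orbit $\mathcal O_V := \{gVg^{-1} : g\in\U(N)\}$ with $d(D_U,D_V)\le d(D_U, AVA^{-1})$. The orbit $\mathcal O_V$ being compact, the continuous function $W\mapsto d(D_U,W)^2$ attains its minimum on $\mathcal O_V$ at some $V^*$, and one is reduced to showing that a minimizer may be chosen diagonal.

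To prove this, I would write $V^* = D_U\,e^Y$ with $Y\in\u(N)$ of minimal norm and compute the first variation of the squared distance on $\mathcal O_V$. The key feature of a bi-invariant metric on a compact Lie group is that the Riemannian exponential at any base point is the left-translated Lie exponential; this identifies the gradient at $V^*$ of $W\mapsto d(D_U,W)^2$ with $2V^*Y$. Tangent vectors to $\mathcal O_V$ at $V^*$ have the form $[\eta,V^*]$ for $\eta\in\u(N)$, and writing down the orthogonality of the gradient to these directions and using cyclicity of the trace reduces the criticality condition to $V^*YV^{*-1}=Y$. Since $Y$ obviously commutes with $e^Y$, the relation $V^*=D_Ue^Y$ then forces $[Y,D_U]=0$, so the anti-Hermitian matrix $Y$ is block-diagonal with respect to the eigenspace decomposition of $D_U$.

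To conclude I would choose a block-diagonal unitary $P$, acting on each eigenspace of $D_U$, that diagonalizes $Y$ block by block. Then $P$ commutes with $D_U$, so $PV^*P^{-1} = D_U\, e^{PYP^{-1}}$ is a product of diagonal matrices and hence is itself diagonal. By bi-invariance $d(D_U, PV^*P^{-1}) = d(P^{-1}D_UP, V^*) = d(D_U, V^*)$, so this new point still lies in $\mathcal O_V$ and still minimizes the distance. Taking $B\in\U(N)$ with $BVB^{-1} = PV^*P^{-1}$ yields a diagonalization of $V$ with $d(AUA^{-1},BVB^{-1})\le d(U,V)$, which is the desired inequality.

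The main obstacle I anticipate is the first-variation computation, which truly uses the bi-invariant identification of the Riemannian and Lie exponentials; without that, the gradient of the squared distance is much less transparent. A minor technicality is the cut-locus issue: if $V^{*-1}D_U$ has $-1$ as an eigenvalue, a minimal-norm logarithm $Y$ need not be unique, so the gradient formula above has to be interpreted carefully. I would sidestep this by a density/approximation argument, perturbing $V$ off the cut locus, applying the above to obtain matrices $B_\varepsilon$, and extracting a convergent subsequence by compactness of $\U(N)$; continuity of the Riemannian distance then transports the inequality to the limit.
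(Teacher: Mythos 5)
Your proof is correct and follows essentially the same route as the paper's: minimize the distance over the compact conjugacy class, derive from criticality at the minimizer that the logarithm of the geodesic segment commutes with the relevant matrix, and conclude simultaneous diagonalizability. The cosmetic differences are (a) you fix a diagonalization $D_U=AUA^{-1}$ of $U$ first and then move $V$, whereas the paper keeps $U$ fixed and moves $V$ directly, which means the paper gets simultaneous diagonalizability of $U$ and $V'$ in one step, while you need the final conjugation by a block-diagonal $P$ commuting with $D_U$; and (b) you phrase the criticality condition via the gradient of the squared distance, whereas the paper uses the first variation of arclength to conclude that $\dot\gamma(0)$ is orthogonal to $T_{V'}\mathcal O$. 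The second point is worth flagging: the first-variation-of-arclength argument the paper uses is valid even when the minimizer lies in the cut locus of the base point (one only needs the existence of a minimizing geodesic, not differentiability of $d^2$), so the cut-locus complication you anticipated and propose to handle by perturbation can be avoided entirely by arguing directly with the first variation of length along the minimizing geodesic. Your perturbation fallback is conceptually reasonable but is left informal (it is not immediate that a small perturbation of $V$ pushes the new minimizer off the cut locus of $D_U$), so switching to the arclength formulation is both cleaner and closer to what the paper actually does.
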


\begin{proof} Let $\mathcal O$ be the conjugacy class of $V$. It is a compact submanifold of $\U(N)$. Let $V'$ be a point of $\mathcal O$ which minimizes the distance to $U$. Let $\gamma:[0,1]\to \U(N)$ be a minimizing geodesic path from $V'$ to $U$ parametrized at constant speed. It is thus of the form $\gamma(t)=V'e^{tZ}$ for some $Z\in \u(N)$. Since $V'$ minimizes the distance to $U$, the vector $\dot\gamma(0)$ is orthogonal to the tangent space $T_{V'}\mathcal O$. This space $T_{V'}\mathcal O$, identified with a subspace of $\u(N)$ by a left translation, is the range of the linear mapping ${\rm Ad}(V'^{-1})-{\rm Id}$. Hence, $Z$ belongs to the kernel of the adjoint linear mapping, that is, to the kernel of ${\rm Ad}(V')-{\rm Id}$. In other words, $V'ZV'^{-1}=Z$. It follows that $Z$ and $V'$ can be simultaneously diagonalized, in an orthonormal basis, and the same is true for $V'$ and $V'e^{Z}=U$. Finally, $V'$ and $U$ are conjugated by a same unitary matrix to two diagonal unitary matrices. The result follows easily from the fact that translation are isometries on $\U(N)$.
\end{proof}

\begin{proof}[Proof of Proposition \ref{norme lip}] Let $f:\UC \to \R$ be Lipschitz continuous. Consider $U$ and $V$ in $\U(N)$. Thanks to Lemma \ref{codiag}, let us choose $U'$ and $V'$ which are both diagonal, conjugated respectively to $U$ and $V$, and such that $d(U',V')\leq d(U,V)$. Let us write $U'={\rm diag}(e^{i\alpha_{1}},\ldots,e^{i\alpha_{N}})$ and $V'={\rm diag}(e^{i\beta_{1}},\ldots,e^{i\beta_{N}})$ in such a way that $|\beta_{j}-\alpha_{j}|\leq \pi$ for all $j\in \{1,\ldots,N\}$. Let us compute $d(U',V')$. It is equal to $d(I_{N},U'^{-1}V')$, hence to
\begin{multline*}
d(I_N,e^{i {\rm diag}(\beta_{1}-\alpha_{1},\ldots,\beta_{N}-\alpha_{N})})=\Vert i {\rm diag}(\beta_{1}-\alpha_{1},\ldots,\beta_{N}-\alpha_{N}) \Vert_{\u(N)}\\ =\sqrt{N \sum_{j=1}^{N} (\beta_{j}-\alpha_{j})^{2}}.
\end{multline*}
It follows that $d(U,V)\geq \sum_{j=1}^{N}|\beta_{j}-\alpha_{j}|$. On the other hand,
\begin{multline*}
|\tr f(V)-\tr f(U)| \leq \frac{1}{N} \sum_{j=1}^{N}|f(e^{i\beta_{j}})-f(e^{i \alpha_{j}})|\leq \frac{1}{N} \|f\|_{\rm Lip} \sum_{j=1}^{N}|\beta_{j}-\alpha_{j}|\\ \leq \frac{1}{N} \|f\|_{\rm Lip} d(U,V).
\end{multline*}
This proves the inequality $\| \tr f \|_{\rm Lip}\leq \frac{1}{N}\| f \|_{\rm Lip}$. By choosing $\alpha,\beta$ such that $|f(e^{i\beta})-f(e^{i\alpha})|$ is close to $\| f \|_{\rm Lip} |\beta-\alpha|$ and by considering $U=e^{i\alpha}I_{N}, V=e^{i\beta}I_{N}$, one verifies that the opposite inequality holds.
\end{proof}

Let us make a short heuristic comment on this result. The scalar product which we have chosen on $\u(N)$ corresponds to a metric structure on $\U(N)$ which gives this group the diameter $d(I_{N},-I_{N})=\| i{\rm diag}(\pi,\ldots,\pi)\|_{\u(N)}=N\pi$, of the order of $N$. The function $f:\UC\to \R$ being fixed, the variations of the function $\tr f:\U(N)\to \R$ are of the same order of magnitude as those of $f$ but occur on a space $N$ times as large. This makes the equality that we have juste proved plausible.

In the same order of ideas, note that the distance to the origin at time $T$ of a linear Brownian motion in a Euclidean space of large dimension $d$ is, by the law of large numbers, of the order of $\sqrt{dT}$. Assuming that the Brownian motion on the unitary group behaves in a comparable way, and considering the fact that the dimension of $\U(N)$ is $N^{2}$, this indicates that the Brownian motion $U_{N}(T)$ might be at a distance of order $N\sqrt{T}$ of $I_{N}$, thus a fraction of the diameter of $\U(N)$ which does not depend on $N$. This gives an intuitive justification for the choice of the normalization.

\subsection{First derivatives}
We are now going to prove that the functional calculus induced by $f$ is differentiable when $f$ is differentiable, and to compute its differential. For this, we introduce some notation. Let $f:\UC\to \C$ be a differentiable function. Let us define a function ${\sf D}f:\UC \times \UC \to \C$ by setting
$$\forall z,w\in \UC, \; {\sf D}f(z,w)=\left\{\begin{array}{cl} \frac{f(z)-f(w)}{z-w} & \mbox{
if } z\neq w,\\
-\frac{i}{z} f'(z)& \mbox{ if } z=w.\end{array}\right.$$
The function ${\sf D}f$ is symmetric and, if $f$ is $C^1(\UC)$, it is continuous and bounded by $\frac{\pi}{2}\Vert f' \Vert_{\infty}$. Note that ${\sf D} f$ takes its values in $\C$ even if $f$ is real-valued.

If the function $f$  is only Lipschitz continuous, then it is differentiable with bounded differential outside a negligible subset of $\UC$, and the definition of ${\sf D}f$ still makes sense outside the corresponding  negligible subset of the diagonal of $\UC\times \UC$. Moreover, outside this subset, the inequality $|{\sf D}f(z,w)|\leq \frac{\pi}{2} \| f'\|_\infty$ holds.

If $U$ is a unitary matrix, we denote by $L_{U}$ and $R_{U}$ the linear operators on ${\mathbb M}_{N}(\C)$ of left and right multiplication by $U$ respectively. These operators commute and they are normal with respect to the scalar product $\langle A,B\rangle = N \Tr(A^*B)$ on ${\mathbb M}_{N}(\C)$. In fact, $L_U^*=L_{U^{-1}}$ and $R_U^*=R_{U^{-1}}$. Hence, if $g$ is a function on $\UC\times \UC$, then $g(L_{U},R_{U})$ is a well-defined endomorphism of $\M_{N}(\C)$. Even when $f$ is only Lipschitz continuous, ${\sf D}f(L_U,R_U)$ is well-defined for almost all $U\in \U(N)$.

Let us define a special orthonormal basis of $\u(N)$. We use the notation $(E_{jk})_{j,k\in \{1,\ldots,N\}}$ for the canonical basis of $\M_N(\C)$. For all $j,k$ with $1\leq j <k \leq N$, set
$X_{jk}=\frac{1}{\sqrt{2N}} (E_{jk}-E_{kj})$ and
$Y_{jk}=\frac{i}{\sqrt{2N}} (E_{jk}+E_{kj})$. For all $j\in \{1,\ldots,N\}$, set
$H_j=\frac{i}{\sqrt{N}} E_{jj}$. These matrices form an orthonormal basis of $\u(N)$.

\begin{prop}\label{derivee f} Let $f:\UC \to \C$ be a differentiable function. Let $U$ be an element of $\U(N)$.
Let $X$ be an element of $\u(N)$. Then
\begin{equation}\label{der df}
\frac{d}{dt}_{|t=0} f(U e^{t X})= \left({\sf D}f(L_{U},R_{U})\right)(UX).
\end{equation}
In particular, when $U$ is a diagonal matrix with
diagonal coefficients $(u_1,\ldots,u_N)$, the following equalities hold.
\begin{enumerate}
\item For all $j\in \{1,\ldots,N\}$, $\frac{d}{dt}_{|t=0} f\left(U e^{t H_j}\right)={\sf D}f(u_{j},u_{j})UH_{j}$.
\item For all $j,k\in \{1,\ldots,N\}$ with $j<k$, $\frac{d}{dt}_{|t=0} f\left(U e^{t X_{jk}}\right)={\sf D}f(u_{j},u_{k})UX_{jk}$.
\item For all $j,k\in \{1,\ldots,N\}$ with $j<k$, $\frac{d}{dt}_{|t=0} f\left(U e^{t Y_{jk}}\right)={\sf D}f(u_{j},u_{k})UY_{jk}$.
\end{enumerate}
If $f$ is only Lipschitz continuous, then the same conclusions hold for almost all $U\in \U(N)$ (with respect to Haar measure).
\end{prop}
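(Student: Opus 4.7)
\emph{Plan.} I would establish the main identity \eqref{der df} first in the diagonal case by a spectral perturbation argument, then extend to an arbitrary $U\in\U(N)$ by conjugation, and only at the end read off the three special cases (i), (ii), (iii). The Lipschitz version then follows by an almost-everywhere argument.

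Suppose first that $U=\mathrm{diag}(u_1,\dots,u_N)$ has simple spectrum. The smooth curve $F(t)=Ue^{tX}$ satisfies $F(0)=U$ and $F'(0)=UX$, and smooth eigenvalue perturbation theory lets one write $F(t)=V(t)D(t)V(t)^*$ with $V(t)\in\U(N)$, $D(t)$ diagonal unitary, $V(0)=I_N$ and $D(0)=U$. Differentiating this decomposition at $t=0$ and splitting the identity $V'(0)U+D'(0)-UV'(0)=UX$ into diagonal and off-diagonal parts yields $D'(0)_{jj}=u_jX_{jj}$ and $V'(0)_{jk}=-u_jX_{jk}/(u_j-u_k)$ for $j\ne k$; writing $D(t)_{jj}=u_je^{i\theta_j(t)}$ with $\theta_j(0)=0$ gives in turn $\theta_j'(0)=-iX_{jj}$. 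Now $f(F(t))=V(t)f(D(t))V(t)^*$, and its derivative at $t=0$ has off-diagonal $(j,k)$ entry $V'(0)_{jk}(f(u_k)-f(u_j))=u_jX_{jk}\cdot\frac{f(u_j)-f(u_k)}{u_j-u_k}={\sf D}f(u_j,u_k)(UX)_{jk}$, and diagonal $(j,j)$ entry $\frac{d}{dt}_{|t=0}f(u_je^{i\theta_j(t)})=f'(u_j)\theta_j'(0)=-iX_{jj}f'(u_j)={\sf D}f(u_j,u_j)(UX)_{jj}$. Hence the $(j,k)$ entry of the derivative equals ${\sf D}f(u_j,u_k)(UX)_{jk}$, which is precisely the $(j,k)$ entry of ${\sf D}f(L_U,R_U)(UX)$ since $L_U$ and $R_U$ act on matrix entries by multiplication by $u_j$ (on row $j$) and $u_k$ (on column $k$) respectively. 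This establishes \eqref{der df} when $U$ has simple spectrum; both sides being continuous in $U$ (thanks to the continuity of ${\sf D}f$ on $\UC\times\UC$), the identity extends to all diagonal $U$ by density.

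For general $U=WDW^*$ with $W\in\U(N)$ and $D$ diagonal, one has $f(Ue^{tX})=Wf(De^{tW^*XW})W^*$, whose derivative at $t=0$ is $W\cdot{\sf D}f(L_D,R_D)(DW^*XW)\cdot W^*$ by the diagonal case. Symmetrically, the conjugation identities $L_U=\mathrm{Ad}_W\circ L_D\circ\mathrm{Ad}_{W^*}$ and $R_U=\mathrm{Ad}_W\circ R_D\circ\mathrm{Ad}_{W^*}$ give ${\sf D}f(L_U,R_U)(UX)=W\cdot{\sf D}f(L_D,R_D)(DW^*XW)\cdot W^*$ as well, proving \eqref{der df} in full generality. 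The three special cases are then immediate specializations: for $X=H_j$ one uses $UH_j=u_jH_j$; for $X=X_{jk}$ or $Y_{jk}$ the matrix $UX$ is supported in the $(j,k)$ off-diagonal block, and the entrywise action of ${\sf D}f(L_U,R_U)$ produces the factor ${\sf D}f(u_j,u_k)$. When $f$ is merely Lipschitz continuous, $f'$ exists off a negligible set $Z\subset\UC$, so ${\sf D}f$ is defined off $\{(z,z):z\in Z\}\subset\UC\times\UC$ with $|{\sf D}f|\le(\pi/2)\|f'\|_\infty$ there, and the spectral computation goes through for any diagonal $U$ whose eigenvalues all avoid $Z$---an almost-everywhere condition via Weyl integration---and then by conjugation for a.e.\ $U\in\U(N)$.

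The main technical delicacy is the continuity step from simple to degenerate spectrum, where $V'(0)_{jk}$ becomes singular as $u_j-u_k\to 0$ but is compensated by the vanishing of $f(u_j)-f(u_k)$; this is handled by a standard Kato-type perturbation argument, or alternatively by reducing case by case to a $2\times 2$ block on which one can explicitly diagonalize. The rest is direct computation plus routine linear algebra.
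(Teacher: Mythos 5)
Your approach is essentially the same as the paper's --- spectral perturbation along $t\mapsto Ue^{tX}$ for diagonal $U$, then conjugation-invariance for general $U$ --- but you reverse the order of deductions: you establish \eqref{der df} directly for arbitrary $X$ and then specialize to (i)--(iii), whereas the paper first computes (i)--(iii) along the basis curves $Ue^{tH_j}$, $Ue^{tX_{jk}}$, $Ue^{tY_{jk}}$ and deduces \eqref{der df} for diagonal $U$ by noting that each $E_{jk}$ is a joint eigenvector of $L_U$ and $R_U$. Your perturbation formulae ($D'(0)_{jj}=u_jX_{jj}$, $V'(0)_{jk}=-u_jX_{jk}/(u_j-u_k)$, and the evaluation of $\frac{d}{dt}_{|t=0}V(t)f(D(t))V(t)^*$) are correct, as is the conjugation step and the reading-off of (i)--(iii) from the entrywise action of $L_U$ and $R_U$.

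The one soft spot is the density argument from simple to degenerate spectrum. Continuity of ${\sf D}f$ on $\UC\times\UC$ holds only when $f\in C^1$, while the proposition assumes mere differentiability; and even for $f\in C^1$, continuity of the right-hand side together with agreement on a dense set of $U$ does not by itself yield existence of the left-hand derivative at the missing $U$ --- one needs locally uniform control of the difference quotients, i.e.\ the (nontrivial) fact that $U\mapsto f(U)$ is $C^1$ as a map $\U(N)\to\M_N(\C)$. You flag this delicacy yourself and offer the explicit $2\times 2$ block reduction as a fallback; that is the right repair, and it is in effect what the paper does: its curves $Ue^{tX_{jk}}$ and $Ue^{tY_{jk}}$ perturb only a $2\times 2$ block, so the degenerate case $u_j=u_k$ becomes a finite explicit computation requiring no limiting argument. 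If you make that alternative primary rather than parenthetical, your proof is equivalent to the paper's in both substance and rigor.
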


\begin{proof} We will give the proof under the assumption that $f$ is differentiable. The extension to the Lipschitz continuous case is straightforward (We have to take into account that in this case the differential operators 
involved are only defined for almost all $U$ with respect to Haar measure).\\

 Let us start by proving the part of the statement which concerns a diagonal matrix $U$.

1. Since $U e^{t H_j}$ is diagonal,
this assertion is proved by an easy direct computation.

2. This case is less trivial. Let us assume that $u_j\neq u_k$. Then for small $t$, there is a unique pair of continuous functions $(u_j(t),u_k(t))$ such that the spectrum of $U e^{tX_{jk}}$ is deduced from that of $U$ by replacing $u_j$ and $u_k$ respectively by $u_j(t)$ and $u_k(t)$. The functions $u_j$ and $u_k$ are in fact smooth and they satisfy $u_j'(0)=u_k'(0)=0$, an equality which can be phrased by saying that the right multiplication by $e^{tX_{jk}}$ does not affect the spectrum of $U$ at the first order. 

Let $D(t)$ be the diagonal matrix obtained from $U$  by replacing $u_j$ and $u_k$ by $u_j(t)$ and $u_k(t)$ respectively. By diagonalizing $U e^{tX_{jk}}$ for small $t$, one can find a unitary matrix $P(t)$ which depends smoothly on $t$, such that $P(0)=I_N$, such that the only non-zero off-diagonal terms of $P(t)$ are $P(t)_{jk}$ and $P(t)_{kj}$, and finally such that
\begin{equation}\label{diagonalise}
U e^{tX_{jk}}=P(t) D(t) P(t)^{-1}.
\end{equation}
By differentiating with respect to $t$ at $t=0$, one finds
$$UX_{jk}=[P'(0),U],$$
from which one deduces that $P'(0)_{jk}=\frac{1}{\sqrt{2N}}\frac{u_j}{u_k-u_j}$ and $P'(0)_{kj}=\frac{1}{\sqrt{2N}}\frac{u_k}{u_k-u_j}$.
By applying $f$ to both sides of (\ref{diagonalise}) and then differentiating again with respect to $t$ at $t=0$, we find
$$\frac{d}{dt}_{|t=0} f\left(U e^{t X_{jk}}\right)=[P'(0),f(U)].$$
Knowing the off-diagonal terms of $P'(0)$ is enough to compute this bracket and we find the expected result.
The case where $u_j=u_k$ is left to the reader, as well as the third assertion.

Let us now turn to the first part of the statement, where no assumption is made on $U$. Let us
first prove that (\ref{der df}) is true when $U$ is a diagonal matrix with diagonal coefficients
$(u_{1},\ldots,u_{N})$.

In this case, for all $j,k\in \{1,\ldots,N\}$, the matrix $E_{jk}$ is an eigenvector for $L_{U}$ and $R_{U}$, with the eigenvalues
$u_{j}$ and $u_{k}$ respectively. Hence, by definition of ${\sf D}f$, $E_{jk}$ is an eigenvector of
${\sf D}f(L_{U},R_{U})$ with the eigenvalue ${\sf D}f(u_{j},u_{k})$.  The validity of (\ref{der df}) in this case follows, because
$UH_{j}$ (resp. $UX_{jk}$, $UY_{jk}$) has the same vanishing entries as $H_{j}$ (resp. $X_{jk}$, $Y_{jk}$).

Let us finally prove that (\ref{der df}) holds for any unitary matrix. Consider $U\in \U(N)$. Choose
$P,D\in \U(N)$ such that $D$ is diagonal and $U=PDP^{-1}$. Set $Y=P^{-1}XP$. Then $U e^{tX}=P D
e^{tY} P^{-1}$. The result follows now easily.
\end{proof}

Before we apply the last result in order to compute the differential of $\tr f$, let us state a classical yet very useful lemma, of which a version can be found in \cite{SenguptaMF}.

\begin{lem}\label{trace} Let $(X_k)_{k\in \{1,\ldots,N^2\}}$ be a orthonormal basis of $\u(N)$. Let
$A,B$ be elements of $\M_N(\C)$. Then the following equalities hold:
\begin{equation}
\label{coal}
\sum_{k=1}^{N^2} \tr(AX_k) \tr(BX_k) =-\frac{1}{N^2} \tr(AB),
\end{equation}
\begin{equation}\label{split}
\sum_{k=1}^{N^2} \tr(AX_k BX_k)=-\tr(A) \tr(B).
\end{equation}
\end{lem}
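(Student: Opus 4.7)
The plan is to reduce both identities to a single tensor identity in $\M_N(\C)\otimes \M_N(\C)$, namely
\[
\sum_{k=1}^{N^2} X_k \otimes X_k \;=\; -\frac{1}{N}\sum_{j,l=1}^{N} E_{jl} \otimes E_{lj}, \qquad (\ast)
\]
and then to recover each of \eqref{coal} and \eqref{split} by an appropriate bilinear contraction. The left-hand side of $(\ast)$ is manifestly independent of the chosen orthonormal basis of $\u(N)$, so I may work with the explicit basis $(X_{jk}, Y_{jk}, H_j)$ introduced just above the statement.

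Substituting $X_{jk}=\frac{1}{\sqrt{2N}}(E_{jk}-E_{kj})$ and $Y_{jk}=\frac{i}{\sqrt{2N}}(E_{jk}+E_{kj})$ and expanding, the ``parallel'' pieces $E_{jk}\otimes E_{jk}$ and $E_{kj}\otimes E_{kj}$ cancel between $X_{jk}\otimes X_{jk}$ and $Y_{jk}\otimes Y_{jk}$, leaving
\[
X_{jk}\otimes X_{jk}+Y_{jk}\otimes Y_{jk} \;=\; -\frac{1}{N}\bigl(E_{jk}\otimes E_{kj}+E_{kj}\otimes E_{jk}\bigr), \qquad j<k,
\]
while the diagonal contribution is $H_j\otimes H_j = -\frac{1}{N}E_{jj}\otimes E_{jj}$. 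Summing these contributions over all basis elements reconstructs exactly the right-hand side of $(\ast)$.

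For \eqref{coal}, I would apply the bilinear map $(C,D)\mapsto \Tr(AC)\,\Tr(BD)$ to the two sides of $(\ast)$. The left becomes $\sum_k \Tr(AX_k)\Tr(BX_k)$; the right becomes $-\frac{1}{N}\sum_{j,l}\Tr(AE_{jl})\Tr(BE_{lj}) = -\frac{1}{N}\sum_{j,l}A_{lj}B_{jl} = -\frac{1}{N}\Tr(AB)$. Dividing both sides by $N^2$ converts $\Tr$ into $\tr$ and yields the claimed $-\frac{1}{N^2}\tr(AB)$.

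For \eqref{split}, I would apply instead the ``entangled'' contraction $(C,D)\mapsto \Tr(ACBD)$. The left becomes $\sum_k\Tr(AX_kBX_k)$. On the right, the elementary identity $E_{jl}BE_{lj}=B_{ll}E_{jj}$ gives $\Tr(AE_{jl}BE_{lj})=A_{jj}B_{ll}$, whose double sum over $j,l$ equals $\Tr(A)\Tr(B)$; so the right-hand side contributes $-\frac{1}{N}\Tr(A)\Tr(B)$. Dividing by $N$ yields $-\tr(A)\tr(B)$. The only substantive step is the basis-level verification of $(\ast)$, and it is direct; I foresee no real obstacle.
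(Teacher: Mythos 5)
Your proof is correct. Let me flag how it differs from the paper's.

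The paper proves \eqref{coal} first in the ``abstract'' direction: for $A,B\in\u(N)$ the identity is just the orthonormal expansion of the scalar product $\langle A,B\rangle_{\u(N)}$, and the general case follows from $\M_N(\C)=\u(N)\oplus i\u(N)$ and $\C$-bilinearity. It then \emph{deduces} the coordinate identity $\sum_k (X_k)_{ij}(X_k)_{lm}=-\frac{1}{N}\delta_{i,m}\delta_{j,l}$ by specializing \eqref{coal} to $A=E_{ji}$, $B=E_{ml}$, and obtains \eqref{split} by expanding the trace. You invert this logic: you prove that same coordinate identity directly, phrased as the basis-independent tensor identity $(\ast)$, via an explicit computation in the $(X_{jk},Y_{jk},H_j)$ basis, and you then obtain both \eqref{coal} and \eqref{split} as contractions of $(\ast)$ against two different $\C$-bilinear maps. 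So the two proofs hinge on the same underlying tensor relation; the paper derives it as a consequence of \eqref{coal}, whereas you establish it first and treat the two formulas uniformly afterwards. Your route requires one explicit basis computation that the paper's proof of \eqref{coal} avoids, but in exchange it makes the parallel between \eqref{coal} and \eqref{split} more transparent, since the only difference is the contraction applied. One small thing worth spelling out in your write-up is the basis-independence claim for the left side of $(\ast)$: it holds because two real orthonormal bases of $\u(N)$ differ by an orthogonal matrix $O$, and $\sum_k(OX)_k\otimes(OX)_k=\sum_{l,m}(\sum_k O_{kl}O_{km})X_l\otimes X_m=\sum_l X_l\otimes X_l$; calling it ``manifest'' is a bit quick.
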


\proof 1. For $A,B\in \u(N)$, this equality multiplied by $N^4$ is indeed simply 
$$\sum_{k=1}^{N^2} \langle A,X_{k}\rangle_{\u(N)}\langle B,X_{k}\rangle_{\u(N)}=\langle A,B\rangle_{\u(N)}.$$
The general case follows thanks to the equality $\M_{N}(\C)=\u(N)\oplus i \u(N)$ and the fact that the relations are $\C$-bilinear in $(A,B)$.

2. Choose $i,j,l,m\in \{1,\ldots,N^2\}$. By taking $A=E_{ji}$ and $B=E_{ml}$ in the first relation, we find
$$\sum_{k=1}^{N^2} (X_k)_{ij} (X_k)_{lm} = - \frac{1}{N} \delta_{i,m} \delta_{j,l}.$$
The second relation follows by developing the trace. \qed

\begin{prop}\label{ff'} Let $f:\UC \to \R$ be a differentiable function. Then $\tr f$ is differentiable and, for all $U\in \U(N)$ and all $Y\in \u(N)$, we have
\begin{equation}\label{L tr}
(\L_{Y}(\tr f))(U)=-i \tr (f'(U) Y).
\end{equation}
In particular, $\forall U\in \U(N), \; \Vert \nabla (\tr f) (U) \Vert^2 = \frac{1}{N^2} \tr(f'(U)^2)$.
\end{prop}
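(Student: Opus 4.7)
The strategy is to pass the identity \eqref{der df} through the trace, reduce the verification of \eqref{L tr} to a diagonal computation via a conjugation argument, and then derive the norm identity from Lemma \ref{trace}.

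Since the normalized trace is a continuous linear functional on $\M_N(\C)$, Proposition \ref{derivee f} yields directly
\[ (\L_Y \tr f)(U) = \frac{d}{dt}_{|t=0}\tr f(Ue^{tY}) = \tr\bigl[{\sf D}f(L_U,R_U)(UY)\bigr]. \]
I would then observe that both sides of \eqref{L tr} are invariant under the simultaneous conjugation $(U,Y) \mapsto (PUP^{-1}, PYP^{-1})$ with $P\in \U(N)$: the right-hand side by cyclicity of the trace, and the left-hand side because the map $\Phi_P : M \mapsto P^{-1}MP$ intertwines $L_U, R_U$ with $L_{P^{-1}UP}, R_{P^{-1}UP}$, so that ${\sf D}f(L_U, R_U) = \Phi_P^{-1} \circ {\sf D}f(L_{P^{-1}UP}, R_{P^{-1}UP}) \circ \Phi_P$ by functional calculus. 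This reduces the verification to the case where $U = D$ is diagonal with eigenvalues $u_1,\ldots,u_N$.

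In that diagonal case, $DY = \sum_{j,k} u_j Y_{jk} E_{jk}$, and each matrix unit $E_{jk}$ is a joint eigenvector of the commuting normal operators $L_D, R_D$, hence of ${\sf D}f(L_D,R_D)$ with eigenvalue ${\sf D}f(u_j,u_k)$. The off-diagonal matrix units have vanishing trace, so only the terms $j=k$ contribute, and the definition ${\sf D}f(u_j,u_j)=-if'(u_j)/u_j$ produces exactly $-i\tr(f'(D)Y)$. Differentiability of $\tr f$ is a byproduct of the same computation, the directional derivative depending linearly on $Y$.

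For the norm identity, expanding $\nabla(\tr f)(U)$ in an orthonormal basis $(X_k)_k$ of $\u(N)$ and using that the coefficients $(\L_{X_k}\tr f)(U) = -i\tr(f'(U)X_k)$ are real (the matrix $f'(U)$ is Hermitian and each $X_k$ anti-Hermitian, so $\tr(f'(U)X_k)$ is purely imaginary) gives
\[ \|\nabla(\tr f)(U)\|^2_{\u(N)} = \sum_{k=1}^{N^2}(\L_{X_k}\tr f)(U)^2 = -\sum_{k=1}^{N^2}\tr(f'(U)X_k)^2, \]
and relation \eqref{coal} of Lemma \ref{trace} with $A=B=f'(U)$ identifies this quantity with $\frac{1}{N^2}\tr(f'(U)^2)$. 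The only slightly delicate step in this plan is the conjugation equivariance of the operator ${\sf D}f(L_U,R_U)$, but this is a routine consequence of the joint spectral theorem for the commuting unitary operators $L_U$ and $R_U$ acting on $\M_N(\C)$.
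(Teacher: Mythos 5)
Your proof is correct and follows essentially the same route as the paper: reduce to diagonal $U$ by conjugation equivariance, compute via the eigenvector structure of $E_{jk}$ under $L_D,R_D$, and obtain the norm identity from relation \eqref{coal}. The one small difference is cosmetic: the paper obtains the reduction to the diagonal case in one line by noting that $\tr f$ is a class function, so $(\L_Y \tr f)(U) = (\L_{VYV^{-1}} \tr f)(VUV^{-1})$ directly from the definition of $\L$, whereas you derive the same equivariance by passing through the operator formula $\tr\bigl[{\sf D}f(L_U,R_U)(UY)\bigr]$ and the intertwining by $\Phi_P$; both are valid, the paper's being a bit shorter and yours making more of the mechanism visible.
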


\begin{proof} Since $\tr f$ is invariant by conjugation, we have for all $U,V\in \U(N)$ and all $Y\in \u(N)$ the equality $(\L_{Y}(\tr f))(U)=(\L_{VYV^{-1}}(\tr f))(V U V^{-1})$. Hence, it suffices to check (\ref{L tr}) for all $Y$ when $U$ is diagonal. In this case, the result is a direct consequence of Proposition \ref{derivee f}. The second assertion follows from the definition of the gradient and the identity (\ref{coal}).
\end{proof}

\subsection{Lipschitz norms again}
At the end of the proof of Proposition \ref{main technical} (see Section \ref{est lip nor}), we will need to estimate the Lipschitz norm of a function of a unitary matrix of a special form.  We state and prove this estimation below, although the reader might want to skip it now and jump to Section \ref{con bra}.

\begin{prop}\label{lip f f} Let $f$ be an element of $C^{1,1}(\UC)$. Let $V,W$ be two elements of $\U(N)$. Define a function $F_{V,W}:\U(N)\to \C$ by setting
\[ F_{V,W}(U)=\tr \left(f'(UV) f'(UW) \right).\]
Then $F$ is Lipschitz continuous and we have the estimate
\[\| F_{V,W} \|_{\rm Lip} \leq \frac{\pi}{N} \| f'\|_{L^\infty} \| f''\|_{L^\infty}.\]
\end{prop}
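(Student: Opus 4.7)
The plan is to bound the gradient of $F_{V,W}$ uniformly on $\U(N)$ and then integrate along minimizing geodesics. First, I would differentiate the maps $U \mapsto f'(UV)$ and $U \mapsto f'(UW)$ using Proposition \ref{derivee f}, exploiting the identity $U e^{tY} V = (UV) e^{t V^{-1} Y V}$ to reduce to the right-multiplication form treated there. Since $f' \in \mathrm{Lip}(\UC)$ (because $f \in C^{1,1}(\UC)$), we obtain, for a.e.\ $U$ and all $Y \in \u(N)$,
\[
\frac{d}{dt}\Big|_{t=0} f'(U e^{tY} V) = \bigl[{\sf D} f'(L_{UV}, R_{UV})\bigr](U Y V),
\]
and similarly with $W$ in place of $V$. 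Denoting the two right-hand sides by $M_V(U,Y)$ and $M_W(U,Y)$, the product rule gives
\[
(\L_Y F_{V,W})(U) = \tr\bigl(M_V(U,Y)\, f'(UW)\bigr) + \tr\bigl(f'(UV)\, M_W(U,Y)\bigr).
\]

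I would then estimate each of these two traces via Cauchy--Schwarz for the Hilbert--Schmidt pairing $(A,B)\mapsto \tr(A^*B)$. Since $f'(UW)$ is Hermitian with spectrum contained in $[-\|f'\|_{L^\infty},\|f'\|_{L^\infty}]$, one has $\tr(f'(UW)^2) \leq \|f'\|_{L^\infty}^2$; similarly for $f'(UV)$. For the other factor, I would diagonalize $UV = Q D Q^*$ with $D = \mathrm{diag}(\lambda_1, \ldots, \lambda_N)$, so that in the orthonormal basis $\{Q E_{ij} Q^*\}_{i,j}$ of $\M_N(\C)$ the operator ${\sf D}f'(L_{UV}, R_{UV})$ acts as the Schur (Hadamard) multiplier with symbol $[{\sf D}f'(\lambda_i, \lambda_j)]_{i,j}$. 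Each entry of this symbol is bounded by $\tfrac{\pi}{2}\|f''\|_{L^\infty}$ by the Lipschitz bound $|f'(z)-f'(w)| \leq \tfrac{\pi}{2}\|f''\|_{L^\infty}|z-w|$ on $\UC$ recalled in Section \ref{ubm-sec}, so the multiplier contracts the normalized Hilbert--Schmidt norm by at most this constant:
\[
\tr\bigl(M_V(U,Y)^* M_V(U,Y)\bigr) \leq \tfrac{\pi^2}{4}\|f''\|_{L^\infty}^2\, \tr\bigl((UYV)^*(UYV)\bigr) = \tfrac{\pi^2}{4N^2}\|f''\|_{L^\infty}^2\, \|Y\|_{\u(N)}^2,
\]
where at the last step I used $\tr((UYV)^*(UYV)) = \tr(Y^*Y) = \|Y\|_{\u(N)}^2/N^2$.

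Combining these bounds, each of the two traces is at most $\tfrac{\pi}{2N}\|f'\|_{L^\infty}\|f''\|_{L^\infty}\|Y\|_{\u(N)}$, and hence $|(\L_Y F_{V,W})(U)| \leq \tfrac{\pi}{N}\|f'\|_{L^\infty}\|f''\|_{L^\infty}\|Y\|_{\u(N)}$ for a.e.\ $U$. This yields $\|\nabla F_{V,W}(U)\|_{\u(N)} \leq \tfrac{\pi}{N}\|f'\|_{L^\infty}\|f''\|_{L^\infty}$ almost everywhere, and since $F_{V,W}$ is continuous (as a trace of a product of continuous matrix-valued functions), integration along minimizing geodesics in $\U(N)$ gives the stated Lipschitz estimate. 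The main technical point is the identification of ${\sf D}f'(L_{UV}, R_{UV})$ with a Schur multiplier in the eigenbasis of $UV$ and the resulting entrywise control of its action on the normalized Hilbert--Schmidt norm; the remaining ingredients are the chain and product rules together with Cauchy--Schwarz.
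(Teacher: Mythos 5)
Your proof is correct and follows essentially the same route as the paper's: differentiate $U\mapsto f'(UV)$ via Proposition \ref{derivee f} (you reduce to the right-multiplication form by writing $Ue^{tY}V=(UV)e^{tV^{-1}YV}$, while the paper conjugates and writes $f'(Ue^{tX}V)=V^{-1}f'(VUe^{tX})V$, but these are equal by the conjugation-covariance of ${\sf D}f'$), then apply Cauchy--Schwarz for the normalized Hilbert--Schmidt pairing and bound the operator norm of ${\sf D}f'(L,R)$ by $\tfrac{\pi}{2}\|f''\|_{L^\infty}$. Your explicit identification of ${\sf D}f'(L_{UV},R_{UV})$ as a Schur multiplier in the eigenbasis of $UV$ is just the unpacked version of the paper's observation that this operator is normal with spectral radius bounded by $\|{\sf D}f'\|_{\infty}$; the rest of the argument and the final constant agree.
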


\begin{proof} We prove that $F_{V,W}$ is differentiable almost everywhere on $\U(N)$ and estimate the $L^\infty$ norm of its differential. According to Proposition \ref{derivee f}, we have, for all $X\in \u(N)$ and almost all $U\in \U(N)$, the equality
\begin{align*}
(\L_X F_{V,W})(U)=&\tr\left(V^{-1} {\sf D}f'(L_{VU},R_{VU})(VUX) V f'(UW)\right) + \\
& \tr\left( f'(UV) W^{-1} {\sf D}f'(L_{WU},R_{WU})(WUX) W\right).
\end{align*}
We have used the fact that $\frac{d}{dt}_{|t=0} f'(Ue^{tX}V)=V^{-1} \frac{d}{dt}_{|t=0}f'(VUe^{tX}) V$. Let us focus on the first term of the right-hand side, the second being similar. By the Cauchy-Schwarz inequality,
\[ \left| \tr\left(V^{-1} {\sf D}f'(L_{VU},R_{VU})(VUX) V f'(UW)\right) \right|^2 \leq \tr(M^*M) \tr\left(f'(UW)^* f'(UW)\right),\]
where we have set $M={\sf D}f'(L_{VU},R_{VU})(VUX)$. 

Recall that $\M_N(\C)$ is endowed with the scalar product $\langle A,B \rangle =N \Tr(A^* B)$. We claim that the operator norm of the endomorphism ${\sf D}f'(L_{VU},R_{VU})$ of $\M_N(\C)$ with respect to this norm is bounded above by $\frac{\pi}{2}\|f''\|_{L^\infty}$. Indeed, this operator is normal with respect to this scalar product, so that  its operator norm equals its spectral radius, which is smaller than the $L^\infty$ norm of ${\sf D}f'$. Hence, we find
\[ \tr(M^*M)^{\frac{1}{2}}\leq \frac{\pi}{2}\|f''\|_{L^\infty} \tr(X^*X)^{\frac{1}{2}}.\]
It follows that
\[ \| \L_X F_{V,W} \|_{L^\infty} \leq 2 \frac{\pi}{2} \|f''\|_{L^\infty} \frac{\|X\|_{\u(N)}}{N} \|f'\|_{L^\infty},\]
from which the result follows easily. 
\end{proof}

\section{Convergence of the bracket}\label{con bra}

In this section, we prove the first two assertions of Proposition \ref{main technical}. Let us first prove a fundamental property of the generator of the Brownian motion on $\U(N)$. The action of $\U(N)$ on $\u(N)$ by conjugation is an isometric action. Hence, for all $V\in \U(N)$, the processes $U_{N}$ and $V U_{N} V^{-1}$ satisfy two stochastic differential equations (see (\ref{eds})) driven by two processes in $\u(N)$ with the same distribution, so that they have the same distribution.

\begin{lem}\label{commute} Let $F:\U(N)\to \R$ be a Lipschitz continuous function. Let $Y$ be an element of $\u(N)$. Let $t\geq 0$ be a real number. Then $\L_{Y}(P_{t}F)=P_{t}(\L_{Y}F)$.
\end{lem}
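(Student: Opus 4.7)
My plan is to exploit the conjugation invariance of the Brownian motion that is emphasized in the paragraph immediately preceding the lemma. The starting point is the probabilistic representation
\[
(P_t F)(U) = \mathbb{E}\bigl[F(U\, U_N(t))\bigr],
\]
where $U_N$ is the Brownian motion issued from $I_N$. Taking $V = e^{sY}$ in the remark preceding the lemma, the processes $U_N(\cdot)$ and $e^{sY} U_N(\cdot) e^{-sY}$ have the same distribution, because they satisfy the defining SDE \eqref{eds} driven by the two processes $K_N$ and $e^{sY} K_N e^{-sY}$, whose laws coincide by conjugation-invariance of $\langle \cdot,\cdot\rangle_{\u(N)}$. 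Consequently, $e^{sY} U_N(t)$ and $U_N(t) e^{sY}$ have the same distribution for every $s \in \mathbb{R}$ and every $t \geq 0$.

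Define $g_U(s) = \mathbb{E}[F(U e^{sY} U_N(t))]$. By the equality in distribution just noted,
\[
g_U(s) = \mathbb{E}\bigl[F(U e^{sY} U_N(t))\bigr] = \mathbb{E}\bigl[F(U\, U_N(t)\, e^{sY})\bigr].
\]
Differentiating the leftmost expression in $s$ at $s=0$ yields $\mathcal{L}_Y(P_t F)(U)$, while differentiating the rightmost gives $(P_t(\mathcal{L}_Y F))(U)$. Equality of the two derivatives is the content of the lemma.

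The only non-routine point is the justification of differentiating under the expectation for a merely Lipschitz continuous $F$. For that, I would use the bound
\[
\left| \frac{F(U e^{sY} U_N(t)) - F(U\, U_N(t))}{s} \right| \leq \| F \|_{\mathrm{Lip}} \cdot \frac{d(U e^{sY} U_N(t),\, U\, U_N(t))}{|s|} \leq \| F \|_{\mathrm{Lip}} \cdot \| Y \|_{\u(N)},
\]
where the last inequality uses left-invariance of the Riemannian metric and the fact that $s \mapsto e^{sY}$ is a constant-speed geodesic. Rademacher's theorem guarantees the almost-everywhere existence of $\mathcal{L}_Y F$ along the paths $U \cdot U_N(t) \cdot e^{sY}$, so the pointwise derivative exists for almost every sample; dominated convergence then allows one to pass the $s$-derivative inside $\mathbb{E}$ on both sides, and the same Lipschitz control makes $P_t(\mathcal{L}_Y F)$ well defined (as the density of the law of $U \cdot U_N(t)$ is smooth). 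The main (minor) obstacle is this measurability/dominated convergence bookkeeping; the algebraic heart of the argument is the conjugation invariance already highlighted by the authors.
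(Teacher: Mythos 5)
Your proof is correct and follows essentially the same route as the paper: write $P_tF(U)=\E[F(Ue^{sY}U_N(t))]$, use the conjugation invariance of the Brownian motion (i.e., that $U_N(t)$ and $e^{-sY}U_N(t)e^{sY}$ have the same law) to move $e^{sY}$ to the other side of $U_N(t)$, and then exchange differentiation and expectation; your extra paragraph on dominated convergence merely makes explicit what the paper leaves implicit. One tiny slip: the bound $d(Ue^{sY}U_N(t),\,UU_N(t))\leq |s|\,\|Y\|_{\u(N)}$ needs bi-invariance of the metric (left-invariance to strip $U$, then right-invariance to strip $U_N(t)$), not just left-invariance as you say, but this holds since the scalar product on $\u(N)$ is $\mathrm{Ad}$-invariant.
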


\begin{proof} Since $F$ is Lipschitz continuous, $\L_{Y}F$ is well-defined as an element of $L^\infty(\U(N))$. The result amounts simply to the interversion of an integration and a derivation: for all $U\in \U(N)$,
\begin{align*}
\L_{Y}(P_{t}F)(U)&=\frac{d}{ds}_{|s=0} \E\left[F(U e^{sY} U_{N}(t))\right]=\frac{d}{ds}_{|s=0}\E\left[F(U U_{N}(t) e^{sY})\right]\\
&=\E\left[\frac{d}{ds}_{|s=0} F(U U_{N}(t) e^{sY})\right]=P_{t}(\L_{Y}F)(U).
\end{align*}
We have used the fact that $U_{N}(t)$ has the same distribution as $e^{-sY}U_{N}(t)e^{sY}$.
\end{proof}

\subsection{It\^{o} formula} The following result summarizes the applications of It\^{o} formula that we will use. The third assertion below implies, by polarization, the first assertion of Proposition \ref{main technical}.

\begin{prop}\label{M} Let $F:\U(N)\to \R$ be an integrable function. Define a real-valued martingale $L^F$ indexed by $[0,T]$ by setting, for all $t\in [0,T]$, $L^{F}(t)=\E[F(U_{N}(T))|\F_{N,t}]$. Let $(X_k)_{k\in \{1,\ldots,N^2\}}$ be an orthonormal basis of $\u(N)$. Then the following equalities
hold for all $t\in [0,T]$.
\begin{enumerate}
\item $L^F(t)=(P_{T-t} F)(U_{N}(t))$.
\item $\displaystyle L^F(t)=L^F(0)+\int_{0}^{t} \sum_{k=1}^{N^2} \L_{X_{k}}(P_{T-s} F)(U_{N}(s))\;
d\langle X_{k},K_{N}\rangle_{\u(N)}(s)$.
\item $\displaystyle \langle L^F\rangle(t)=\int_0^t \left\Vert (\nabla (P_{T-s} F))(U_N(s)) \right\Vert^2 \; ds$.
\item If $F$ is Lipschitz continuous, then $\displaystyle \langle L^F\rangle(t)=\int_{0}^{t} \sum_{k=1}^{N^2} [P_{T-s}(\L_{X_{k}}F)(U_{N}(s))]^{2}\; ds$.
\end{enumerate}
\end{prop}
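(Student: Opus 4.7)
The plan is to apply the It\^o formula \eqref{Ito} to the time-dependent function $(s,U)\mapsto(P_{T-s}F)(U)$, which solves the backward heat equation and therefore generates no drift when evaluated along $U_N$. I would prove (1) first and separately, using the Markov property: the ``right increment'' $V(s):=U_N(t)^{-1}U_N(s)$, $s\geq t$, satisfies the same SDE \eqref{eds} driven by the increments of $K_N$ after time $t$, with initial condition $I_N$. Since those increments are independent of $\F_{N,t}$ and form a $\u(N)$-valued Brownian motion, $V(s)$ has the law of an independent copy of $U_N(s-t)$. Combining this with the identity $(P_r F)(U)=\E[F(U U_N(r))]$ and the tower property gives $L^F(t)=\E[F(U_N(t)V(T))\mid \F_{N,t}]=(P_{T-t}F)(U_N(t))$.

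For (2) and (3) I would use the smoothing property of the heat semigroup, which ensures that for every $s<T$ the function $g_s:=P_{T-s}F$ is $C^\infty$ on $\U(N)$, even when $F$ is only assumed integrable. The function $g:(s,U)\mapsto g_s(U)$ is smooth on $[0,T)\times \U(N)$ and satisfies $\partial_s g+\frac{1}{2}\Delta g=0$. Applying \eqref{Ito} to $g$ on an interval $[0,t]$ with $t<T$, the drift term cancels and we obtain
\[L^F(t)=g(t,U_N(t))=g(0,I_N)+\sum_{k=1}^{N^2}\int_0^t \L_{X_k}(P_{T-s}F)(U_N(s))\,d\langle X_k,K_N\rangle_{\u(N)}(s).\]
The bracket of this stochastic integral is $\int_0^t \sum_k [\L_{X_k}(P_{T-s}F)(U_N(s))]^2\,ds$, because the processes $\langle X_k,K_N\rangle_{\u(N)}$ are independent standard real Brownian motions; since $(X_k)$ is an orthonormal basis of $\u(N)$, this sum of squares equals $\|\nabla(P_{T-s}F)(U_N(s))\|_{\u(N)}^2$, yielding (3).

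For (4), under the extra assumption that $F$ is Lipschitz continuous, each $\L_{X_k}F$ is a bounded measurable function, and Lemma \ref{commute} provides the interchange $\L_{X_k}(P_{T-s}F)=P_{T-s}(\L_{X_k}F)$. Substituting this into (3) gives the stated formula.

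The main obstacle is not deep but requires a bit of care: the It\^o formula \eqref{Ito} is stated for functions of class $C^2$ on $[0,T]\times\U(N)$, whereas $(s,U)\mapsto P_{T-s}F(U)$ is only known to be smooth on $[0,T)\times\U(N)$ when $F$ is merely integrable, and may well be discontinuous at $s=T$. The cleanest way around this is to apply \eqref{Ito} on $[0,t]$ for $t<T$ and then to let $t\uparrow T$, checking that both sides pass to the limit: the left-hand side by $L^1$-convergence of the closed martingale $L^F$, and the right-hand side by monotone convergence of the bracket (together with the standard stability of stochastic integrals under such convergence). After this extension, all four assertions hold on the whole of $[0,T]$ as claimed.
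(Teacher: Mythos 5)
Your proof follows essentially the same approach as the paper: assertion (1) via the Markov property and independence of multiplicative increments, assertion (2) by applying It\^o's formula \eqref{Ito} to $(s,U)\mapsto(P_{T-s}F)(U)$ and observing the drift cancels by the backward heat equation, assertion (3) from the independence of the driving Brownian motions and the orthonormality of $(X_k)$, and assertion (4) by substituting Lemma \ref{commute}. The one place where you go beyond the paper's own argument is in flagging the regularity issue at $s=T$ and handling it via a limit $t\uparrow T$; the paper applies \eqref{Ito} directly on $[0,T]$ with $G(t,U)=(P_{T-t}F)(U)$ without comment, even though $G(T,\cdot)=F$ is only integrable, so your remark is a genuine (minor) improvement to the exposition rather than a different method.
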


\begin{proof} 1. Choose $t\in [0,T]$. Since the unitary Brownian motion has independent multiplicative increments, $L^F(t)$ can be rewritten as
\begin{multline*}
L^F(t)=\E[F(U_{N}(T))|\F_{N,t}]=\E[F(U_{N}(t)U_{N}^*(t)U_{N}(T))|\F_{N,t}]\\=\E[F(U_{N}(t)V_{N}(T-t))|\F_{N,t}],
\end{multline*}
where $V_{N}$ is a Brownian motion on $\U(N)$ with the same distribution as $U_{N}$ and independent
of $U_{N}$. The result follows.

2. Let us apply (\ref{Ito}) to the function $G:[0,T]\times \U(N)\to \R$ defined by $G(t,U)=(P_{T-t}F)(U)$. It follows from the definition of the semigroup $(P_{t})_{t\geq 0}$ that $G$ satisfies
the time-reversed heat equation $\frac{1}{2}\Delta G+\partial_{t}G=0$. Hence, It\^{o}'s formula reads
$$L^F(t)=L^F(0)+\sum_{k=1}^{N^2}\int_{0}^{t} (\L_{X_{k}}(P_{T-s}F))(U_{N}(s)) d\langle
X_{k},K_{N}\rangle_{\u(N)}(s).$$

3. The equality follows immediately from the equality 2 and the fact that the processes  $\{\langle X_{k},K_{N}\rangle_{\u(N)}:k\in\{1,\ldots,N^2\}\}$ are independent standard real Brownian motions. 

4. This equality follows from the previous one by applying Lemma \ref{commute}.
\end{proof}

\subsection{Expectation of the bracket} \label{expbra}
We can now prove the second assertion of Proposition \ref{main technical}. Recall that we use the notation $E_{N}^{f,g}(s,U)=N^{2}\langle  \nabla (P_{T-s} (\tr f))(U) ,\nabla (P_{T-s} (\tr g))(U)\rangle_{\u(N)}$. We will use the fact, which is a consequence of Jensen's inequality, that for any square-integrable function $G:\U(N)\to \R$, and for all $t\geq 0$, $(P_{t}G)^{2}\leq P_{t}(G^2)$.

\begin{proof}[Proof of the second assertion of Proposition \ref{main technical}] Let $f:\UC \to \R$ be Lipschitz continuous.  By definition and by Lemma \ref{commute}
\begin{align*}
E_{N}^{f,f}(s,U)&=N^2 \sum_{k=1}^{N^2} (P_{T-s}(\L_{X_{k}} (\tr f)))(U)^2 \\
&\leq N^2 \sum_{k=1}^{N^2} P_{T-s}((\L_{X_{k}} \tr f)^{2})(U) 
=N^2 P_{T-s}(\Vert \nabla (\tr f)\Vert^2)(U).
\end{align*}
By Proposition \ref{ff'} and the fact that $P_{T-s}$ does not increase the uniform norm, this implies that
$$|E_{N}^{f,f}(s,U)|\leq \|f'\|_{L^\infty}^{2}.$$
By polarization, the estimation of $|E^{f,g}_{N}(s,U)|$ follows.

Now, let us consider two
independent copies $V_{N}$ and $W_{N}$ of the unitary Brownian motion $U_{N}$. Then, denoting by $\E_{V_{N},W_{N}}$ the expectation with respect to $V_{N}$ and $W_{N}$ only, we have
\begin{align*}
&E_{N}^{f,f}(s,U_{N}(s))=N^2 \sum_{k=1}^{N^2} (P_{T-s}(\L_{X_{k}} (\tr f)))(U_N(s))^2\\
&=N^2 \sum_{k=1}^{N^2} \E_{V_{N},W_{N}}[(\L_{X_{k}} \tr f)(U_{N}(s)V_{N}(T-s))(\L_{X_{k}} \tr f)(U_{N}(s)W_{N}(T-s))].
\end{align*}
Using successively Proposition \ref{ff'} and Lemma \ref{trace}, we find
$$E_{N}^{f,f}(s,U_{N}(s))=\E_{V_{N},W_{N}}\left[\tr\left(f'(U_{N}(s)V_{N}(T-s)) f'(U_{N}(s)W_{N}(T-s))\right)\right].$$

Taking the expectation with respect to $U_{N}$, we find finally
$$\E[E_{N}^{f,f}(s,U_{N}(s))]=\E\left[\tr\left(f'(U_{N}(s)V_{N}(T-s)) f'(U_{N}(s)W_{N}(T-s))\right)\right].$$

Let $({\mathcal A},\tau)$ be a $C^{*}$-probability space which carries three free mutliplicative brownian motions $u,v,w$ which are mutually free. According to Theorem \ref{cv mb mbul}, the family $(U_{N}(s),V_{N}(t),W_{N}(u))_{s,t,u\geq 0}$, seen as 
a collection of non-commutative random variables in the non-commutative probability space $(L^{\infty}\otimes \M_{N}(\C),\E\otimes \tr)$, converges in distribution to $(u_{s},v_{t},w_{u})_{s,t,u\geq 0}$ as $N$ tends to infinity. This implies in particular that for all non-commutative polynomial $p$ in three variables and their adjoints, and for all $s,t,u\geq 0$ 
$$\E[\tr \; p(U_{N}(s),V_{N}(t),W_{N}(u))] \build{\longrightarrow}_{N\to \infty}^{} \tau(p(u_{s},v_{t},w_{u})).$$

Let us fix $s\in [0,T)$. Since $\mathcal A$ is a $C^*$-algebra, there is a continuous functional calculus on normal elements, hence on unitary elements, and $f'(u_{s}v_{T-s})f'(u_{s}w_{T-s})$ is a well-defined element of $\mathcal A$. On the other hand, choose $\epsilon>0$ and let $q(z,w)$ be a polynomial function in $z,w$ and their adjoints such that $\sup_{z,w\in \UC}|f'(z)f'(w)-q(z,w)|<\epsilon$.
Then
\begin{align*}
&\left| \E\left[\tr\left(f'(U_{N}(s)V_{N}(T-s)) f'(U_{N}(s)W_{N}(T-s))\right)\right] - \tau(f'(u_{s}v_{T-s})f'(u_{s}w_{T-s}))\right| \\
&\hspace{0.5cm}\leq \left| \E\left[\tr\left(f'(U_{N}(s)V_{N}(T-s)) f'(U_{N}(s)W_{N}(T-s))\right)\right.\right.\\
& \hspace{5cm} \left.\left.- \tr\; q(U_{N}(s)V_{N}(T-s),U_{N}(s)W_{N}(T-s))\right] \right| \\
&\hspace{0.5cm}+ \left| \E\left[\tr\; q(U_{N}(s)V_{N}(T-s),U_{N}(s)W_{N}(T-s))\right] - \tau(q(u_{s}v_{T-s},u_{s}w_{T-s}))\right|\\
& \hspace{0.5cm}+ \left| \tau(q(u_{s}v_{T-s},u_{s}w_{T-s})) - \tau(f'(u_{s}v_{T-s})f'(u_{s}w_{T-s}))\right|.
\end{align*}

The first and the third term are smaller than the uniform distance between $q(\cdot,\cdot)$ and $f'(\cdot)f'(\cdot)$, hence smaller than $\epsilon$. The middle term tends to $0$ as $N$ tends to infinity. Altogether, this proves that
$$\E[E_{N}^{f,f}(s,U_{N}(s))] \build{\longrightarrow}_{N\to\infty}^{} \tau(f'(u_{s}v_{T-s})f'(u_{s}w_{T-s})),$$
from which the expected result follows by polarization.
\end{proof}

\section{Convergence of the variance of the bracket} \label{con var}
This section is devoted to the proof of the third assertion of Proposition \ref{main technical}.

\subsection{A weak concentration inequality} 

Consider a function $F:\U(N)\to \R$. If $F$ is Lipschitz continuous, then  the equality $\Vert F \Vert_{\rm Lip} = \Vert \nabla F \Vert_{L^{\infty}}$ holds. The goal of this paragraph is to prove the following inequality.

\begin{prop}\label{concentre} Let $F:\U(N)\to \R$ be a Lipschitz continuous function. For all
$T\geq 0$, one has the following inequality:
$$\Var[F(U_N(T))]\leq T \Vert F \Vert_{\rm Lip}^2.$$
\end{prop}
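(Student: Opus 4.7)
The plan is to exploit the martingale representation we already set up in Proposition \ref{M}, together with the commutation of $\mathcal{L}_Y$ and $P_t$ from Lemma \ref{commute}. Setting $L^F(t)=\E[F(U_N(T))\mid \F_{N,t}]$, we have
\[
\Var[F(U_N(T))] = \E\bigl[(L^F(T)-L^F(0))^2\bigr] = \E\bigl[\langle L^F\rangle(T)\bigr],
\]
so the whole problem reduces to producing a uniform pointwise bound on the integrand in $\langle L^F\rangle(T)$.

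By the third and fourth items of Proposition \ref{M},
\[
\langle L^F\rangle(T) = \int_0^T \bigl\|\nabla(P_{T-s}F)(U_N(s))\bigr\|^2\, ds = \int_0^T \sum_{k=1}^{N^2}\bigl[P_{T-s}(\mathcal{L}_{X_k}F)(U_N(s))\bigr]^2\, ds,
\]
where I have commuted $\mathcal{L}_{X_k}$ past $P_{T-s}$ using Lemma \ref{commute}, which applies since $\mathcal{L}_{X_k}F$ is well-defined in $L^\infty$ by Lipschitz continuity. Then Jensen's inequality (in its form $(P_t G)^2 \le P_t(G^2)$, already used in Section \ref{expbra}) gives
\[
\sum_{k=1}^{N^2}\bigl[P_{T-s}(\mathcal{L}_{X_k}F)\bigr]^2 \le \sum_{k=1}^{N^2} P_{T-s}\bigl[(\mathcal{L}_{X_k}F)^2\bigr] = P_{T-s}\bigl[\|\nabla F\|^2\bigr].
\]

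Next I invoke the identification $\|\nabla F\|_{L^\infty} = \|F\|_{\rm Lip}$ stated just before the proposition, which is the standard Riemannian fact that the Lipschitz norm of a function on a Riemannian manifold equals the essential supremum of the norm of its (a.e.-defined) gradient. Since $P_{T-s}$ is a Markov operator it does not increase the uniform norm, so
\[
\bigl\|\nabla(P_{T-s}F)(U_N(s))\bigr\|^2 \le P_{T-s}\bigl[\|\nabla F\|^2\bigr](U_N(s)) \le \|F\|_{\rm Lip}^2
\]
pointwise. Integrating over $s\in[0,T]$ and taking expectations yields $\E[\langle L^F\rangle(T)] \le T\|F\|_{\rm Lip}^2$, as desired.

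There is no real obstacle here, the argument is three lines once the martingale machinery of Proposition \ref{M} and the commutation Lemma \ref{commute} are in place; the only mild subtlety is justifying that the pointwise identities involving $\nabla F$ make sense when $F$ is merely Lipschitz (so $\nabla F$ is only defined almost everywhere on $\U(N)$), but this is exactly what the identity $\|\nabla F\|_{L^\infty}=\|F\|_{\rm Lip}$ takes care of, and one may alternatively approximate $F$ by smooth functions with controlled Lipschitz norms (e.g.\ by convolution with the heat semigroup $P_\varepsilon F$ and letting $\varepsilon\downarrow 0$) to reduce to the smooth case without loss.
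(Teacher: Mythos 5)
Your proof is correct and takes essentially the same route as the paper: express the variance as $\E[\langle L^F\rangle(T)]$ via Proposition~\ref{M} and then bound the integrand uniformly by $\|F\|_{\rm Lip}^2$. The one small difference is in how that pointwise bound is obtained: the paper observes directly that $\|\nabla(P_{T-s}F)\|_{L^\infty}=\|P_{T-s}F\|_{\rm Lip}\le\|F\|_{\rm Lip}$ (the semigroup contracts the Lipschitz norm, a consequence of bi-invariance of the metric), whereas you commute $\L_{X_k}$ past $P_{T-s}$ via Lemma~\ref{commute} and apply Jensen's inequality to reach $\|\nabla(P_{T-s}F)\|^2\le P_{T-s}(\|\nabla F\|^2)\le\|F\|_{\rm Lip}^2$; this is the same argument the paper uses later in Section~\ref{expbra}, so your variant is, if anything, slightly more tightly anchored to the paper's stated lemmas.
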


Note that this inequality is preserved by rescaling of the Riemannian metric on $\U(N)$, that is, by rescaling of the scalar product on $\u(N)$. Indeed, let $\lambda$ be a positive real and
let us consider the scalar product $\langle \cdot, \cdot \rangle_{\u}^{\widetilde{}} =\lambda
\langle \cdot, \cdot\rangle_{\u}$ on $\u(N)$. Then, putting a tilda to the quantities associated
with this new scalar product, we have on one hand $\tilde d=\lambda^{\frac{1}{2}} d$ and $\Vert F
\Vert_{\widetilde{\rm Lip}} =\lambda^{- \frac{1}{2}}\Vert F \Vert_{\rm Lip}$, and on the other
hand $\widetilde \Delta = \lambda^{-1} \Delta$ and $\widetilde U_N(T)$ has the distribution of $U_N({\lambda}^{-1}{T})$.

\begin{proof} Recall the definition of the martingale $L^F$ (see Proposition \ref{M}).  The
left-hand side is equal to $\E[\langle L^F \rangle (T)]$, thus, by the third assertion of Proposition
\ref{M}, to 
\begin{multline*}
\E \int_{0}^{T} \Vert (\nabla (P_{T-s}F))(U_{N}(s))\Vert^2\; ds \leq T \sup_{s\in [0,T)} \Vert \nabla (P_{T-s} F)\Vert^{2}_{L^{\infty}}\\=T \sup_{s\in [0,T)} \Vert P_{T-s}F \Vert^{2}_{\rm Lip}.
\end{multline*}
On the other hand, since $F$ is Lipschitz continuous, for all $t\geq0$, $\| P_{t} F \|_{\rm Lip}\leq \|F \|_{\rm Lip}$. The result follows.
\end{proof}

\subsection{An estimate of a Lipschitz norm}\label{est lip nor}

With Proposition \ref{concentre} in mind, we  are going to study the Lipschitz norm of $U\mapsto E_N^{f,f}(s,U)$ in order to estimate the variance of $E_{N}^{f,f}(s,U_{N}(s))$. 

\begin{prop}\label{conc s} Assume that $f$ is of class $C^{1,1}(\UC)$. Then 
$$\sup_{s\in [0,T]} \Vert E_N^{f,f}(s,\cdot) \Vert_{\rm Lip}=O(N^{-1}).$$
\end{prop}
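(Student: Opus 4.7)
The strategy is to combine the explicit representation of $E_N^{f,f}(s,U)$ obtained at the end of Section \ref{expbra} with the Lipschitz estimate of Proposition \ref{lip f f}. The derivation carried out there, which used only the fact that $(P_{T-s}G)(U) = \E[G(U V_N(T-s))]$, together with Lemmas \ref{ff'} and \ref{trace}, actually yields, for every deterministic $U \in \U(N)$, the identity
\begin{equation*}
E_N^{f,f}(s,U) = \E\bigl[\tr\bigl(f'(U V_N(T-s))\, f'(U W_N(T-s))\bigr)\bigr],
\end{equation*}
where $V_N$ and $W_N$ are two independent copies of the unitary Brownian motion, independent of all the rest. In other words, writing $F_{V,W}(U) = \tr(f'(UV) f'(UW))$ as in Proposition \ref{lip f f}, we have $E_N^{f,f}(s,\cdot) = \E[F_{V_N(T-s),W_N(T-s)}]$.

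The key point is then that the Lipschitz norm is compatible with taking expectations: for any $U_1, U_2 \in \U(N)$, by Jensen's inequality (or a trivial triangle inequality),
\begin{equation*}
|E_N^{f,f}(s,U_1) - E_N^{f,f}(s,U_2)| \leq \E\bigl[|F_{V_N(T-s),W_N(T-s)}(U_1) - F_{V_N(T-s),W_N(T-s)}(U_2)|\bigr].
\end{equation*}
Since, for every fixed realization of $V_N(T-s)$ and $W_N(T-s)$, Proposition \ref{lip f f} gives
\begin{equation*}
\|F_{V_N(T-s),W_N(T-s)}\|_{\rm Lip} \leq \frac{\pi}{N}\, \|f'\|_{L^\infty} \|f''\|_{L^\infty},
\end{equation*}
with $\|f''\|_{L^\infty}$ understood as the Lipschitz constant of $f'$, integrating this deterministic bound yields
\begin{equation*}
\|E_N^{f,f}(s,\cdot)\|_{\rm Lip} \leq \frac{\pi}{N}\, \|f'\|_{L^\infty} \|f''\|_{L^\infty},
\end{equation*}
uniformly in $s \in [0,T]$. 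Since $f \in C^{1,1}(\UC)$ makes both $\|f'\|_{L^\infty}$ and $\|f''\|_{L^\infty}$ finite, this is the required $O(N^{-1})$ estimate.

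I do not expect any serious obstacle here: essentially all the work has been front-loaded into Proposition \ref{lip f f} and into the reformulation of $E_N^{f,f}$ in Section \ref{expbra}. The only point that deserves a little care is that the representation of $E_N^{f,f}(s,U)$ above was derived for $U = U_N(s)$, but the argument is pointwise in $U$, so it holds for any deterministic $U \in \U(N)$; this is what legitimately allows one to take a Lipschitz norm in the $U$ variable.
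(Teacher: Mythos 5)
Your proof is correct and follows essentially the same route as the paper's: write $E_N^{f,f}(s,\cdot)$ as the expectation over $V_N(T-s),W_N(T-s)$ of $F_{V,W}$, invoke Proposition \ref{lip f f} for a uniform-in-$(V,W)$ Lipschitz bound of order $N^{-1}$, and conclude that this bound survives averaging. The remarks you add (that the representation is pointwise in $U$, and the triangle/Jensen step) are just the small details the paper leaves implicit.
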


\begin{proof} The proof relies on the identity
\[E_N^{f,f}(s,U_N(s))=\E_{V_N,W_N} \left[ \tr\left( f'(U_N(s) V_N(T-s)) f'(U_N(s) W_N(T-s)) \right) \right].\]
By Proposition \ref{lip f f}, the expression between the brackets is a Lipschitz continuous function  of $U_N(s)$ for all values of $V_N(T-s)$ and $W_N(T-s)$, with a Lipschitz norm which does not depend on $V_N(T-s)$ and $W_N(T-s)$ and is $O(N^{-1})$. Hence, the same estimate holds for the expectation.
\end{proof}

\begin{proof}[Proof of the third assertion of Proposition \ref{main technical}]  It suffices to combine Proposition \ref{conc s} and Proposition
\ref{concentre} to find that that $\sup_{s\in [0,T)}\Var[E_N^{f,f}(s,U_N(s))]=O(N^{-2})$.
The same result for $E_{N}^{f,g}$ follows easily.
\end{proof}

This concludes the proof of Proposition \ref{main technical} and thus of Theorem \ref{main theo}.

\section{Other Brownian motions, on unitary and special unitary groups}\label{sec su}
\label{u su}

In this section, we explain how Theorem \ref{main theo} can be extended to other Brownian motions on the unitary group and to the Brownian motion on the special unitary group.

In this paper so far, we have considered the Brownian motion $U_N$ on $\U(N)$
associated to the scalar product on $\u(N)$ given by 
$\langle X,Y \rangle_{\u(N)}= N \Tr(X^{*}Y),$ for any $X,Y \in \u(N)$. The crucial property of this scalar product is its invariance under the action of $\U(N)$ on $\u(N)$ by conjugation. There is in fact a two-parameter family of scalar products with this invariance property, namely $a\Tr\left((X-\tr X)^*(Y-\tr Y)\right) + b \Tr(X^*) \Tr(Y)$ with $a,b>0$. Multiplying the two parameters $a$ and $b$ by the same constant simply affects the Brownian motion by a global rescaling of time, indeed dividing time by this constant, so that we may choose the value of one of them. We take $a=N$ in order to have correct asymptotics as $N$ tends to infinity. This choice being made, varying $b$ really yields different Brownian motions. It turns out to be more convenient to take $\alpha=b^{-\frac{1}{2}}$ as the parameter: we define, for all $\alpha>0$,  the scalar product
\[\langle X,Y \rangle_{\u(N)}^{(\alpha)}= N \Tr((X-\tr X)^{*}(Y-\tr Y))+\frac{1}{\alpha^{2}}\Tr(X^{*})\Tr(Y)\]
on $\u(N)$. In particular, the scalar product considered in the rest of this paper corresponds to $\alpha=1$.

In order to understand the Brownian motions associated to the scalar products $\langle \cdot, \cdot \rangle_{\u(N)}^{(\alpha)}$, we start by defining the Brownian motion on $SU(N)$, which corresponds to the limit where $\alpha$ tends to $0$.

Let us denote by $\su(N)$ the hyperplane of $\u(N)$ consisting of traceless matrices, which is also the Lie algebra of the special unitary group $\SU(N)$, and let $K_{N}^{0}$ be the linear Brownian motion on $\su(N)$ corresponding to the scalar product induced by $\langle \cdot, \cdot \rangle_{\u(N)}$. Let $V_N$ be the solution of the stochastic differential equation
\begin{equation}\label{edssu}
dV_{N}(t)=V_{N}(t)dK_{N}^{0}(t)-\frac{1}{2}\left(1-\frac{1}{N^{2}}\right)V_{N}(t)dt.
\end{equation}
One can check that if the initial condition is in  the special unitary group, then the process $V_N$ stays in it: the constant $1-\frac{1}{N^{2}}$ is designed for that purpose. We call $V_{N}$ the Brownian motion on $\SU(N)$. 


Now, for all $\alpha \geq 0$, let us consider the following process with values in $\U(N)$:
\[V_{N}^{(\alpha)} (t)=e^{\frac{i \alpha B_{t}}{N}} V_{N}(t),\]
where $(B_{t})_{t\geq 0}$ is a standard real Brownian motion independent of $V_{N}.$ 
Let $(Y_{1},\ldots,Y_{N^{2}-1})$ be an orthonormal basis of $\su(N)$. 
For all $\alpha\geq 0$, the generator of $V_N^{(\alpha)}$ is given by
\[\frac{1}{2}\Delta^{(\alpha)}=\frac{1}{2}\left( \sum_{k=1}^{N^{2}-1}\L_{Y_{i}}^{2}+\alpha^{2}\L_{\frac{i}{N}I_{N}}^{2}\right),\]
and we call $V_{N}^{(\alpha)}$ the $\alpha$-Brownian motion on $\U(N)$.

For each $\alpha> 0$, the process $V_{N}^{(\alpha)}$ is naturally associated with the scalar product $\langle X,Y \rangle_{\u(N)}^{(\alpha)}$ on $\u(N)$. Indeed, let $K_N^{(\alpha)}$ be the linear Brownian motion on $\u(N)$ corresponding to this scalar product. It can be expressed as 
$K_N^{(\alpha)}=K_{N}^{0}+\frac{i\alpha}{N}B.$ Then the process $V_{N}^{(\alpha)}$
satisfies the stochastic differential equation
\begin{equation}\label{edsalpha}
dV_{N}^{(\alpha)}(t)=V_{N}^{(\alpha)}(t)dK_{N}^{(\alpha)}(t)-\frac{1}{2}\left(1+\frac{\alpha^{2}-1}{N^{2}}\right)V_{N}^{(\alpha)}(t)dt.
\end{equation}
In particular, $V_{N}^{(1)}$ has the same distribution as $U_{N}$.

The main feature of the Brownian motion on $\U(N)$ which we have used extensively in the proof of Theorem \ref{main theo} is that its generator commutes with all Lie derivatives. Since the Lie derivative in the direction of $i I_{N}$ commutes with all Lie derivatives, this is also the case for the generator of $V_N$ and of all the processes $V_{N}^{(\alpha)}$, $\alpha>0$.

Finally, following \cite{Biane}, one can check that for all $\alpha\geq 0$, the process $V_{N}^{(\alpha)}$ converges as $N$ tends to infinity to a free multiplicative Brownian motion.

Let us now define a modified version of the covariance $\sigma_T$.

\begin{defi}\label{sigmaalpha} With all the notation of Definition \ref{sigma}, we define, for all $\alpha\geq 0$,
$$\sigma_{T}^{(\alpha)}(f,g)= \int_{0}^{T} \tau(f'(u_{s}v_{T-s})g'(u_{s}w_{T-s}))+(\alpha^{2}-1)\tau(f'(u_{s}v_{T-s}))\tau(g'(u_{s}w_{T-s}))\; ds.$$
\end{defi}

Following step by step the proof of Theorem \ref{main theo}, one finds the following result.

\begin{theo}\label{main theo su} Let $T\geq 0$ be a real number. Let $n\geq 1$ be an integer. Let $f_{1},\ldots,f_{n}:\UC\to \R$ be $n$ functions of $C^{1,1}(\UC)$. Let us define a $n\times n$ real non-negative symmetric matrix by setting  $\Sigma_T^{(\alpha)}(f_1,\ldots,f_n)=(\sigma_{T}^{(\alpha)}(f_{i},f_{j}))_{i,j\in\{1,\ldots,n\}}$. Then, as $N$ tends to infinity, the following convergence of random vectors in $\R^{n}$ holds in distribution:
\begin{equation}
N\left(\tr f_{i}(V_{N}^{(\alpha)}(T)) - \E\left[\tr f_{i}(V_{N}^{(\alpha)}(T))\right]\right)_{i\in\{1,\ldots,n\}} \build{\longrightarrow}_{N\to\infty}^{(d)} {\mathcal N}(0, \Sigma_T^{(\alpha)}(f_1,\ldots,f_n)).
\end{equation}
\end{theo}

We leave the details to the reader, since every step can be adapted in a straightforward way. The only substantial change is in Lemma  \ref{trace}, which now will take the following form.

\begin{lem}\label{tracealpha} Let $(X_k)_{k\in \{1,\ldots,N^2\}}$ be an orthonormal basis of $(\u(N),\langle \cdot,\cdot\rangle_{\u(N)}^{(\alpha)})$. Let
$A,B$ be elements of $\M_N(\C)$. Then the following equality holds:
\begin{equation}
\label{coaltrace}
\sum_{k=1}^{N^2} \tr(AX_k) \tr(BX_k) =-\frac{1}{N^2} \left(\tr(AB)+(\alpha^{2}-1)\tr(A)\tr(B)\right).
\end{equation}
Assume that $(X_{1},\ldots,X_{N^{2}-1})$ form an orthonormal basis of $\su(N)$ endowed with the scalar product induced by $\langle \cdot,\cdot\rangle_{\u(N)}$. Then
\begin{equation}
\label{fragtrace}
\sum_{k=1}^{N^2-1} \tr(AX_k)\tr(BX_k) =-\frac{1}{N^{2}}\left(\tr(AB)-\tr(A) \tr(B)\right).
\end{equation}
\end{lem}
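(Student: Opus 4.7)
The plan is to reduce both identities to Lemma \ref{trace} by choosing convenient orthonormal bases. The crucial observation is that, since $\Tr(X^*)=-\Tr(X)=0$ for $X\in\su(N)$, the correction term $\frac{1-\alpha^2}{\alpha^2}\Tr(X^*)\Tr(Y)$ vanishes on $\su(N)\times\su(N)$. Consequently the scalar products $\langle\cdot,\cdot\rangle_{\u(N)}$ and $\langle\cdot,\cdot\rangle_{\u(N)}^{(\alpha)}$ agree on $\su(N)$, and for both of them $\su(N)$ is orthogonal to the center $\R\cdot iI_N$; a direct computation gives $\langle iI_N,iI_N\rangle_{\u(N)}=N^2$ and $\langle iI_N,iI_N\rangle_{\u(N)}^{(\alpha)}=N\cdot N+\frac{1-\alpha^2}{\alpha^2}(-iN)(iN)=N^2/\alpha^2$. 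A standard invariance argument (by expressing one orthonormal basis in another through an orthogonal change of basis matrix, as in the proof of Lemma \ref{trace}) shows that for a fixed scalar product the sum $\sum_k\tr(AX_k)\tr(BX_k)$ does not depend on the choice of orthonormal basis $(X_k)$; so in each case it suffices to verify the identity for one convenient choice.

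For (\ref{fragtrace}), I would extend the given orthonormal basis $(X_1,\ldots,X_{N^2-1})$ of $\su(N)$ into an orthonormal basis of $\u(N)$ for $\langle\cdot,\cdot\rangle_{\u(N)}$ by adjoining $X_{N^2}:=\frac{i}{N}I_N$, which has unit $\u(N)$-norm by the computation above. Lemma \ref{trace} then yields
\[\sum_{k=1}^{N^2}\tr(AX_k)\tr(BX_k)=-\frac{1}{N^2}\tr(AB),\]
while the contribution of the adjoined vector is $\tr(AX_{N^2})\tr(BX_{N^2})=(i/N)^2\tr(A)\tr(B)=-\frac{1}{N^2}\tr(A)\tr(B)$. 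Subtracting it gives (\ref{fragtrace}).

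For (\ref{coaltrace}) I would proceed analogously in the $\alpha$-metric: pick any orthonormal basis $(X_1,\ldots,X_{N^2-1})$ of $\su(N)$ and adjoin $X_{N^2}^{(\alpha)}:=\frac{i\alpha}{N}I_N$, which by the norm computation above has unit $\alpha$-norm and is orthogonal (for both scalar products) to $\su(N)$. By (\ref{fragtrace}) applied to the $\su(N)$ basis, the first $N^2-1$ terms of the sum contribute $-\frac{1}{N^2}\bigl(\tr(AB)-\tr(A)\tr(B)\bigr)$, while the last contributes $\bigl(i\alpha/N\bigr)^2\tr(A)\tr(B)=-\frac{\alpha^2}{N^2}\tr(A)\tr(B)$. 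Adding these two contributions gives
\[-\frac{1}{N^2}\bigl(\tr(AB)-\tr(A)\tr(B)\bigr)-\frac{\alpha^2}{N^2}\tr(A)\tr(B)=-\frac{1}{N^2}\bigl(\tr(AB)+(\alpha^2-1)\tr(A)\tr(B)\bigr),\]
which is exactly (\ref{coaltrace}). There is no real obstacle: everything is bookkeeping once the norm of $iI_N$ for the $\alpha$-scalar product has been computed and the splitting $\u(N)=\su(N)\oplus\R\cdot iI_N$ has been recognized to be orthogonal for both scalar products simultaneously.
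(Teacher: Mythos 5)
Your proof is correct, and since the paper leaves this lemma to the reader ("every step can be adapted in a straightforward way"), the natural benchmark is the proof of Lemma \ref{trace}, whose first identity is proved by applying Parseval's identity in $(\u(N),\langle\cdot,\cdot\rangle_{\u(N)})$ and then extending by $\C$-bilinearity over $\M_N(\C)=\u(N)\oplus i\u(N)$. A verbatim re-run of that argument in the $\alpha$-metric is messier than it looks: since $\langle A,X_k\rangle_{\u(N)}^{(\alpha)}$ has an extra $\Tr(A^*)\Tr(X_k)$ term, expanding the Parseval sum produces cross terms $\sum_k\tr(X_k)\tr(AX_k)$ and $\sum_k\tr(X_k)^2$ that must be evaluated separately. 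Your route sidesteps all of this: you exploit the orthogonal splitting $\u(N)=\su(N)\oplus\R\cdot iI_N$, valid for \emph{both} scalar products because the correction term $\frac{1-\alpha^2}{\alpha^2}\Tr(X^*)\Tr(Y)$ is killed by tracelessness, together with the basis-independence of $\sum_k\tr(AX_k)\tr(BX_k)$ (the $N^2\times N^2$ change-of-basis matrix between two orthonormal bases of a real inner product space is orthogonal, so the quadratic form in $(\tr(AX_k))_k$, $(\tr(BX_k))_k$ is invariant). This lets you pick the basis consisting of a fixed $\su(N)$-orthonormal basis plus the appropriately scaled central vector ($\frac{i}{N}I_N$ in the standard metric, $\frac{i\alpha}{N}I_N$ in the $\alpha$-metric), and the two claimed identities fall out of Lemma \ref{trace} and a one-line computation of the central contribution. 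Your norm computations $\langle iI_N,iI_N\rangle_{\u(N)}=N^2$ and $\langle iI_N,iI_N\rangle_{\u(N)}^{(\alpha)}=N^2/\alpha^2$ are correct, and the arithmetic $-\frac{1}{N^2}(\tr(AB)-\tr(A)\tr(B))-\frac{\alpha^2}{N^2}\tr(A)\tr(B)=-\frac{1}{N^2}(\tr(AB)+(\alpha^2-1)\tr(A)\tr(B))$ checks out. This is a complete and in fact rather cleaner derivation than the "adapt the old proof line by line" route the paper gestures at.
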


It is this modification which gives rise to the new covariance introduced in Definition \ref{sigmaalpha}.\\

\section{Joint fluctuations of the unitary Brownian motion at different times}
\label{sec multitime}

A natural generalization of our main result consists in considering several Brownian motions stopped at possibly 
different times.
The goal of this section is to establish an analogue of Theorem \ref{main theo} in this case.  In order to state the result, we define a new covariance function.

\begin{defi}\label{sigma multi} Let $(\A,\tau)$ be a $C^*$-probability space which carries three free multiplicative Brownian motions $u,v,w$ which are mutually free. Let $T_1,T_2\geq 0$ be real numbers. Let $f,g:\UC\to \R$ be two functions of $C^1(\UC)$. For all $s\in [0,T_1\wedge T_2]$, we set
$\sigma_{T_1,T_2,s}(f,g)=\tau(f'(u_{s}v_{T_1-s})g'(u_{s}w_{T_2-s}))$. Then, we define
$$\sigma_{T_1,T_2}(f,g)= \int_{0}^{T_1\wedge T_2} \sigma_{T_1,T_2,s}(f,g)\; ds =\int_{0}^{T_1\wedge T_2}  \tau(f'(u_{s}v_{T_1-s})g'(u_{s}w_{T_2-s}))\; ds.$$
\end{defi}

We have the following result.

\begin{theo}\label{theo multi}  Let $n\geq 1$ be an integer.  Let $T_1,\ldots,T_n\geq 0$ be real numbers. Let $f_{1},\ldots,f_{n}:\UC\to \R$ be $n$ functions of $C^{1,1}(\UC)$. Let us define a $n\times n$ real non-negative symmetric matrix by setting  $\Sigma_{T_1, \ldots,T_n}(f_1,\ldots,f_n)=(\sigma_{T_i,T_j}(f_{i},f_{j}))_{i,j\in\{1,\ldots,n\}}$. Then, as $N$ tends to infinity, the following convergence of random vectors in $\R^{n}$ holds in distribution:
\begin{equation}\label{main multi}
N\left(\tr f_{i}(U_{N}(T_i)) - \E\left[\tr f_{i}(U_{N}(T_i))\right]\right)_{i\in\{1,\ldots,n\}} \build{\longrightarrow}_{N\to\infty}^{(d)} {\mathcal N}(0, \Sigma_{T_1, \ldots,T_n}(f_1,\ldots,f_n)).
\end{equation}
\end{theo}

The proof of this result is very similar to the proof of Theorem \ref{main theo} and, as in the previous section, we simply point out the small differences between the two.

For the sake of convenience, let us assume  $T_1 \le \ldots \le T_n$. Let $f_{1},\ldots,f_{n}:\UC\to \R$ be $n$ functions of $C^{1,1}(\UC)$. We define for each $i\in\{1,\ldots,n\}$ a martingale indexed by $[0,T_n]$ by setting
$$ M_N^{f_i}(t) = \E(\tr f_i(U_N(T_i)) | \mathcal F_{N,t}).$$
Observe that the martingale $M_N^{f_i}$ is constant on the interval $[T_i,T_n]$.
Let us now define the vector-valued martingale $Q_N(t) = N\left( M_N^{f_i}(t)-M_N^{f_i}(0) \right)_{i \in \{1, \ldots,n\}}$, so that the left hand-side of \eqref{main multi} is equal to $Q_N(T_n)$. The proof of Theorem \ref{theo multi} relies on an analogue of Proposition \ref{main technical}, for which we introduce the following notation : for all $i,j\in\{1,\ldots,n\}$ with $i\leq j$ and all $s\in [0,T_i)$, we set

\[E_{N}^{f_i,f_j}(s,U)=N^2 \langle \nabla (P_{T_i-s}(\tr f_i))(U),\nabla (P_{T_j-s}(\tr f_j))(U) \rangle_{\u(N)}.\]
We state the following result for the two functions $f_1$ and $f_2$.

\begin{prop} \label{multi technical} With the notation introduced above, the following properties hold.
\begin{enumerate}
\item For all $t\in [0,T_2]$, the quadratic covariation of the martingales $NM^{f_1}_{N}$ and $NM^{f_2}_{N}$ is given by
$$\langle N M^{f_1}_{N}, N M^{f_2}_{N}\rangle_{t}=\int_{0}^{t \wedge T_1} E_{N}^{f_1,f_2}(s,U_{N}(s))\; ds.$$
\item Assume that $f_1$ and $f_2$ are Lipschitz continuous. Then for all $s\in [0,T_1)$ and all $U\in \U(N)$, 
$| E_{N}^{f_1,f_2}(s,U)|\leq \left(\Vert f_1 \Vert_{\rm Lip}+\Vert f_2 \Vert_{\rm Lip}\right)^2$. Moreover, if $f_1$ and $f_2$ belong to $C^{1}(\UC)$, then the following convergence holds:
$$\E[E_{N}^{f_1,f_2}(s,U_{N}(s))]\build{\longrightarrow}_{N\to\infty}^{} \sigma_{T_1,T_2,s}(f_1,f_2).$$
\item Assume that $f_1$ and $f_2$ belong to $C^{1,1}(\UC)$. Then the following estimate holds:
$$\sup_{s\in [0,T_1)} \Var(E_{N}^{f_1,f_2}(s,U_{N}(s)))=O(N^{-2}).$$
\end{enumerate}
\end{prop}

The proof of this Proposition is in no way different from the proof of Proposition \ref{main technical}. The unique novelty is the fact that $M_N^{f_1}$ is constant on the interval $[T_1,T_2]$ so that the quadratic covariation $\langle M_N^{f_1},M_N^{f_2}\rangle$ vanishes on this interval.

Then, one deduces Theorem \ref{theo multi} from Proposition \ref{multi technical} just as one deduces Theorem \ref{main theo} from Proposition \ref{main technical}.

Let us mention that, in the case where the functions $f_1,\ldots,f_n$ are polynomial, and given Theorem \ref{main theo}, the Gaussian character of the fluctuations in the case where the Brownian motions are stopped at different times is a consequence of the work of J. Mingo, R. Speicher and P. \'Sniady \cite{sof1,sof2} on the notion of second order freeness and its specialization to the case of unitary matrices. Their work also provides one with a covariance function and it could be interesting to investigate the relation between our expression of what we call $\sigma_{T_1,T_2}$ and theirs.

Another natural question which is answered by the theory of second order freeness is that of the asymptotic fluctuations of random variables of the form $\tr p(U_N(T_1),\ldots,U_N(T_k))$ where $p$ is a non-commutative polynomial. It seems more difficult, although not hopeless, to apply our techniques to such functionals. 

\section{Behaviour of the covariance for large time}
\label{diaco}

For any fixed $N$, the Markov process $(U_N(T))_{T \geq 0}$ converges in distribution, as $T$ goes to infinity, to its invariant measure, which is the Haar measure on $\U(N).$ In \cite{DE}, P.~Diaconis and S.~Evans established a central limit theorem for Haar distributed unitary random matrices. In this section, we relate our result to theirs by comparing the limit as $T$ tends to infinity of the covariance $\sigma_{T}$ with the
covariance which they have found.

\subsection{Statement of the result of convergence}
In order to state the result of Diaconis and Evans, we need to introduce some notation.

\begin{defi} \label{H1/2}
Let $\H^\frac{1}{2}(\UC)$ denote the space of functions that are square-integrable on $\UC$ and such that
$$ \Vert f\Vert^2_{\frac{1}{2}} := \frac{1}{16\pi^2}\int_{[0,2\pi]^2} \frac{\left|f(e^{i\varphi})-f(e^{i\theta})\right|^2}{\sin^2\left(\frac{\varphi-\theta}{2}\right)}\; d\phi d\theta <\infty.$$
We denote by $\langle\cdot, \cdot\rangle_{\frac{1}{2}}$ the inner product associated to this Hilbertian semi-norm.
\end{defi}

For all $f: \UC \rightarrow \C$ which is square-integrable and all $j \in \Z$, we denote by $a_j(f)= \frac{1}{2\pi} \int_\UC f(\xi) e^{-ij\xi}d\xi$ the $j$-th Fourier coefficient of $f$. One can check that $f \in H^{\frac{1}{2}}(\UC)$ if and only if $\sum_{j \in \Z} |j| |a_j(f)|^2$ is finite and that, in this case,
\[\Vert f\Vert^2_{\frac{1}{2}}=\sum_{j \in \Z} |j| |a_j(f)|^2.\]


The result of Diaconis and Evans states as follows.

\begin{theo}(5.1 in \cite{DE}) \label{tcl-haar}
For all $N \in \N,$ let $M_N$ be a $N \times N$ unitary matrix distributed according to the Haar measure 
on $\U(N).$ Let $n  \geq 1$ be an integer. For all $f_1, \ldots, f_n \in H^{\frac{1}{2}}(\UC)$, let  $\Sigma(f_1, \ldots, f_n) $ be the $n \times n $ real non-negative symmetric matrix defined
by $\Sigma(f_1, \ldots, f_n) = \left(\langle f_i,f_j\rangle_{\frac{1}{2}}\right)_{i,j= 1,\ldots,n}.$ 
As $N$ goes to infinity, the following convergence of random vectors in $\R^n$ holds in distribution:
$$
N\left(\tr f_{i}(M_N) - \E\left[\tr f_{i}(M_N)\right]\right)_{i\in\{1,\ldots,n\}} \build{\longrightarrow}_{N\to\infty}^{(d)} {\mathcal N}(0, \Sigma(f_1, \ldots, f_n)).
$$
\end{theo}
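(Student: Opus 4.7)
My plan is to follow the classical method of moments, as developed by Diaconis and Shahshahani. The key algebraic input is the identity: for any positive integers $p_1,\ldots,p_m,q_1,\ldots,q_n$ and any $N\geq \max(p_1+\cdots+p_m,\,q_1+\cdots+q_n)$,
\[
\E\!\left[\prod_{i=1}^m \Tr(M_N^{p_i})\prod_{j=1}^n\overline{\Tr(M_N^{q_j})}\right]=\E\!\left[\prod_{i=1}^m \sqrt{p_i}\,Z_{p_i}\prod_{j=1}^n\overline{\sqrt{q_j}\,Z_{q_j}}\right],
\]
where $(Z_k)_{k\geq 1}$ are independent standard complex Gaussians. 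Equivalently, for every $k\geq 1$ the vector $(\Tr(M_N),\Tr(M_N^2),\ldots,\Tr(M_N^k))$ converges in distribution to $(Z_1,\sqrt{2}Z_2,\ldots,\sqrt{k}Z_k)$ as $N\to\infty$. To prove this I would expand the traces of powers in terms of power-sum symmetric functions of the eigenvalues and integrate via the Weyl integration formula; a combinatorial reduction via Schur--Weyl duality (equivalently, the Weingarten calculus on $\U(N)$) then converts the computation into a count of pairs of permutations with prescribed cycle structure, and the identity drops out as an \emph{exact}---not merely asymptotic---statement once $N$ is large enough.

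Granting this, the central limit theorem for real trigonometric polynomials is immediate. If $f$ has only finitely many non-zero Fourier coefficients $(a_j(f))_{j\in\Z}$ with $a_{-j}=\overline{a_j(f)}$, then, using $\Tr(M_N^{-k})=\overline{\Tr(M_N^k)}$,
\[
N\tr f(M_N)-\E[N\tr f(M_N)]=\sum_{j\neq 0} a_j(f)\,\Tr(M_N^{j}),
\]
which, by the method of moments, converges in law to a centered real Gaussian of variance $\sum_{j\neq 0}|j||a_j(f)|^2=\Vert f\Vert^2_{\frac{1}{2}}$. Joint convergence for $f_1,\ldots,f_n$ follows by polarization, with covariance matrix $(\langle f_i,f_j\rangle_{\frac{1}{2}})_{i,j}$ as announced.

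To extend from trigonometric polynomials to arbitrary $f\in\H^{\frac{1}{2}}(\UC)$, I would use the truncation $f=f_K+r_K$ with $f_K(z)=\sum_{|j|\leq K}a_j(f)z^j$. The moment formula recalled in the introduction, $\E[\Tr(M_N^p)\overline{\Tr(M_N^q)}]=\delta_{p,q}\min(p,N)$, yields
\[
\Var\bigl(N\tr r_K(M_N)\bigr)=\sum_{|j|>K}\min(|j|,N)|a_j(f)|^2\leq \sum_{|j|>K}|j||a_j(f)|^2,
\]
which tends to $0$ as $K\to\infty$, uniformly in $N$. Combined with the Gaussian limit already obtained for each $f_K$ and the convergence $\Vert f_K\Vert_{\frac{1}{2}}^2\to \Vert f\Vert_{\frac{1}{2}}^2$, a standard three-epsilon estimate on characteristic functions concludes, and the multivariate case is handled identically by truncating coordinate by coordinate.

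The main obstacle is the Diaconis--Shahshahani moment identity itself: the depth lies in the fact that the equality with Gaussian moments is \emph{exact} for $N$ sufficiently large rather than merely asymptotic. Once this combinatorial/representation-theoretic input is in hand, the CLT for polynomials is automatic from the method of moments, and the passage to $\H^{\frac{1}{2}}(\UC)$ reduces to the clean variance bound displayed above. It is worth noting that Theorem \ref{main theo} suggests an alternative, more analytic route: since the Haar measure is the $T\to\infty$ limit of the heat-kernel measure, one could attempt to interchange the limits $N\to\infty$ and $T\to\infty$, provided one establishes the convergence $\sigma_T(f,g)\to\langle f,g\rangle_{\frac{1}{2}}$ as $T\to\infty$, which is precisely the statement announced at the end of the introduction.
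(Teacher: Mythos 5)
The paper does not prove Theorem \ref{tcl-haar} at all: it is quoted verbatim as Theorem 5.1 of Diaconis and Evans \cite{DE}, which is used as an external input against which the $T\to\infty$ limit of the covariance $\sigma_T$ is later compared. So there is no ``paper's own proof'' to match your attempt against.

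That said, your reconstruction is a faithful sketch of the argument that is actually given in \cite{DE} (building on Diaconis--Shahshahani): the exact-for-finite-$N$ Gaussian moment identity for $\bigl(\Tr(M_N^p)\bigr)_p$, obtained from Schur--Weyl duality, gives the CLT for real trigonometric polynomials by the method of moments, and the passage to general $f\in\H^{\frac12}(\UC)$ is by truncating the Fourier series and controlling the tail variance with the identity $\E\bigl[\Tr(M_N^{p})\overline{\Tr(M_N^{q})}\bigr]=\delta_{p,q}\min(p,N)$ together with a uniform-in-$N$ three-epsilon argument on characteristic functions. The variance bound $\Var\bigl(N\tr r_K(M_N)\bigr)\le\sum_{|j|>K}|j|\,|a_j(f)|^2$ is correct and is exactly what one needs; the joint (multivariate) case by polarization is also fine. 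Your closing observation --- that one could in principle recover the Haar CLT from the heat-kernel CLT by sending $T\to\infty$, using $\sigma_T(f,g)\to\langle f,g\rangle_{\frac12}$ --- is precisely the heuristic the authors discuss, but it is not turned into a proof in the paper either: the double limit $N\to\infty$, $T\to\infty$ is not interchanged rigorously, and Theorem \ref{cvtohaar} establishes only the convergence of covariances, not of the laws. As a free-standing proof sketch of the Diaconis--Evans theorem, though, what you wrote is sound.
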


In view of this result, it is natural to expect the covariance that we have introduced in Definition \ref{sigma} to converge, as $T$ tends to infinity, to the covariance given by the $H^{\frac{1}{2}}$-scalar product. This is what the following result expresses.

\begin{theo} \label{cvtohaar}
For all $n \geq 1$ and all $f_1, \ldots, f_n \in  H^{\frac{1}{2}}(\UC),$ 
$$ \Sigma_T(f_1, \ldots, f_n)\build{\longrightarrow}_{T\to\infty}^{} \Sigma(f_1, \ldots, f_n).$$
\end{theo}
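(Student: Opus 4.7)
The strategy is polarization followed by a density argument. By bilinearity of both $\sigma_T$ and $\langle\cdot,\cdot\rangle_{\frac{1}{2}}$, the matrix convergence $\Sigma_T(f_1,\ldots,f_n)\to\Sigma(f_1,\ldots,f_n)$ reduces via the polarization identity to the scalar statement
\[
\sigma_T(f,f)\build{\longrightarrow}_{T\to\infty}^{} \|f\|^2_{\frac{1}{2}}\qquad\text{for every } f\in H^{\frac{1}{2}}(\UC).
\]
The plan is to establish this first on the dense subspace of trigonometric polynomials and then transfer the conclusion to $H^{\frac{1}{2}}(\UC)$ by a uniform continuity argument.

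For the polynomial case, I would extend $\sigma_T$ bilinearly from $C^1(\UC,\R)$ to $C^1(\UC,\C)$. Since $(z^p)' = ipz^p$ for the derivative used in Definition \ref{sigma}, the problem reduces to computing the limit, as $T\to\infty$, of
\[
\sigma_T(e^{ip\theta},e^{iq\theta})=-pq\int_0^T \tau\!\left((u_sv_{T-s})^p(u_sw_{T-s})^q\right)\,ds,
\]
and showing that this tends to $|p|\,\mathbf{1}_{p+q=0}$. For this I would invoke the alternative description of $\sigma_T$ announced in Definition \ref{newsigma}, where these moments are expressed through the explicit infinite triangular linear ODE system of Lemma \ref{freeito} obtained by the free It\^o formula. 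Combined with the fact that $\tau(u_s^k)\to 0$ as $s\to\infty$ for every $k\neq 0$ (since, by \cite{Biane}, $\nu_s$ converges weakly to the Haar measure on $\UC$), the solution of this system, integrated from $0$ to $T$, should yield the claimed limit. As a sanity check, for $p=1,q=-1$ freeness of $u_s$ from $(v_{T-s},w_{T-s})$ and the known first moment $\tau(u_t)=e^{-t/2}$ give $\tau((u_sv_{T-s})(u_sw_{T-s})^{-1})=|\tau(v_{T-s})|^2=e^{-(T-s)}$, whose integral tends to $1=|p|$; a similar computation for $p=q=1$ yields $0$. Since a real-valued $f$ has Fourier coefficients with $a_{-p}=\overline{a_p}$, summing gives $\sigma_T(f,f)\to\sum_p|p||a_p|^2=\|f\|^2_{\frac{1}{2}}$ whenever $f$ is a trigonometric polynomial.

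The main obstacle is the extension from trigonometric polynomials to arbitrary $f\in H^{\frac{1}{2}}(\UC)$. The point here is to establish, uniformly in $T$ large enough, a continuity estimate of the form
\[
\sigma_T(f,f)\leq C\,\|f\|^2_{\frac{1}{2}},\qquad T\geq T_0,
\]
and this is precisely what the introduction announces: Definition \ref{newsigma} makes sense on $H^{\frac{1}{2}}(\UC)$ for $T$ large, through the Fourier-coefficient representation derived from Lemma \ref{freeito}. Granting such a bound, a standard three-epsilon argument finishes the job: given $f\in H^{\frac{1}{2}}(\UC)$ and $\epsilon>0$, pick a trigonometric polynomial $p$ with $\|f-p\|_{\frac{1}{2}}<\epsilon$, and write
\[
\bigl|\sigma_T(f,f)-\|f\|^2_{\frac{1}{2}}\bigr|\leq \bigl|\sigma_T(f-p,f+p)\bigr|+\bigl|\sigma_T(p,p)-\|p\|^2_{\frac{1}{2}}\bigr|+\bigl|\|p\|^2_{\frac{1}{2}}-\|f\|^2_{\frac{1}{2}}\bigr|;
\]
the first term is controlled by Cauchy--Schwarz and the uniform bound, the third by continuity of $\|\cdot\|_{\frac{1}{2}}$, and the middle term tends to $0$ by the polynomial case. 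The genuine difficulty is therefore quantitative: the differential system of Lemma \ref{freeito} must be analyzed not merely qualitatively to compute the monomial limit, but sharply enough to give the uniform-in-$T$ boundedness of $\sigma_T$ as a sesquilinear form on $H^{\frac{1}{2}}(\UC)$.
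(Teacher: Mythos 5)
Your overall architecture — polarize, reduce to trigonometric monomials via the Fourier-coefficient form of $\sigma_T$ and the quantities $\tau_{j,k}(T)=\int_0^T\tau((u_sv_{T-s})^j(u_sw_{T-s})^k)\,ds$, compute limits via the free-It\^o ODE system of Lemma \ref{freeito}, and then extend to all of $H^{\frac12}(\UC)$ by a uniform-in-$T$ continuity bound — is exactly the architecture of the paper's proof, and the identity $\lim_T\tau_{j,k}(T)=\delta_{j+k}/|j|$ is indeed what drives the convergence. Your sanity checks are also correct (the $j=1,k=-1$ computation reduces by traciality and freeness to $|\tau(v_{T-s})|^2=e^{-(T-s)}$, whose integral tends to $1$).

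However, you explicitly leave unproven the statement that you yourself single out as the crux: the uniform quantitative estimate on $\tau_{j,k}(T)$. That estimate is not a routine observation that can be "granted"; it is the technical core of the entire section (the paper's Lemma \ref{bdmom} and Proposition \ref{taujk}). Concretely, one needs two things beyond the qualitative limit $\tau_{j,k}(T)\to\delta_{j+k}/|j|$: (a) a bound $|\tau_{j,k}(T)|\lesssim (|j|+|k|)^{-1}$ up to terms that decay in $T$, valid for all $(j,k)$ and all $T\geq T_0$, so that $\sum_{j,k}|jk\,a_j(f)\,a_k(f)\,\tau_{j,k}(T)|<\infty$ for every $f\in H^{\frac12}$; and (b) the same bound uniform in $T$ so that dominated convergence (equivalently, your three-$\epsilon$ argument) applies. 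To obtain these, the paper first establishes $|\mu_k(T)|\leq e^{-|k|T(\frac12-\varepsilon)}$ via a contour-integral representation of the moments, and then runs a delicate induction on $|j|+|k|$ through the triangular ODE system, tracking the constants carefully (the choice $T_0=32$ is made precisely so that the geometric series arising in the induction stay under control). None of that appears in your write-up. A second, smaller point: the Cauchy–Schwarz step $|\sigma_T(f-p,f+p)|\leq C\|f-p\|_{\frac12}\|f+p\|_{\frac12}$ presumes either nonnegativity of the extended $\sigma_T$ on $H^{\frac12}$ or a direct operator bound from the $\tau_{j,k}$ estimates; both are available once (a)–(b) are in hand, but you should not invoke nonnegativity silently since the paper's positivity lemma is only stated on $C^1(\UC)$.
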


Let us emphasize that $\Sigma_T(f_1, \ldots, f_n)$ has only been defined so far for functions in $C^{1,1}(\UC)$. From this point on, we focus on extending the definition of the covariance to functions of the space $H^{\frac{1}{2}}(\UC)$ and proving  Theorem \ref{cvtohaar}.

\subsection{The main estimate} 
In the sequel, $(u_t)_{t \geq 0}, (v_t)_{t \geq 0}$ and $(w_t)_{t \geq 0}$ will be three multiplicative free Brownian motions, that are mutually free. 
For all $T \geq 0$ and all $k \in \Z,$ let us denote by $\mu_k(T)=\tau(u_T^k)$ the $k$-th moment of $u_T$. Recall that, since $u_T$ has the same law as $u_T^*$, one has, for all $k\in \Z$, the equality $\mu_k(T)=\mu_{-k}(T)$. For each $k\geq 1$, according to \cite{Biane}, $\mu_{k}(T)$ is given by
\begin{equation}\label{moments}
\mu_{k}(T)=e^{-\frac{kT}{2}}\sum_{l=0}^{k-1}\frac{(-T)^{l}}{l!} \binom{k}{l+1} k^{l-1}.
\end{equation}

\begin{lem} \label{bdmom}
For all $\varepsilon>0$, all $T\geq T_0(\varepsilon)=\frac{2}{\epsilon} \log(1+\frac{2}{\epsilon})$ and all $k \in \Z$, one has
$$ \vert \mu_k(T)\vert \leq e^{-|k|T\left(\frac{1}{2}- \varepsilon\right)}.$$
\end{lem}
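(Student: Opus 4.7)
The plan is to reduce the claim to a single scalar inequality in $T$ and $\varepsilon$ and then verify it directly. The symmetry $\mu_{-k}(T)=\mu_k(T)$ and the trivial case $k=0$ reduce the statement to $k\geq 1$. Moreover, for $\varepsilon\geq\frac{1}{2}$ the right-hand side of the desired bound is at least $1$, whereas $|\mu_k(T)|\leq 1$ since $\mu_k(T)$ is the $k$-th moment of the probability measure $\nu_T$; so the only nontrivial range is $k\geq 1$ and $\varepsilon<\frac{1}{2}$, on which I focus from now on.

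The key step is to rewrite the explicit expression \eqref{moments} as a contour integral. Pulling out the factor $1/k$ from $k^{l-1}$ gives $\mu_k(T)=\frac{e^{-kT/2}}{k}\,[z^{k-1}]\bigl((1+z)^k e^{-kTz}\bigr)$, and hence, for any $r>0$,
\[
\mu_k(T)=\frac{e^{-kT/2}}{2\pi ik}\oint_{|z|=r}\frac{(1+z)^k e^{-kTz}}{z^k}\,dz.
\]
(The same identity can also be obtained from \eqref{nut} by Lagrange inversion.) The bounds $|1+z|\leq 1+r$ and $|e^{-kTz}|\leq e^{kTr}$ on $|z|=r$ then give
\[
|\mu_k(T)|\leq\frac{e^{-kT/2}(1+r)^k e^{kTr}}{k\,r^{k-1}}.
\]
The saddle point of the integrand for $T$ large lies near $z=-1/T$, which dictates the choice $r=1/T$ and yields the clean bound
\[
|\mu_k(T)|\leq\frac{\bigl(e(T+1)\bigr)^k e^{-kT/2}}{kT}.
\]

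Comparing this with $e^{-kT(1/2-\varepsilon)}$ and taking logarithms (using $kT\geq 1$, which holds since $T_0(\varepsilon)\geq 4\log 5>1$ for $\varepsilon\leq\frac{1}{2}$) reduces everything to the single-variable inequality
\[
1+\log(T+1)\leq T\varepsilon\qquad\text{for all }T\geq T_0(\varepsilon).
\]
I would check this by noting that $\phi(T):=T\varepsilon-1-\log(T+1)$ has derivative $\varepsilon-1/(T+1)$, positive as soon as $T>1/\varepsilon-1$, a condition easily verified to hold throughout $[T_0(\varepsilon),\infty)$ when $\varepsilon<\frac{1}{2}$. The remaining boundary check $\phi(T_0(\varepsilon))\geq 0$ becomes transparent through the substitution $A=1+\frac{2}{\varepsilon}$, which gives $T_0(\varepsilon)=(A-1)\log A$ and $T_0(\varepsilon)\varepsilon=2\log A$; the inequality then reads $(A-1)\log A+1\leq A^2/e$, which holds at $A=5$ (that is, $\varepsilon=\frac{1}{2}$) by direct computation, while the difference between left- and right-hand sides has negative derivative on $A\geq 5$, finishing the verification.

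The main obstacle I anticipate is quantitative sharpness. A cruder estimate of the sum in \eqref{moments}, for instance going through the Bessel-type identity $\sum_{l\geq 0}x^l/(l!(l+1)!)=I_1(2\sqrt{x})/\sqrt{x}$, would produce a bound of order $e^{-kT/2+2k\sqrt{T}}$ and hence only a threshold of order $1/\varepsilon^2$, much weaker than the logarithmic $T_0(\varepsilon)$. The precise choice $r=1/T$ is what rescues the correct threshold: both estimates $|1+z|\leq 1+r$ and $|e^{-kTz}|\leq e^{kTr}$ are attained simultaneously at $z=-r$, so essentially nothing is lost relative to the exact saddle-point asymptotics of the integrand.
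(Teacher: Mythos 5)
Your proof is correct, and it uses the same central device as the paper: rewrite the moment formula \eqref{moments} as a contour integral $\mu_k(T)=\frac{e^{-kT/2}}{2\pi ik}\oint (1+1/z)^k e^{-kTz}\,dz$, bound the integrand on a circle, and exploit a good choice of radius. The one genuine difference is the radius. The paper takes $r=\epsilon/2$, which is precisely the choice for which the resulting bound $\frac{\epsilon}{2k}\,e^{kT\epsilon/2}(1+2/\epsilon)^k$ reduces to $e^{-kT(1/2-\epsilon)}$ \emph{immediately} using $\epsilon/(2k)\le 1$ and the defining inequality $T\ge T_0(\epsilon)=\frac{2}{\epsilon}\log(1+\frac{2}{\epsilon})$ — the threshold $T_0$ is visibly reverse-engineered from this radius. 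You take $r=1/T$ (motivated by the saddle point), which yields the cleaner intermediate bound $\frac{(e(T+1))^k e^{-kT/2}}{kT}$ but then requires a separate verification that $1+\log(T+1)\le T\epsilon$ on $[T_0(\epsilon),\infty)$, via the monotonicity of $\phi(T)=T\epsilon-1-\log(T+1)$ and the boundary check under the substitution $A=1+2/\epsilon$. Both routes close; the paper's is shorter because it bakes $T_0$ into the geometry, while yours is arguably more systematic (the radius is intrinsic to the integrand rather than to the target bound). One small slip in your closing remark: at $z=-r$ one has $|1+z|=1-r$, not $1+r$, so the two elementary bounds $|1+z|\le 1+r$ and $|e^{-kTz}|\le e^{kTr}$ are not attained at the same point — the first is attained at $z=r$, the second at $z=-r$. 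This does not affect the proof, only the heuristic comment about sharpness.
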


\begin{proof}
If $k=0$ or $\epsilon\geq \frac{1}{2}$, the inequality is trivial. Moreover, since $\mu_k(T)=\mu_{-k}(T)$, it suffices to prove the inequality for $k> 0$. So, let us assume that $\epsilon\leq \frac{1}{2}$ and $k \in \N^*$. It is easy to check that the expression \eqref{moments} of $\mu_{k}(T)$ is equivalent to the following:
$$ \mu_k(T) = \frac{e^{-\frac{kT}{2}}}{2ik\pi} \oint e^{-kTz} \left(1+ \frac{1}{z} \right)^k dz,$$
where we integrate over a closed path of index 1 around the origin of the complex plane.
If we choose as our contour the circle of radius $\frac{\epsilon}{2}$ centered at the origin,
we get 
$$ \mu_k(T) = \frac{e^{-\frac{kT}{2}}}{2ik\pi} \int_0^{2\pi} e^{-kT\frac{\varepsilon}{2} e^{i\t}} \left(1+ \frac{2}{\varepsilon e^{i\t}} \right)^k i\frac{\varepsilon}{2} e^{i\t}d\t,$$
so that, provided $T\geq T_0(\varepsilon),$
$$ \vert \mu_k(T)\vert \leq \frac{\epsilon}{2k}e^{-\frac{kT}{2}}  e^{kT\frac{\varepsilon}{2}} \left(1+ \frac{2}{\varepsilon} \right)^k  \leq e^{-kT\left(\frac{1}{2}- \varepsilon\right) },$$
as expected.
\end{proof}

We will denote by $T_0$ a real large enough such that for all
$T \geq T_0$ and all $k \in \Z$, the inequality  $\vert \mu_k(T)\vert \leq e^{-|k|\frac{T}{3}}$ holds. One can check that $31$ is large enough but we choose $T_0=32$ for reasons which will soon become apparent.

For all $j,k \in \Z$ and  $T >0,$ we define
\begin{equation}\label{deftaujk}
\tau_{j,k}(T) = \int_0^T \tau\left((u_s v_{T-s})^j(u_s w_{T-s})^k\right)ds.
\end{equation}

\begin{prop}\label{taujk} Set $T_0=32$. For all $T\geq T_0$ and all $(j,k)\neq (0,0)$, the following inequality holds:
\begin{equation}\label{main jk}
|\tau_{j,k}(T)| \leq 4 \frac{e^{-\frac{|j+k|}{4}T}}{|j|+|k|} + (|j|+|k|) T_0 e^{-\frac{|j|+|k|}{4}(T-T_0)}.
\end{equation}
Moreover, if $j\neq 0$, then
\begin{equation}\label{main j}
\left|\tau_{j,-j}(T)- \frac{1}{|j|}\right| \leq \frac{e^{-\frac{T}{4}}}{|j|}+ 2 |j| T_0 e^{-\frac{|j|}{2}(T-T_0)}.
\end{equation}
In particular, for all $(j,k)\neq (0,0)$, the following convergence holds :
\[\lim_{T \rightarrow \infty} \tau_{j,k}(T) = \delta_{j+k,0} \frac{1}{|j|}.\]
\end{prop}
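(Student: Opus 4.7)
The plan is to expand $\tau((u_sv_{T-s})^j(u_sw_{T-s})^k)$ into a finite linear combination of products of scalar moments of $u_s$, $v_{T-s}$ and $w_{T-s}$ using the mutual freeness of the three families, and then to integrate each resulting piece. Since $u_s$ is free from the algebra generated by $\{v_{T-s},w_{T-s}\}$ and $v_{T-s}$ is free from $w_{T-s}$, the recursive application of the centering identity $\tau(y_1\cdots y_n)=0$, valid whenever the $y_i$ are centered and alternate between free subalgebras, produces a decomposition of the form
\[\tau\bigl((u_sv_{T-s})^j(u_sw_{T-s})^k\bigr)=\sum_\alpha c_\alpha\,\mu_{a_\alpha}(s)\,\mu_{b_\alpha}(T-s)\,\mu_{c_\alpha}(T-s),\]
with finitely many integer-coefficient terms depending only on $(j,k)$. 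The indices obey conservation constraints forced by triangle inequalities: the total $u$-power of the original word is $j+k$ and the $v$- and $w$-weights are $|j|$ and $|k|$, forcing $|a_\alpha|\geq|j+k|$ and $|a_\alpha|+|b_\alpha|+|c_\alpha|\geq|j|+|k|$.

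To establish \eqref{main jk}, I would split $\int_0^T\,ds$ as $[0,T_0]\cup[T_0,T-T_0]\cup[T-T_0,T]$, the subsidiary case $T_0\leq T<2T_0$ being handled at once by the trivial bound $|\tau((u_sv_{T-s})^j(u_sw_{T-s})^k)|\leq 1$. On the middle region both $s$ and $T-s$ exceed $T_0$, so Lemma~\ref{bdmom} bounds every moment factor by $e^{-|p|t/3}$; integrating each exponential product over $s$ and exploiting the conservation constraints produces the first summand $\frac{4}{|j|+|k|}e^{-|j+k|T/4}$, with the factor $\frac{1}{4}$ in the exponent absorbing polynomial-in-$T$ factors coming from integrals of products of exponentials with comparable rates. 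On each boundary strip of length $T_0$, the moment in the ``short'' variable is estimated trivially by $1$ while the ``long'' variable still contributes exponential decay at rate at least $(|j|+|k|)/3$; multiplying by the strip length $T_0$ gives the second summand $(|j|+|k|)T_0\,e^{-(|j|+|k|)(T-T_0)/4}$.

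For the $k=-j$ case, one first exploits cyclic invariance together with the cancellation $u_s^{-1}u_s=1$ to rewrite the integrand as $\tau(v_{T-s}(u_sv_{T-s})^{|j|-1}(w_{T-s}^{-1}u_s^{-1})^{|j|-1}w_{T-s}^{-1})$. In the freeness expansion of this trace, the distinguished ``all-$B$-centered'' term, obtained by replacing every $v$ and every $w^{-1}$ by its mean $\mu_1(T-s)$, reduces the surviving $u$-product $u_s^{|j|-1}(u_s^{-1})^{|j|-1}$ to the identity and hence contributes exactly $\mu_1(T-s)^{2|j|}=e^{-|j|(T-s)}$. Integrating this leading term over $s\in[0,T]$ yields $(1-e^{-|j|T})/|j|$, which lies within $e^{-|j|T}/|j|\leq e^{-T/4}/|j|$ of $1/|j|$ and accounts for the first summand of \eqref{main j}. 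Every other term of the expansion carries at least one non-trivial moment factor $\mu_p(s)$ with $|p|\geq 1$, because a surviving $\tilde b$-group of size at least $2$ forces, by freeness, an extra centering of the $u$-factors; such terms are controlled by the splitting scheme of the previous paragraph and yield the second summand. The asymptotic $\tau_{j,k}(T)\to\delta_{j+k}/|j|$ is an immediate consequence of these two bounds.

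The main technical obstacle is the combinatorial bookkeeping of the freeness expansion: although each individual centering step is straightforward, the number of surviving contractions grows with $|j|+|k|$, and one must verify the conservation constraints carefully to ensure the decay rates appearing in the two bounds. Identifying the all-$B$-centered term as the \emph{unique} one giving a non-decaying contribution also requires care, since the naive guess $\mu_j(T-s)^2$ for the leading term is incorrect --- it integrates to $1/4$ instead of $1/2$ already for $j=2$. Once this is settled, the remaining integration estimates reduce to elementary calculus on products of exponentials.
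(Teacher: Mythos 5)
Your proposal pursues a genuinely different strategy from the paper. The paper derives a closed differential system for the $\tau_{j,k}$ via free It\^{o} calculus (Lemma \ref{freeito}), sets $\rho_{j,k}(T)=e^{\frac{|j|+|k|}{2}T}\tau_{j,k}(T)$, and proves \eqref{main jk}--\eqref{main j} by induction on $|j|+|k|$, integrating the resulting bound on $\dot\rho_{j,k}$; the induction neatly sidesteps any combinatorics, because at each step the only objects entering are $\tau_{j',k'}$ with $|j'|+|k'|<|j|+|k|$, already controlled. You instead propose to expand $\tau\bigl((u_sv_{T-s})^j(u_sw_{T-s})^k\bigr)$ directly into products of scalar moments and integrate term by term after a three-way split of $[0,T]$.

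That route runs into a concrete obstruction on the right-hand boundary strip $[T-T_0,T]$. There, $T-s\le T_0$, so the $v$- and $w$-moments (evaluated at $T-s$) admit no useful bound beyond $1$ and all the decay has to come from the $u$-moments (evaluated at $s\ge T-T_0\ge T_0$). For any single term $\prod_i\mu_{a_i}(s)\prod_j\mu_{b_j}(T-s)\cdots$, Lemma \ref{bdmom} yields a decay rate $\sum_i|a_i|/3$, and the conservation of the total $u$-power only forces $\sum_i|a_i|\ge|j+k|$, \emph{not} $|j|+|k|$ as you assert. When $j$ and $k$ have opposite signs, $|j+k|$ can be arbitrarily small compared with $|j|+|k|$, and the rate $|j+k|/3$ is then slower than the rate $(|j|+|k|)/4$ appearing in the second summand of \eqref{main jk}. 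Concretely, for $j=2$, $k=-1$ the integrand is the single product $\mu_1(T-s)\mu_2(T-s)\mu_1(s)$; bounding the $(T-s)$-factors by $1$ on $[T-T_0,T]$ gives $\int_{T-T_0}^T e^{-s/2}\,ds\le 2e^{-(T-T_0)/2}$, which already exceeds the right-hand side of \eqref{main jk} (about $\tfrac{4}{3}e^{-T/4}$) for $T$ around $2T_0$; the true integral is of order $Te^{-3T/2}$ and is rescued only by the cancellation inside $\mu_1\mu_2(T-s)=e^{-3(T-s)/2}(1-(T-s))$ near $T-s=1$. More generally the Lemma on p.~\pageref{freeito} shows $\tau_{j,k}(T)=e^{-\frac{|j|+|k|}{2}T}R_{j,k}(T)$ for $j+k\neq 0$, a decay rate $(|j|+|k|)/2$ that no product of \emph{absolute values} of moments reproduces when $j$ and $k$ have opposite signs; the estimate you need is therefore unreachable term by term.

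Two further points. The decomposition you posit (products of exactly three moments) is not the correct form of the freeness expansion: the moment--cumulant recursion produces products of an unbounded number of moments with integer but potentially large coefficients, and controlling $\sum_\alpha|c_\alpha|$ against the required decay is itself a non-trivial task that you acknowledge but do not carry out. Finally, the dispatch of $T_0\le T<2T_0$ by $|\tau_{j,k}(T)|\le T\le 2T_0$ does not work: for $|j|+|k|=1$ and $T$ near $2T_0$ the right-hand side of \eqref{main jk} is of order $e^{-8}$, far below $2T_0$. In short, the strategy is plausible in outline but the central estimate on the boundary strip fails for $j$, $k$ of opposite signs, and the necessary cancellations are exactly what the paper's inductive use of the differential system encodes automatically.
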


The proof of these estimates relies on a differential system satisfied by the functions $\tau_{j,k}$. This differential system is a consequence of the free It\^o calculus for free multiplicative Brownian motions. We state the form that we use, which is of interest on its own.

\begin{prop}\label{freeitobm}
Let $(u_t)_{t\geq 0}$ be a free multiplicative Brownian motion on some non-commutative $*$-probability space $(\mathcal A,\tau)$. Let $a_1,\ldots,a_n \in \mathcal A$ be random variables such that the two families $\{u_t : t\geq 0\}$ and $\{a_1,\ldots,a_n\}$ are free. Finally, choose $\epsilon_1,\ldots,\epsilon_n\in \{1,*\}$. Then
\begin{align*}
\frac{d}{dt} \tau(u_t^{\epsilon_1} a_1 \ldots u_t^{\epsilon_n} a_n) =& -\frac{n}{2} \tau(u_t^{\epsilon_1} a_1 \ldots u_t^{\epsilon_n} a_n) \\
&- \sum_{1\leq i<j \leq n} {\mathbbm{1}}_{\epsilon_i=\epsilon_j} \tau(a_i \ldots a_{j-1} u_t^{\epsilon_j}) \tau(a_j \ldots a_{i-1} u_t^{\epsilon_i}) \\
&+ \sum_{1\leq i<j \leq n} {\mathbbm{1}}_{\epsilon_i\neq \epsilon_j} \tau(a_i \ldots a_{j-1}) \tau(a_j \ldots a_{i-1}),
\end{align*}
where for all $1\leq i<j\leq n$, we have used the shorthands $a_i\ldots a_{j-1}$ for  $a_i u_t^{\epsilon_{i+1}} a_{i+1} \ldots u_t^{\epsilon_{j-1}} a_{j-1}$ and $a_j\ldots a_{i-1}$ for $a_j u_t^{\epsilon_{j+1}}a_{j+1} \ldots u_t^{\epsilon_n} a_nu_t^{\epsilon_1} a_1 \ldots u_t^{\epsilon_{i-1}} a_{i-1}$.
\end{prop}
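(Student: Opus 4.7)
The proposition is the free-probability analogue of the matrix It\^o formula \eqref{Ito}, and I would deduce it from free stochastic calculus. Realize $(u_t)_{t\geq 0}$ in a $C^*$-probability space as the solution of the free SDE $du_t=i\, u_t\, dS_t-\tfrac12 u_t\, dt$, where $(S_t)_{t\geq 0}$ is a standard free additive (self-adjoint) Brownian motion, free from the subalgebra generated by $a_1,\ldots,a_n$. From $d(u_t u_t^{-1})=0$ and the basic free It\^o rule $(dS_t)^2=dt$, one obtains $du_t^{-1}=-i\, dS_t\, u_t^{-1}-\tfrac12 u_t^{-1}\, dt$.

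\textbf{First-order terms.} Apply the free It\^o product formula to $F_t=u_t^{\epsilon_1}a_1\cdots u_t^{\epsilon_n}a_n$: the expansion of $dF_t$ consists of a first family indexed by $k\in\{1,\ldots,n\}$, where $u_t^{\epsilon_k}$ is replaced by $du_t^{\epsilon_k}$, plus a second family indexed by pairs $i<j$, where $u_t^{\epsilon_i}$ and $u_t^{\epsilon_j}$ are simultaneously replaced by their differentials. After applying $\tau$, the martingale parts (those linear in a single $dS_t$) vanish, while the drift parts of the first family each replace $u_t^{\epsilon_k}$ by $-\tfrac12 u_t^{\epsilon_k}\, dt$, summing to $-\tfrac{n}{2}\tau(F_t)\, dt$---this is the first line of the claim.

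\textbf{Quadratic contributions.} For each pair $(i,j)$ the central input is the free It\^o identity
\begin{equation*}
\tau\bigl(P\cdot dS_t\cdot Q\cdot dS_t\cdot R\bigr)=\tau(RP)\,\tau(Q)\,dt,
\end{equation*}
valid whenever $P,Q,R$ are free from the increment $dS_t$; adaptedness combined with the freeness hypothesis on $\{a_1,\ldots,a_n\}$ relative to $\{u_s:s\ge 0\}$ guarantees this. When $\epsilon_i=\epsilon_j$, the two imaginary prefactors multiply to $-1$, and after using cyclicity of $\tau$ to absorb the two stray $u_t^{\pm1}$ factors flanking the $dS_t$'s into the adjacent $a$-blocks, one finds the contribution $-\tau(a_i\cdots a_{j-1}u_t^{\epsilon_j})\,\tau(a_j\cdots a_{i-1}u_t^{\epsilon_i})\, dt$, matching the second line of the claim. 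When $\epsilon_i\ne\epsilon_j$, the prefactors multiply to $+1$, and cyclicity cancels a $u_t$ against a $u_t^{-1}$, yielding $+\tau(a_i\cdots a_{j-1})\,\tau(a_j\cdots a_{i-1})\, dt$, matching the third line. Summing over $i<j$ and adding the first-order contribution gives the claimed identity.

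\textbf{Main difficulty.} The conceptual ingredients---the free SDE for $u_t$, the product rule, and the identity $\tau(P\cdot dS_t\cdot Q\cdot dS_t\cdot R)=\tau(RP)\tau(Q)dt$---are standard from free stochastic calculus. The real work is purely combinatorial: keeping track of signs and of the cyclic reshuffling of $\tau$ in each of the four sign combinations, and verifying that the residual $u_t^{\pm1}$ factors accompanying each $dS_t$ end up in exactly the positions predicted by the statement. The freeness hypothesis is used precisely to license the two-trace factorisation in the key identity above.
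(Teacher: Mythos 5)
Your proof takes exactly the same route as the paper's: the identity is derived from Biane's free SDE $du_t = iu_t\,dS_t - \tfrac12 u_t\,dt$ together with the free It\^o product rule and the quadratic-variation identity from Biane--Speicher. The paper's own proof is essentially a one-line citation to these two references; your version fills in the combinatorics of the four sign cases correctly.
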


\begin{proof} In \cite{Biane}, P.~Biane showed that the free multiplicative Brownian motion
$(u_t)_{t\geq 0}$ satisfies the free stochastic differential equation $du_t=i u_t  dx_t -\frac{1}{2} u_t dt,$ where $(x_t)_{t\geq 0}$ is a free additive (Hermitian) Brownian motion. 
The identity above follows from this fact by free stochastic calculus, which  has been developed by P. Biane and R. Speicher and is exposed in \cite{BianeSpeicher}. For the reader not familiar with free stochastic calculus, and without entering into the details, let us explain how the computation goes. The analogy with usual It\^{o} calculus should be a helpful guide. 

The equation satisfied by $u_{t}$ implies that $u_{t}^{*}$ satisfies the equation $du_{t}^{*}=-idx_{t}u_{t}^{*}-\frac{1}{2}u_{t}^{*}dt$. The time derivative of $\tau(u_t^{\epsilon_1} a_1 \ldots u_t^{\epsilon_n} a_n)$ is computed formally by applying the formula
\begin{multline*}
d \left(\tau(u_t^{\epsilon_1} a_1 \ldots u_t^{\epsilon_n} a_n)\right) =\sum_{i=1}^{n} \tau(u_t^{\epsilon_1} a_1 \ldots du_{t}^{\epsilon_{i}} \ldots u_t^{\epsilon_n} a_n)\\
+\sum_{1\leq i< j \leq n} \tau(u_t^{\epsilon_1} a_1 \ldots du_{t}^{\epsilon_{i}} \ldots du_{t}^{\epsilon_{j}} \ldots u_t^{\epsilon_n} a_n),
\end{multline*}
together with the rules

\[\tau(a  \, dt)=\tau(a)dt \; , \;\; \tau(a \,dx_{t})=0 \; , \;\; \tau (a\, dt \,b \,dt)=\tau (a\, dt\, b \,dx_{t})=0 \; , \]
\[ \textrm{and } \tau(a\, dx_{t}\, b \, dx_{t})=\tau(a)\tau(b)dt\]
valid for all $a,b\in \A$, and using the invariance of $\tau$ under cyclic permutation of its arguments.
\end{proof}

\begin{lem}\label{freeito}
The family $(\tau_{j,k})_{(j,k)\in \Z^2}$
satisfies the following system of differential equations : 
\begin{align*}
\dot\tau_{j,k}(T) =  \mu_{j+k}(T) - \frac{|j|+|k|}{2} \tau_{j,k}(T) 
& - \sum_{l=1}^{|j|-1} (|j|-l) \mu_l(T) \tau_{{\rm sgn}(j)(|j|-l),k}(T) \\
& - \sum_{m=1}^{|k|-1} (|k|-m) \mu_m(T) \tau_{j,{\rm sgn}(k)(|k|-m)}(T),
\end{align*}
where $\dot\tau_{j,k}$ is the derivative of the function $T \mapsto \tau_{j,k}(T).$
\end{lem}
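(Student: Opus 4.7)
The plan is to differentiate the integral defining $\tau_{j,k}$ and identify the four contributions on the right-hand side. Setting $\psi_{j,k}(T,s)=\tau((u_sv_{T-s})^j(u_sw_{T-s})^k)$, Leibniz' rule gives
\[\dot\tau_{j,k}(T)=\psi_{j,k}(T,T)+\int_0^T\partial_T\psi_{j,k}(T,s)\,ds.\]
The boundary value at $s=T$ reduces via $v_0=w_0=1$ to $\tau(u_T^{j+k})=\mu_{j+k}(T)$, which accounts for the first summand in the system. The task is then to identify $\int_0^T\partial_T\psi_{j,k}(T,s)\,ds$ with the remaining three terms.

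For fixed $s$ only $V:=v_{T-s}$ and $W:=w_{T-s}$ depend on $T$, and by the multivariable chain rule applied to the map $(t_1,t_2)\mapsto\tau((u_sv_{t_1})^j(u_sw_{t_2})^k)$ one has $\partial_T\psi_{j,k}=\partial_T^V\psi_{j,k}+\partial_T^W\psi_{j,k}$, each piece being the derivative with respect to the time parameter of a single Brownian motion, the other being frozen. For the $V$-piece I would apply Proposition \ref{freeitobm} to $V$, treating $a:=u_s$ and $W$ (both free from $V$) as the auxiliary random variables $a_i$. A cyclic rotation of $(aV)^j(aW)^k$ places it in the canonical form $V^{\varepsilon_1}a_1\cdots V^{\varepsilon_{|j|}}a_{|j|}$ where every $\varepsilon_i$ equals $\mathrm{sgn}(j)$, the letters $a_i$ for $i<|j|$ are all equal to $a$, and the last letter $a_{|j|}$ contains the entire block $(aW)^{|k|}$ (adapted to the sign of $k$). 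Because every $\varepsilon_i$ shares the same sign, the mixed-sign sum in Proposition \ref{freeitobm} vanishes, leaving only the drift $-\tfrac{|j|}{2}\psi_{j,k}$ and the same-sign splits at pairs $1\le i<l\le|j|$.

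Each such split produces two partial traces. Since the distinguished position $|j|$ never falls inside the contiguous block $\{i,\ldots,l-1\}$, one of the traces reduces to $\tau((aV)^{l-i})$, which by the defining property of the free multiplicative Brownian motion equals $\mu_{l-i}(T)$ — freeness of $u_s$ and $v_{T-s}$ together with the convolution identity $\nu_s\boxtimes\nu_{T-s}=\nu_T$ force $u_sv_{T-s}$ to have the same $*$-distribution as $u_T$. The other trace, after using $a(Wa)^{|k|}=(aW)^{|k|}a$ and cyclicity to absorb the distinguished letter, reduces to $\psi_{\mathrm{sgn}(j)(|j|-l+i),k}(T,s)$. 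Separating the splits according to whether $l\le|j|-1$ or $l=|j|$ and reindexing by $p=l-i$, the two counts combine to $(|j|-1-p)+1=|j|-p$, giving
\[\partial_T^V\psi_{j,k}(T,s)=-\tfrac{|j|}{2}\psi_{j,k}(T,s)-\sum_{p=1}^{|j|-1}(|j|-p)\,\mu_p(T)\,\psi_{\mathrm{sgn}(j)(|j|-p),k}(T,s),\]
and a symmetric analysis for $W$ produces the matching identity. Integrating over $s\in[0,T]$ turns each $\psi$ into the corresponding $\tau$ (the moments $\mu_p(T)$ do not depend on $s$) and assembles the claimed system.

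The genuine obstacle is combinatorial rather than conceptual: one must track the cyclic position of the distinguished letter $a_{|j|}$ across every split so that the trace containing the $W$-block consistently lands in the complement piece, and verify that the counting of $(i,l)$ pairs collapses precisely to the coefficient $|j|-p$ and not an off-by-one variant. Sign management is handled uniformly by the observation that on each side ($V$ or $W$) every exponent of the relevant Brownian factor shares a common sign, so Proposition \ref{freeitobm}'s mixed-sign contributions never enter; this is the structural reason why the system in the statement involves only $|j|,|k|$ and $\mathrm{sgn}(j),\mathrm{sgn}(k)$.
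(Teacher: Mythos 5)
Your proposal is correct and takes essentially the same approach as the paper, which disposes of this lemma in a single sentence (``follows easily from an application of Proposition \ref{freeitobm} to the expression \eqref{deftaujk}''). You are simply filling in the details that the authors leave implicit: the Leibniz boundary term giving $\mu_{j+k}(T)$, the identification $\tau((u_sv_{T-s})^p)=\mu_p(T)$ via $\nu_s\boxtimes\nu_{T-s}=\nu_T$, the cyclic bookkeeping placing the $W$-block in a single ``fat'' letter $a_{|j|}$, and the count of $|j|-p$ same-sign splits producing a given moment index $p$ --- all of which check out.
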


\begin{proof} This differential system follows easily from an application of Proposition \ref{freeitobm} to the expression \eqref{deftaujk}. 
\end{proof}

Before we turn to the proof of Proposition \ref{taujk}, let us state some elementary properties of the functions $\tau_{j,k}$. For all $k\geq 0$, define the polynomial $P_{k}$ by the relation $\mu_{k}(T)=e^{-\frac{kT}{2}}P_{k}(T)$. For $k< 0$, define $P_{k}=P_{-k}$.

\begin{lem} For all $j,k\in \Z$, the function $\tau_{j,k}$ is real-valued and satisfies $\tau_{j,k}=\tau_{k,j}=\tau_{-j,-k}$. Moreover, there exists a family of polynomials $(R_{j,k})_{j,k\in \Z}$ with rational coefficients such that the following equality holds :
\begin{equation}\label{defR}
\forall j,k\in \Z \; , \;\; \tau_{j,k}(T)=\frac{\mathbbm 1_{j\neq 0}}{|j|} \delta_{j+k,0} + e^{- \frac{|j|+|k|}{2} T} R_{j,k}(T).
\end{equation}
These polynomials are characterized by the fact that for all $j,k\in \Z$, $R_{j,k}(0)=0$ and 
\begin{equation}\label{sysR}
\dot R_{j,k}={\mathbbm 1}_{jk\geq 0} P_{j+k}-\sum_{l=1}^{|j|-1} (|j|-l) P_{l}R_{{\rm sgn}(j)(|j|-l),k}- \sum_{m=1}^{|k|-1} (|k|-m) P_{m} R_{j,{\rm sgn}(k)(|k|-m)}.
\end{equation}
\end{lem}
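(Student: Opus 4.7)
My plan is to handle in turn the three assertions of the lemma: the symmetries of $\tau_{j,k}$, the polynomial structure given by \eqref{defR}, and the characterization of the $R_{j,k}$ through the initial-value problem.

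For the symmetries, I will combine invariance properties of the joint $*$-distribution of $(u,v,w)$ with the tracial property of $\tau$. The equality $\tau_{j,k}=\tau_{k,j}$ will follow from the exchange $v\leftrightarrow w$, which leaves the joint $*$-distribution unchanged (both are mutually free multiplicative Brownian motions, each free from $u$), after one reorders the resulting product inside the trace by cyclicity. For $\tau_{-j,-k}=\tau_{j,k}$, I would first observe that $\overline{\tau_{j,k}(T)}=\tau_{-j,-k}(T)$ by applying $\overline{\tau(X)}=\tau(X^*)$ together with a cyclic reordering of the resulting product of $*$-powers; it then remains to show $\tau_{-j,-k}=\tau_{j,k}$, which will follow by checking that $(u^*,v^*,w^*)$ has the same joint $*$-distribution as $(u,v,w)$. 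This last point can be verified from the three defining axioms of the free multiplicative Brownian motion, using in particular that the moments $\mu_k(T)$ are real by the explicit formula \eqref{moments}.

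For the polynomial structure, I would induct on $n=|j|+|k|$. The base cases are handled by direct computation: $\tau_{0,0}(T)=T$, so $R_{0,0}=T$, and for $k\neq 0$ the element $u_s w_{T-s}$ has distribution $\nu_T$ by the free convolution semigroup property of the laws $(\nu_t)_{t\geq 0}$, so $\tau_{0,k}(T)=T\mu_k(T)$ and $R_{0,k}=TP_k$, a polynomial with rational coefficients by \eqref{moments}. For the inductive step, I would use Lemma \ref{freeito}, which expresses $\dot\tau_{j,k}$ linearly in $\mu_{j+k}$ and in a finite collection of $\tau_{j',k'}$ all of which satisfy $|j'|+|k'|<n$. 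By the induction hypothesis, each such $\tau_{j',k'}$ is of the required form with a polynomial $R_{j',k'}$; plugging this back in and integrating the resulting first-order linear ODE in $\tau_{j,k}$ yields that $\tau_{j,k}$ is itself of the required form with a polynomial $R_{j,k}$ having rational coefficients.

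The heart of the argument, and where I expect the main technical obstacle, is deriving the explicit ODE \eqref{sysR}. After substituting $\tau_{j,k}=c_{j,k}+e^{-(|j|+|k|)T/2}R_{j,k}$ (with $c_{j,k}=\mathbbm{1}_{j\neq 0}|j|^{-1}\delta_{j+k,0}$) and $\mu_l=e^{-|l|T/2}P_l$ into the equation of Lemma \ref{freeito} and extracting the coefficient of $e^{-(|j|+|k|)T/2}$, one obtains $\dot R_{j,k}$ as the two $P_l R_{\bullet,\bullet}$-sums of \eqref{sysR} plus an ``exponential remainder'' comprising the $\mu_{j+k}$ contribution and the $c$-factors coming from the ansatz inside each sum. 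Showing that this remainder collapses to $\mathbbm{1}_{jk\geq 0}P_{j+k}$ requires a short case analysis on the sign of $jk$: when $jk\geq 0$ the exponential prefactor of the $\mu_{j+k}$ term is trivial and every $c$-factor vanishes because $\mathrm{sgn}(j)(|j|-l)+k=0$ would force $\mathrm{sgn}(k)=-\mathrm{sgn}(j)$; when $jk<0$, splitting according to whether $|j|>|k|$, $|j|<|k|$, or $|j|=|k|$, exactly one $c$-factor survives (from the $l$-sum, the $m$-sum, or the constant term $-\tfrac{|j|+|k|}{2}c_{j,k}$, respectively) and precisely cancels the exponential coming from $\mu_{j+k}$. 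Once \eqref{sysR} is in hand, the characterization is immediate: the system is triangular in $n=|j|+|k|$, so each equation is a linear first-order ODE with a polynomial forcing term in $T$, and its solution is determined by its value at $T=0$, which is itself read off from $\tau_{j,k}(0)=0$ together with \eqref{defR}.
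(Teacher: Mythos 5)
Your proposal is correct and follows essentially the same route as the paper's (quite terse) argument: the symmetries $\tau_{j,k}=\tau_{k,j}=\tau_{-j,-k}$ come from the invariance of the joint distribution of $(u,v,w)$ under $v\leftrightarrow w$ and under $*$, combined with traciality and cyclic reordering, and the polynomial structure is obtained by substituting the ansatz \eqref{defR} into the triangular ODE system of Lemma \ref{freeito} and inducting on $|j|+|k|$. Where the paper simply says the $R_{j,k}$ are "easily checked" to satisfy \eqref{sysR}, you correctly identify and carry out the sign case analysis ($jk\geq 0$ versus $jk<0$, the latter split by $|j|\gtrless|k|$ or $|j|=|k|$) that shows the exponential remainders collapse to ${\mathbbm 1}_{jk\geq 0}P_{j+k}$; this is the genuine content hidden behind the paper's brevity, and you have it right.
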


\begin{proof} The equalities $\tau_{j,k}=\overline{\tau_{-j,-k}}=\tau_{k,j}$ follow from the definition of $\tau_{j,k}$, using the unitarity of $u,v,w$, the traciality of $\tau$, and the fact that the families $(u,v,w)$ and $(u,w,v)$ have the same joint distribution. The fact that $\tau_{j,k}$ is real-valued can be proved by induction using the differential system stated in Lemma \ref{freeito}, or directly using the definition and the fact that $(u,v,w)$ and $(u^{*},v^*,w^*)$ have the same distribution. 

The functions $R_{j,k}$ defined by \eqref{defR} are easily checked to satisfy the differential system \eqref{sysR} and, by induction, to be polynomial.
\end{proof}

\begin{proof}[Proof of Proposition \ref{taujk}]
Since the differential equation for $\tau_{j,k}$ expressed by Lemma \ref{freeito} involves only indices $(j',k')$ such that $|j'|+|k'| \leq |j|+|k|,$ we will prove the conjunction of \eqref{main jk} and \eqref{main j} by induction on $|j|+|k|$. It is understood that $k=-j$ in \eqref{main j}.

The symmetry properties of $\tau_{j,k}$ allow us to restrict ourselves to the two cases where $j,k\geq 0$ and $j> 0,\,k<0$. We may also assume that $j+k\geq 0$.

The smallest possible value of $|j|+|k|$ is $1$. So, we start with $\tau_{1,0}(T) = T \mu_1(T)=Te^{-\frac{T}{2}}$, which is smaller than $e^{-\frac{T}{4}}$ for $T$ larger than $T_{0}$. Hence, if $ |j|+|k|=1$ and $T\geq T_{0}$, then $|\tau_{j,k}(T)| \leq e^{-\frac{T}{4}}$. This proves the result when $|j|+|k|=1$.

Let us consider now $j$ and $k$ and assume that \eqref{main jk} and \eqref{main j} have been proved for all $j',k'$ such that $|j'|+|k'|<|j|+|k|$. Let us first assume that $j+k\neq 0$. In this case, define
\[ \rho_{j,k}(T) = e^{\frac{|j|+|k|}{2}T} \tau_{j,k}(T).\]
Then Lemmas \ref{bdmom} and \ref{freeito} and the induction hypothesis imply the inequality
\begin{align*}
\left\vert  \dot\rho_{j,k}(T) \right\vert  \leq & e^{\frac{|j|+|k|}{2}T} e^{-\frac{|j+k|}{3}T}   + 4 e^{\frac{|j|+|k|}{2}T}\sum_{l=1}^{|j|-1} (|j|-l) e^{-l\frac{T}{3}}  \frac{e^{-|{\rm sgn}(j)(|j|-l)+k|\frac{T}{4}}}{|j|-l+|k|}\\
& + (|j|+|k|-1) T_0 e^{\frac{|j|+|k|}{4}(T+T_0)} \sum_{l=1}^{|j|-1} (|j|-1) e^{-l \frac{T}{3}} e^{l \frac{T-T_0}{4}} \\
& + 4 e^{\frac{|j|+|k|}{2}T} \sum_{m=1}^{|k|-1} (|k|-m) e^{-m\frac{T}{3}}  \frac{e^{-|j+{\rm sgn}(k)(|k|-m)|\frac{T}{4}}}{|j|+|k|-m}\\
& + (|j|+|k|-1) T_0 e^{\frac{|j|+|k|}{4}(T+T_0)} \sum_{m=1}^{|k|-1} (|k|-1) e^{-m \frac{T}{3}} e^{m \frac{T-T_0}{4}}.
\end{align*}
Since $|j|-l\leq |j|-l+|k|$, $|k|-m\leq |j|+|k|-m$ and $e^{-l \frac{T_0}{4}}\leq 1$, we find
\begin{align}
|\dot\rho_{j,k}(T)|\leq e^{\frac{|j|+|k|}{2}T} e^{-\frac{|j+k|}{3}T} &+ 4 e^{\frac{|j|+|k|}{2}T} \sum_{l=1}^{|j|-1} e^{-l \frac{T}{3}} e^{-|{\rm sgn}(j)(|j|-l)+k|\frac{T}{4}}\nonumber \\
& + 4 e^{\frac{|j|+|k|}{2}T} \sum_{m=1}^{|k|-1} e^{-m \frac{T}{3}} e^{-|j+{\rm sgn}(k)(|k|-m)|\frac{T}{4}}\nonumber \\
& + 2  (|j|+|k|-1)^2 T_0 e^{\frac{|j|+|k|}{4}(T+T_0)} \sum_{l=1}^\infty e^{-l \frac{T}{12}}.
\label{estim jk}
\end{align}

If we are in the case where $j,k\geq 0$, then we obtain immediately the estimate
\begin{multline}\label{j>0k>0}
|\dot\rho_{j,k}(T)| \leq e^{\frac{|j|+|k|}{2}T}\\
\left(e^{-\frac{|j+k|}{3}T} +  \frac{e^{-\frac{T}{12}}}{1-e^{-\frac{T}{12}}}\left( 8 e^{-\frac{|j+k|}{4}T} + 2 (|j|+|k|-1)^2 T_0 e^{-\frac{|j|+|k|}{4}(T-T_0)} \right)\right).
\end{multline}
In the case where $j> 0$ and $k<0$, the computation is slightly more complicated. In this case, let us also assume that $j+k>0$, as we have indicated that it is possible to do. 
Then the estimation of the sum over $m$ in \eqref{estim jk} is the same as before, since 
$j+{\rm sgn}(k)(|k|-m)$ is positive for all values of $m$. However, the sign of ${\rm sgn}(j)(|j|-l)+k$ now depends on $l$. Thus, we bound the first sum over $l$ by
\[ e^{-\frac{|j+k|}{4}T} \sum_{l=1}^{j+k} e^{-l\frac{T}{12}} + \sum_{l=j+k+1}^{+\infty} e^{-l\frac{T}{3}} e^{-(l-(j+k))\frac{T}{4}}.\]
In the first term, we could actually have $e^{-l\frac{T}{3}}$ instead of $e^{-l \frac{T}{12}}$ but we are not seeking any optimality. In the second term, we write
\[e^{-(l-(j+k))\frac{T}{4}} = e^{-(2l-(j+k))\frac{T}{4}} e^{l\frac{T}{4}} \leq e^{-(j+k)\frac{T}{4}} e^{l\frac{T}{4}},\]
and we find that the first sum over $l$ in \eqref{estim jk} is bounded by $2e^{-\frac{|j+k|}{4}T}\frac{e^{-\frac{T}{12}}}{1-e^{-\frac{T}{12}}}$. Finally, we have established that, when $j>0,k<0$ and $j+k>0$,
\begin{multline*}
 |\dot\rho_{j,k}(T)| \leq e^{\frac{|j|+|k|}{2}T}\\
\left(e^{-\frac{|j+k|}{3}T} +  \frac{e^{-\frac{T}{12}}}{1-e^{-\frac{T}{12}}}\left( 12\;  e^{-\frac{|j+k|}{4}T} + 2(|j|+|k|-1)^2 T_0 e^{-\frac{|j|+|k|}{4}(T-T_0)} \right)\right).
\end{multline*}
In view of \eqref{j>0k>0}, the last estimate holds for all values of $j$ and $k$. Our choice of $T_0$ guarantees that for $T\geq T_0$, the inequalities
\[ e^{-\frac{T}{12}} + 12 \frac{e^{-\frac{T}{12}}}{1-e^{-\frac{T}{12}}} \leq 1 \; \; \; \mbox{ and } \; \; \; \frac{e^{-\frac{T}{12}}}{1-e^{-\frac{T}{12}}} \leq \frac{1}{8}\] 
hold. Hence, we find 
\[|\dot\rho_{j,k}(T)| \leq e^{\frac{|j|+|k|}{2}T} \left(  e^{-\frac{|j+k|}{4}T}+
\frac{1}{4}(|j|+|k|-1)^2 T_0 e^{-\frac{|j|+|k|}{4}(T-T_0)}
\right).\]
Integrating the last inequality from $T_0$ on and using the fact that $\frac{|j|+|k|}{2}-\frac{|j+k|}{4}\geq \frac{|j|+|k|}{4}$, we find 
\[|\rho_{j,k}(T)|\leq T_0 e^{\frac{|j|+|k|}{2}T_0} + e^{\frac{|j|+|k|}{2}T} \left(  4 \;  \frac{e^{-\frac{|j+k|}{4}T}}{|j|+|k|}+ (|j|+|k|-1) T_0 e^{-\frac{|j|+|k|}{4}(T-T_{0})}\right),\]
from which it follows immediately that
\[|\tau_{j,k}(T)|\leq 4\;  \frac{e^{-\frac{|j+k|}{4}T}}{|j|+|k|} + (|j|+|k|) T_0 e^{-\frac{|j|+|k|}{4}(T-T_0)},\]
which is the expected equality.

Let us now treat the case where $k=-j$. As before, we can assume that $j>0$. Setting 
$\rho_j(T) = e^{|j|T}\left(\tau_{j,-j}(T) -\frac{1}{|j|}\right)$, we find, using the same estimates as before, that
\[\left|\dot \rho_j(T)\right| \leq 8 e^{|j|T} \sum_{l=1}^\infty \frac{1}{2} e^{-l \frac{7T}{12}} + 2 (2|j|-1)^2 e^{\frac{|j|}{2} (T+T_0)} T_0 \sum_{l=1}^\infty e^{-l\frac{T}{12}}.\]   
It follows that
\[\left|\rho_j(T)\right| \leq T_0 e^{|j|T_0} + e^{|j|T}\frac{e^{-\frac{T}{2}}}{2\left(|j|-\frac{1}{2}\right)} + (2|j|-1) T_0 e^{\frac{|j|}{2} (T+T_0)},\]
so that
\[\left| \tau_{j,-j}(T)-\frac{1}{|j|} \right| \leq \frac{e^{-\frac{T}{2}}}{|j|} + 2 |j| T_0 e^{-\frac{|j|}{2} (T-T_0)},\]
which is the expected inequality. This concludes the proof.
\end{proof}

\subsection{Extension of the definition of the covariance}
\begin{prop}\label{convcov}
Let $f \in \H^{\frac{1}{2}}(\UC)$ be real-valued. The following properties hold.\\
\indent 1. For all $T> T_{0}$, $\displaystyle \sum_{j,k \in \Z} |jka_j(f)a_k(f) \tau_{j,k}(T)| <\infty.$\\
\indent 2. $\displaystyle  \lim_{T\to \infty} \sum_{j,k \in \Z} jka_j(f)a_k(f) \tau_{j,k}(T)  =- \Vert f\Vert^2_{\frac{1}{2}}.$
\end{prop}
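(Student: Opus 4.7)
The plan is to exploit the two-piece bound on $|\tau_{j,k}(T)|$ from Proposition \ref{taujk} together with the identification $\|f\|_{\frac{1}{2}}^2 = \sum_{j \in \Z} |j||a_j(f)|^2$, which naturally suggests working with the $\ell^2$ sequence $b_j := \sqrt{|j|}\,|a_j(f)|$, of norm $\|b\|_2 = \|f\|_{\frac{1}{2}}$.

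For part (1), I would write $|\tau_{j,k}(T)| \le A_{j,k}(T) + B_{j,k}(T)$ with $A_{j,k}(T) = 4e^{-|j+k|T/4}/(|j|+|k|)$ and $B_{j,k}(T) = (|j|+|k|) T_0 e^{-(|j|+|k|)(T-T_0)/4}$. For the $A$-term, the AM--GM inequality $|jk|/(|j|+|k|) \le \tfrac12 \sqrt{|jk|}$ yields $|jk\, a_j a_k| A_{j,k}(T) \le 2 b_j b_k e^{-|j+k|T/4}$; grouping by $n := j+k$ and invoking Cauchy--Schwarz gives $(b\ast b)(n) \le \|b\|_2^2 = \|f\|_{\frac{1}{2}}^2$, and $\sum_n e^{-|n|T/4}$ is a convergent geometric series. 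For the $B$-term, I would split $|jk|(|j|+|k|) = j^2|k| + |j|k^2$, so that the double sum reduces to a product of one-variable sums $S_2(T) := \sum_j j^2|a_j(f)|e^{-|j|(T-T_0)/4}$ and $S_1(T) := \sum_k |k||a_k(f)|e^{-|k|(T-T_0)/4}$. Each of these is controlled, via Cauchy--Schwarz against the weight $|j|$, by $\|f\|_{\frac{1}{2}}$ times a convergent exponential tail as soon as $T > T_0$.

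For part (2), the guiding principle is that $\tau_{j,k}(T)$ vanishes in the limit unless $k = -j$, in which case it tends to $1/|j|$. I would split the (now absolutely convergent) sum into a diagonal part $-\sum_{j \neq 0} j^2 |a_j(f)|^2\, \tau_{j,-j}(T)$ and an off-diagonal part, using that $a_{-j}(f) = \overline{a_j(f)}$ by reality of $f$. For the diagonal, write $\tau_{j,-j}(T) = 1/|j| + r_j(T)$ with $|r_j(T)| \le e^{-T/4}/|j| + 2|j|T_0 e^{-|j|(T-T_0)/2}$ from \eqref{main j}; the main term gives exactly $-\sum_{j \neq 0} |j||a_j(f)|^2 = -\|f\|_{\frac{1}{2}}^2$, and the remainder is killed by dominated convergence, the majorant being $|j||a_j(f)|^2$ after absorbing polynomial factors in $|j|$ into the exponential evaluated at $T=T_0+1$. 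For the off-diagonal part I would recycle the two bounds of part (1): the $A$-contribution becomes $2\sum_{n \neq 0} e^{-|n|T/4} (b\ast b)(n)$, which tends to $0$ by dominated convergence (with dominating summable sequence $\|b\|_2^2 e^{-|n|T_0/4}$), while the $B$-contribution is dominated by $2T_0 S_1(T) S_2(T)$, each factor going to $0$ by the same dominated-convergence argument.

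The main obstacle is bookkeeping: several terms carry different exponential rates, and one has to check at each step that a uniform (in $T$ large) summable dominating sequence is in force. The one structural ingredient that makes everything fit together is the repeated use of Cauchy--Schwarz against the weight $|j|$ matching the $H^{\frac{1}{2}}$ norm: it is what lets each polynomial factor $|j|^m |a_j(f)|$ be absorbed into $\|f\|_{\frac{1}{2}}$ times a pure exponential remainder, sidestepping the fact that $H^{\frac{1}{2}}$ by itself controls no higher moment of the Fourier coefficients.
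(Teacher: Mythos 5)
Your proposal is correct and follows essentially the same route as the paper: split $|\tau_{j,k}(T)|$ using the two-piece estimate of Proposition \ref{taujk}, control the first piece via AM--GM followed by a Cauchy--Schwarz convolution bound $(b*b)(n) \le \|b\|_2^2 = \|f\|_{1/2}^2$ after grouping by $j+k$, factor the second piece and bound each one-variable sum by Cauchy--Schwarz against the weight $|j|$, and finish part (2) by dominated convergence from the pointwise limit $\tau_{j,k}(T)\to \delta_{j+k}/|j|$. Your diagonal/off-diagonal bookkeeping in part (2) simply spells out the dominated-convergence step that the paper leaves implicit.
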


\begin{proof} Choose an integer $n\geq 1$. Then for all $T\geq T_0$, Proposition \ref{taujk} implies
\begin{align*}
\sum_{|j|,|k|\leq n} |jka_j(f)a_k(f) \tau_{j,k}(T)|  &\leq \sum_{|j|,|k|\leq n} \frac{4 |jk|}{|j|+|k|} |a_j(f)a_k(f)| e^{-\frac{|j+k|}{4}T} \\
& + T_0 \sum_{ |j|,|k|\leq n}  |jk| (|j|+|k|)   |a_j(f)a_k(f)| e^{-\frac{|j|+|k|}{4}(T-T_0)}\\
& \leq  2 \sum_{|j|,|k|\leq n} \sqrt{|jk|}  |a_j(f)a_k(f)| e^{-\frac{|j+k|}{4}T}  \\
& +2 \sum_{ |j|,|k|\leq n} |j|^2 |a_j(f)| e^{-\frac{|j|}{4}(T-T_0)} |k|^2 |a_k(f)| e^{-\frac{|k|}{4}(T-T_0)} \\
&\hskip -4.5cm \leq  2 \sum_{l \in \Z} e^{-|l| \frac{T}{4}} \sum_{|j|,|k|\leq n, j+k=l} \sqrt{|jk|}  |a_j(f)a_k(f)| +2\left( \sum_{ |j|\leq n} |j|^2 |a_j(f)| e^{-\frac{|j|}{4}(T-T_0)} \right)^2 \\
&\hskip -4.5cm \leq \|f\|^2_{\frac{1}{2}} \left(2\sum_{l \in \Z} e^{-|l| \frac{T}{4}} +2  \sum_{j\in \Z} |j|^3 e^{-\frac{|j|}{2}(T-T_0)}\right). 
\end{align*}
The first assertion follows. The second is a consequence of the second statement in Proposition \ref{taujk} and the theorem of dominated convergence.
\end{proof}

Proposition \ref{convcov} above allows us to give a new definition of the covariance $\sigma_T$ when $T$ is large enough.

\begin{defi} \label{newsigma}
For all $T > T_0$ and all $f \in \H^{\frac{1}{2}}(\UC),$
we define 
$$ \sigma_T(f,f) = - \sum_{j,k \in \Z} jka_j(f)a_k(f) \tau_{j,k}(T).$$
\end{defi}

\begin{lem} Let $f$ be a function of $C^{1,1}(\UC)$. For all $T > T_0$, the two definitions (Definition \ref{sigma} and Definition \ref{newsigma}) of $\sigma_T(f,f)$ coincide.
\end{lem}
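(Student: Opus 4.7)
The plan is to expand $f'$ into its Fourier series, substitute into the integrand of Definition \ref{sigma}, and interchange summation with $\tau$ and the integral over $s$. Writing $f(z) = \sum_{j \in \Z} a_j(f) z^j$, the derivative on $\UC$ is formally $f'(z) = i\sum_{j \in \Z} j a_j(f) z^j$; once this expansion is valid in norm in $\A$ for every unitary element of $\A$, substituting and using the continuity of $\tau$ yields
\[
\tau\bigl(f'(u_s v_{T-s}) f'(u_s w_{T-s})\bigr) = -\sum_{j,k \in \Z} jk\, a_j(f) a_k(f)\, \tau\bigl((u_s v_{T-s})^j (u_s w_{T-s})^k\bigr).
\]
Integrating over $s \in [0,T]$ and recognizing $\tau_{j,k}(T)$ from \eqref{deftaujk} produces $\sigma_T(f,f) = -\sum_{j,k} jk\, a_j(f) a_k(f)\, \tau_{j,k}(T)$, which is Definition \ref{newsigma}.

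The key analytic input is the absolute summability $\sum_{j \in \Z} |j a_j(f)| < \infty$ for $f \in C^{1,1}(\UC)$. Since $f'$ is Lipschitz, its distributional derivative $f''$ belongs to $L^\infty(\UC) \subset L^2(\UC)$; using the identity $a_j(f'') = -j^2 a_j(f)$ (integration by parts) and Parseval, one obtains $\sum_j j^4 |a_j(f)|^2 < \infty$, and Cauchy--Schwarz gives
\[
\sum_{j \neq 0} |j a_j(f)| \le \Bigl(\sum_{j \neq 0} j^4 |a_j(f)|^2\Bigr)^{1/2} \Bigl(\sum_{j \neq 0} j^{-2}\Bigr)^{1/2} < \infty.
\]
Consequently the partial sums $g_n(z) = i\sum_{|j| \le n} j a_j(f) z^j$ converge uniformly on $\UC$ to a continuous function which, coinciding in $L^2$ with the continuous function $f'$, equals $f'$ everywhere. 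The continuous functional calculus $C(\UC) \to \A$, $g \mapsto g(U)$, being a contractive $*$-homomorphism on unitaries, then delivers norm convergence $f'(U) = i\sum_j j a_j(f) U^j$ in $\A$ for every unitary $U$, in particular for $U = u_s v_{T-s}$ and $U = u_s w_{T-s}$.

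Multiplying the two norm-convergent series and applying $\tau$ produces the first displayed identity. To exchange sum and integral afterwards, I would invoke Fubini: the products $(u_s v_{T-s})^j (u_s w_{T-s})^k$ are unitary, so $|\tau(\cdots)| \le 1$ uniformly in $s$, and
\[
\sum_{j,k \in \Z} |jk\, a_j(f) a_k(f)| \int_0^T |\tau(\cdots)|\, ds \le T \Bigl(\sum_{j \in \Z} |j a_j(f)|\Bigr)^{\!2} < \infty,
\]
which legitimizes the interchange. No genuine obstacle appears; the hypothesis $T > T_0$ is not used in this identification itself, but only to guarantee the a priori convergence of the series in Definition \ref{newsigma} for a general $f \in H^{\frac{1}{2}}(\UC)$ via Proposition \ref{convcov}. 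For $f \in C^{1,1}(\UC)$, the estimate above in fact shows that the series converges absolutely for every $T > 0$ and agrees with the integral definition of $\sigma_T(f,f)$.
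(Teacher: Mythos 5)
Your proof is correct and follows essentially the same route as the paper: Fourier-expand $f'$, use the absolute summability $\sum_j |a_j(f')|<\infty$ coming from $f''\in L^\infty$, substitute into Definition \ref{sigma} via the continuous functional calculus, and interchange sum, trace, and time integral by dominated convergence. You supply the justifications (Parseval plus Cauchy--Schwarz for the summability, contractivity of the functional calculus, the explicit Fubini bound) that the paper states tersely, and your closing observation that the identification holds for all $T\geq 0$ when $f\in C^{1,1}(\UC)$ matches the paper's own conclusion ``for all $T\geq 0$.''
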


\begin{proof} The series $\sum_{j\in \Z} |a_j(f')|$ is convergent, so that 
$S_n(f^\prime)(e^{i\xi}) = i\sum_{|j|\leq n}j a_j(f) e^{ij\xi}$ converges uniformly to $f^\prime$ on $\UC$ as $n$ tends to infinity.
Therefore, starting from Definition \ref{sigma},
$$ \sigma_T(f,f) = -\int_0^T \tau\left(\sum_{j,k \in \Z} jka_j(f)a_k(f) (u_sv_{T-s})^j(u_sw_{T-s})^k\right)ds.$$
As the processes are unitary and $\sum_{j\in\Z} |j||a_j(f)| <\infty$, 
we get by dominated convergence that, for all $T \geq 0$,
\[ \sigma_T(f,f) = - \sum_{j,k \in \Z} jka_j(f)a_k(f) \tau_{j,k}(T),\]
as expected.
\end{proof}

Theorem \ref{cvtohaar} is now a straightforward consequence of the polarisation of Definition \ref{newsigma} and Proposition \ref{convcov}.

\begin{remark}
Let us emphasize that Proposition \ref{taujk} implies that, for all $\epsilon>0$ and all $T>T_{0}$, the following series converges:
\[ \sum_{j,k\in \Z} (|j|+|k|)^{1-\epsilon} |\tau_{j,k}(T)|^{2} <+\infty.\]
Hence, for all $T>T_{0}$, the equality
\[ K_{T}(e^{i\theta},e^{i\phi}) = \sum_{(j,k)\in \Z^2 \setminus\{(0,0)\}} e^{ij\theta} e^{ik\phi} \overline{\tau_{j,k}(T)}\]
defines $K_{T}$ as a square-integrable real-valued function on $\UC^2$ and, for all $\epsilon >0$ and $f,g \in H^{\frac{3}{4}+\epsilon}(\UC)$, one has the equality
\[\sigma_{T}(f,g)=\int_{[0,2\pi]^2} f'(e^{i\theta}) K_{T}(e^{i\theta},e^{i\phi}) g'(e^{i\phi}) \frac{d\theta d\phi}{4\pi^{2}}.\]
\end{remark}


We conclude this study of the covariance by showing some puzzling numerical experiments (see Figure \ref{covariances}). It is striking on these pictures that the behaviour of the covariance $\sigma_T(f,g)$ is complicated and interesting for small $T$, and much simpler for large $T$. It is thus not surprising that we have been only able to analyse $\sigma_T$ for large $T$. 

\begin{figure}[ht!]
\begin{center}
\includegraphics[width=3.5cm]{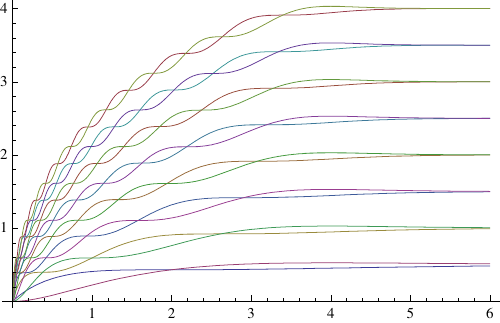} \hspace{0.3cm} \includegraphics[width=3.5cm]{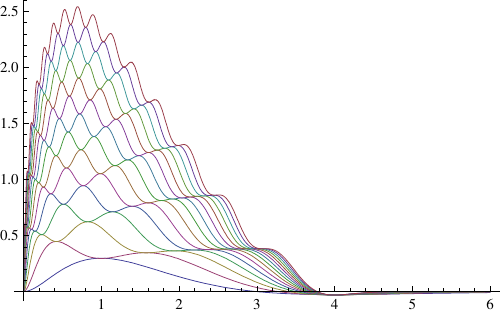}\hspace{0.3cm}\includegraphics[width=3.5cm]{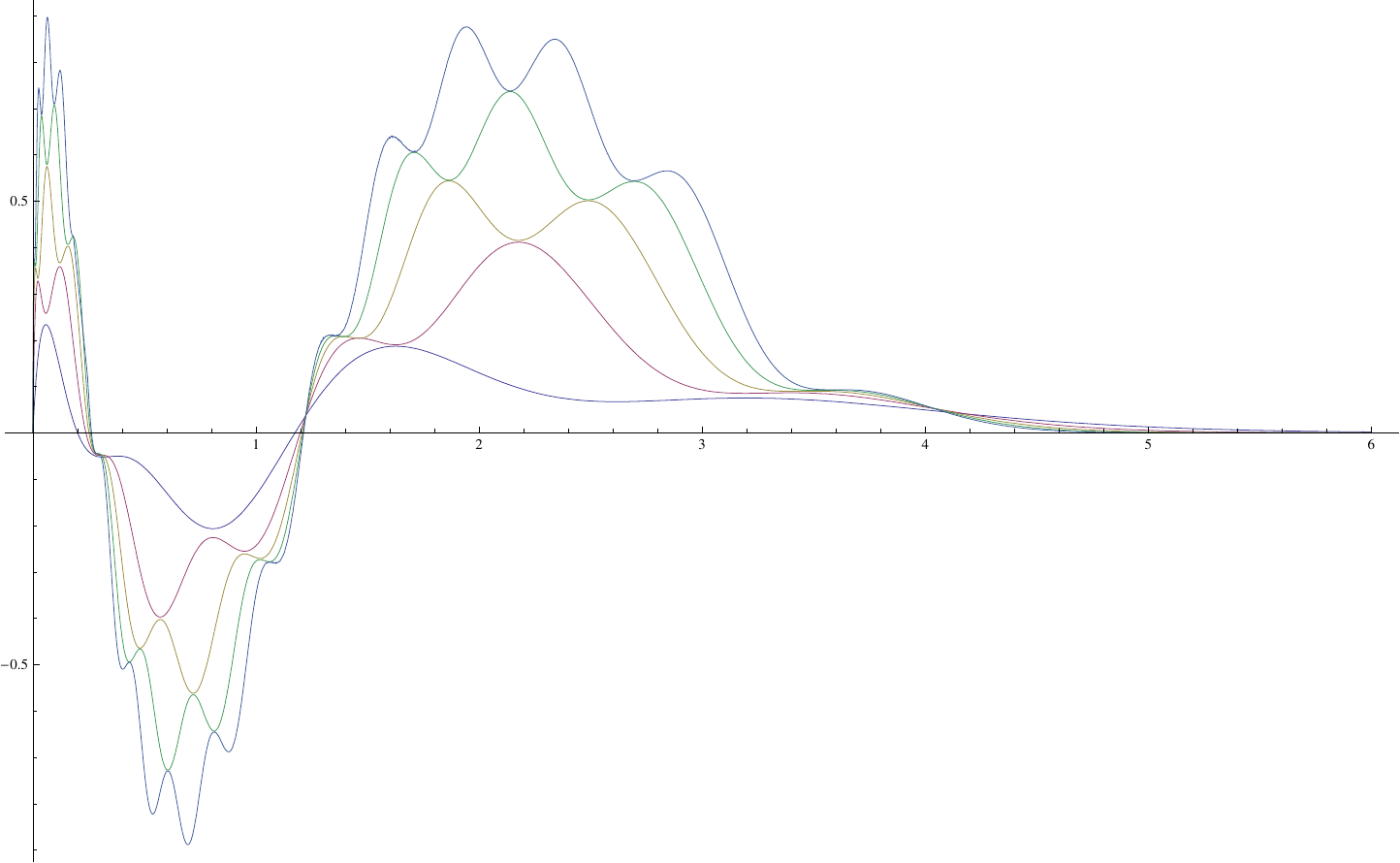} \\
\vskip 3mm
\includegraphics[width=3.5cm]{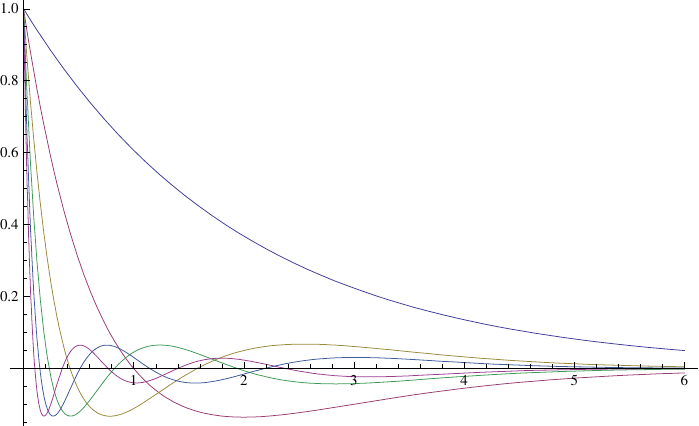} \hspace{0.3cm} \includegraphics[width=3.5cm]{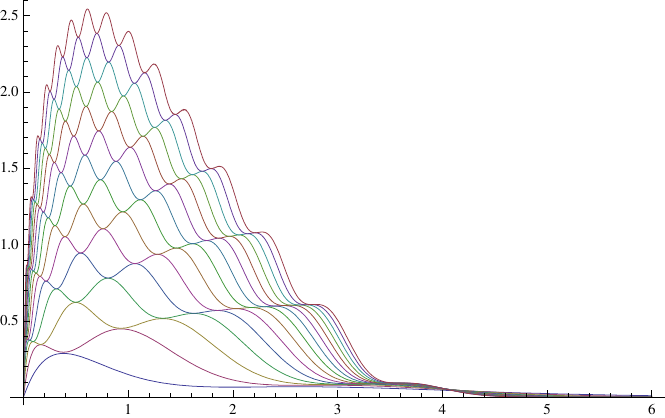}\hspace{0.3cm} \includegraphics[width=3.5cm]{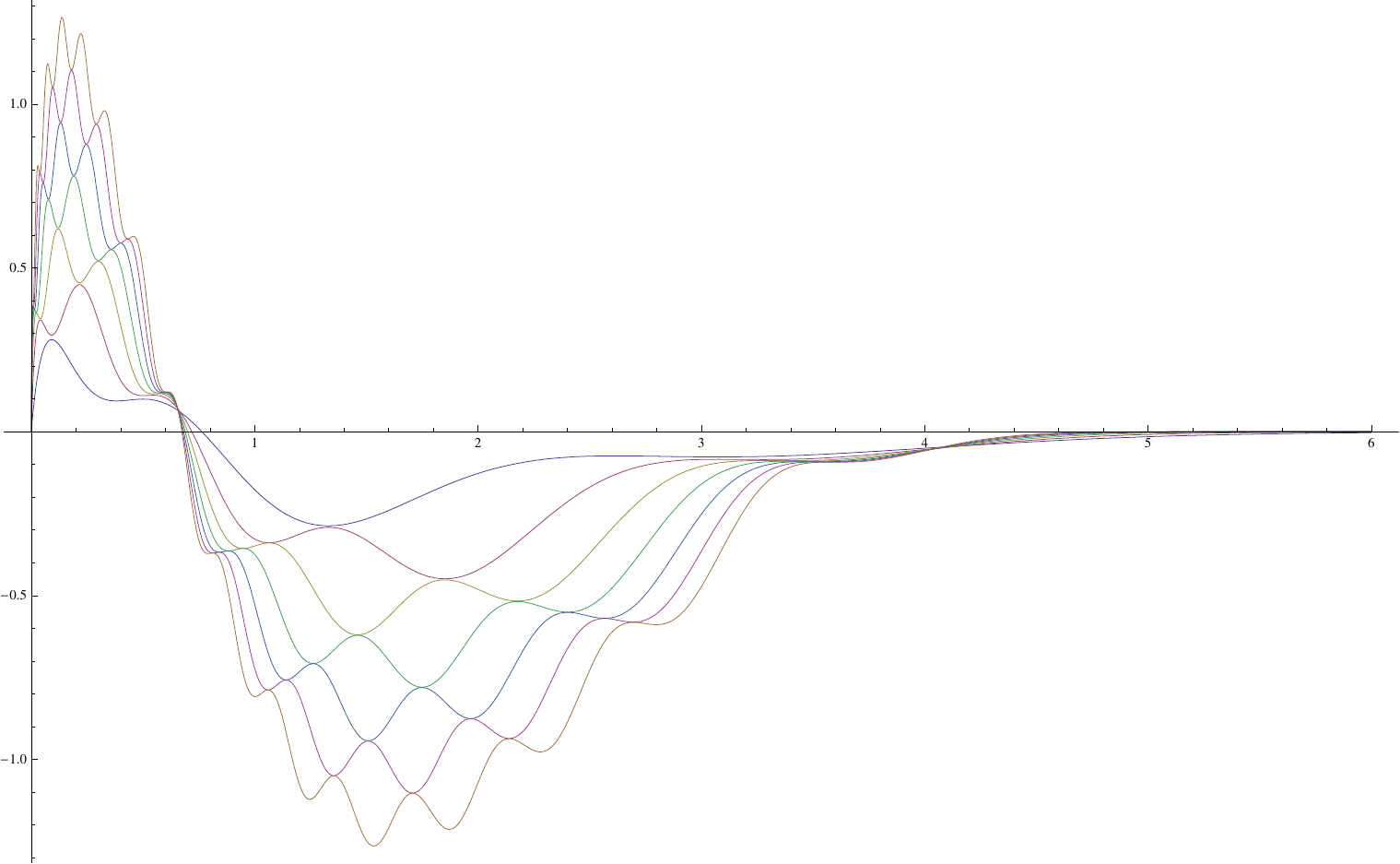}
\end{center}
\caption{\label{covariances} For all $k\geq 1$, let us define $s_{k}(e^{i\theta})=\sin (k\theta)$ and $c_{k}(e^{i\theta})=\cos (k\theta)$. The pictures above are the graphs of the following functions of $T$ for $T\in [0,6]$. Top left : $\sigma_{T}(s_{k},s_{k})$ and $\sigma_{T}(c_{k},c_{k})$ for $k\in \{1,\ldots,8\}$. Bottom left : $\mu_{k}(T)$ for $k\in \{1,\ldots,6\}$. Top center : $\sigma_{T}(s_{k},s_{k+1})$ for $k\in \{1,\ldots,15\}$. Bottom center : $\sigma_{T}(c_{k},c_{k+1})$ for $k\in \{1,\ldots,15\}$. Top right : $\sigma_{T}(s_{k},s_{k+3})$ for $k\in \{1,4,7,10,13\}$. Bottom right : $\sigma_{T}(s_{k},s_{k+2})$ for odd $k\in \{1,\ldots,13\}$.}
\end{figure}

\section{Combinatorial approaches} \label{sec combi}

\subsection{The differential system satisfied by the $\tau_{j,k}$}

The differential system satisfied by the functions $\tau_{j,k}$ (Lemma \ref{freeito}) can be interpreted, at least when $j$ and $k$ have the same sign, in terms of enumeration of walks on the symmetric group, in the same vein as the computations made by one of us in \cite{L}. This is what we explain in this section.

Fix $j \geq 1.$ We consider the Cayley graph on the symmetric group $\mathfrak S_j$
generated by all transpositions. The vertices of this graph are the elements of $\mathfrak S_j$
and two permutations $\sigma_1$ and $\sigma_2$ are joined by an edge if  and only if
$\sigma_1 \sigma_2^{-1}$ is a transposition. A finite sequence $(\sigma_0, \ldots, \sigma_n)$ of permutations such that $\sigma_i$ and $\sigma_{i+1}$ are joined by an edge for all
$i \in \{0, \ldots, n-1\}$ is called a path of length  $n.$ The distance between two permutations is the length of the shortest path that joins them. 
We call defect of a path the number of steps in the path which increase the distance to identity.  Heuristically, one can understand the defect as follows :
each time we compose a permutation with a transposition, either we cut a cycle into two pieces and this is a step which decreases the distance to identity, or we coalesce two cycles into a bigger one and this is a step which increases the distance to identity. The defect counts the number of steps of the second kind.

For any $\sigma \in \mathfrak S_j,$ and  any two integers $n,d \geq 0,$ we denote by 
$S(\sigma, n, d)$ the number of paths in the Cayley graph of $\mathfrak S_j$ starting from $\sigma,$
of length $n$ and with defect $d.$ The interested reader can find more details about those 
combinatorial objects in \cite{L}.

Let $j, k \geq 1.$ If $\sigma \in \mathfrak S_j$ and $\tau \in \mathfrak S_k,$ we denote
by $\sigma \times \tau$ the concatenation of $\sigma$ and $\tau,$ that is the  permutation
in $\mathfrak S_{j+k}$ such that $\sigma \times \tau(i) = \sigma(i)$ if $1 \leq i\leq j$
and $\sigma \times \tau(i) = \tau(i-j)+j$ if $j+1 \leq i\leq j+k.$

From Theorem 3.3 in \cite{L}, it follows that for all $T\geq 0$,
\begin{multline} \label{devcombi}
\E\left[\tr(U_N(T)^j)\tr(U_N(T)^k)\right]- \E\Big[\tr(U_N(T)^j)\Big] \E\left[\tr(U_N(T)^k)\right]  \\
= e^{-(j+k)\frac{T}{2}}\left(\sum_{n,d=0}^\infty \frac{(-T)^n}{n!N^{2d}}S((1 \ldots j)\times(1 \ldots k),n,d) \right. \\
\left.- \sum_{n_1,n_2,d_1,d_2=0}^\infty \frac{(-T)^{n_1+n_2}}{n_1!n_2!N^{2(d_1+d_2)}}S((1 \ldots j),n_1,d_1)S((1 \ldots k),n_2,d_2)\right).
\end{multline}
Moreover, for all $T'\geq 0,$ we recall that all the expansions involved converge uniformly on $(N,T) \in \N\times [0,T']$. 

Using this equality, it is for example easy to check that 
\begin{multline*}
\lim_{N \ra \infty}  \left(\E\left[\tr(U_N(T)^j\tr(U_N(T)^k)\right]- \E\Big[\tr(U_N(T)^j)\Big]\E\left[\tr(U_N(T)^k)\right]\right) = \\
e^{-(j+k)\frac{T}{2}} \sum_{n=0}^\infty \frac{(-T)^n}{n!} 
\bigg( S((1 \ldots j)\times(1 \ldots k),n,0)  \bigg.\\
\bigg. - \sum_{n_1=0}^n \binom{n}{n_1} 
S((1 \ldots j),n_1,0)S((1 \ldots k),n-n_1,0)\bigg) =0,
\end{multline*}
where the last equality comes from Proposition 5.3 of \cite{L}. Each term of the sum is indeed zero and heuristically, it means that a path without defect starting from $(1 \ldots j)\times(1 \ldots k)$ is simply obtained by ``shuffling'' two paths without defect from each of the two cycles
in their respective symmetric group.

More interesting for us is the fact we can also deduce from \eqref{devcombi} that
\begin{multline}
\kappa_{j,k}(T)\build{=}_{}^{\rm{(def)}}\\
\lim_{N \ra \infty}  N^{2} \left( \E\left[\tr(U_N(T)^j\tr(U_N(T)^k)\right]- \E\Big[\tr(U_N(T)^j)\Big]\E\left[\tr(U_N(T)^k)\right]\right) =  \\
e^{-(j+k)\frac{T}{2}} \sum_{n=0}^\infty \frac{(-T)^n}{n!}  S^\prime((1 \ldots j)\times(1 \ldots k),n,1) , \label{sigmanm}
\end{multline}
where, $\sigma \in \mathfrak S_j, \tau \in \mathfrak S_k$ and $n \geq 1$ being given,
we use the notation 
\begin{multline*}
S^\prime(\sigma\times\tau,n,1)= S(\sigma\times\tau,n,1)\\
- \sum_{n_1=0}^n \binom{n}{n_1} \bigg(S(\sigma,n_1,1)S(\tau,n-n_1,0) + S(\sigma,n_1,0)S(\tau,n-n_1,1)\bigg).
\end{multline*}
Thus defined, $S^\prime(\sigma\times\tau,n,1)$ is the number of paths of length $n$ starting from $\sigma\times\tau$ such that the unique step which increases the distance to the identity is the multiplication by a transposition which exchanges an element of $\{1,\ldots,j\}$ with an element of $\{j+1,\ldots,j+k\}$. Thus, heuristically, the unique step which is a coalescence is a coalescence between $\sigma$ and $\tau$.

Our goal is now to show the following combinatorial identity
\begin{prop} \label{itocombi}
For any integers $j,k \geq 1,$ and $n \geq 0,$ we have
\begin{eqnarray*}
S^\prime((1\ldots j) \!\!\!\!\!\! &\times  \!\!\!\!\!& (1\ldots k),n+1,1) = 
jk \,\, S((1 \ldots j+k),n,0) \\
&+ j& \!\!\sum_{l=1}^{j-1} \sum_{p=0}^n \binom{n}{p}S((1 \ldots l),p,0) S^\prime((1\ldots j-l)\times(1\ldots k),n-p,1) \\
&+k& \!\! \sum_{m=1}^{k-1} \sum_{q=0}^n \binom{n}{q} S((1 \ldots m),q,0) S^\prime((1\ldots j)\times(1\ldots k-m),n-q,1).
\end{eqnarray*}

\end{prop}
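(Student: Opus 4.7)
Proof plan. The strategy is to expand the path counted by $S^\prime((1\ldots j)\times(1\ldots k), n+1, 1)$ according to its first step. A path of length $n+1$ beginning at $\sigma_0 = (1\ldots j)\times(1\ldots k)$ moves first to $\sigma_1 = \tau\sigma_0$ for some transposition $\tau\in \mathfrak S_{j+k}$. Depending on the two elements swapped by $\tau$, three disjoint cases arise: (A) both lie in $\{1,\ldots,j\}$, (B) both lie in $\{j+1,\ldots,j+k\}$, or (C) one lies in each block. In (A) and (B), $\tau$ cuts one of the two cycles of $\sigma_0$ into two pieces and is a down-step; in (C), $\tau$ coalesces the two cycles into a single $(j+k)$-cycle and is an up-step. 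Since exactly one up-step (the defect) is allowed and must swap an element of $\{1,\ldots,j\}$ with one of $\{j+1,\ldots,j+k\}$, the defect occurs either at step $1$ (case C) or later (cases A, B); these three scenarios will produce the three terms on the right-hand side.

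In case (C), there are exactly $jk$ such transpositions and each yields a permutation whose cycle type is that of a single $(j+k)$-cycle. Since $S(\cdot,n,0)$ is a class function, the remaining $n$ steps are counted by $S((1\ldots j+k),n,0)$, which contributes the first term. For case (A), fix $l\in\{1,\ldots,j-1\}$ and consider a transposition producing sub-cycles $C_1,C_2$ of lengths $l$ and $j-l$ on complementary subsets of $\{1,\ldots,j\}$. After this first step $\sigma_1$ has three cycles $C_1,C_2,(1\ldots k)$, and the remaining $n$ steps carry exactly one up-step whose $\{1,\ldots,j\}$-leg lies in $C_1$ or $C_2$. Conditioning on the latter (say, that leg lies in $C_2$), the key claim is that the orbit decomposition $\{1,\ldots,j+k\} = (\text{support of }C_1)\sqcup(\text{support of }C_2\cup\{j+1,\ldots,j+k\})$ is preserved by every intermediate step. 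The continuation then factorises bijectively as an interleaving of a zero-defect path from $C_1$ of some length $p$ with a one-defect path from $C_2\cup(1\ldots k)$ of length $n-p$; the $\binom{n}{p}$ interleavings together with cycle-type invariance of $S$ and $S^\prime$ give a contribution $\sum_p \binom{n}{p}S((1\ldots l),p,0)\,S^\prime((1\ldots j-l)\times(1\ldots k),n-p,1)$. The twin sub-case (defect leg in $C_1$) gives the same expression with $l$ and $j-l$ exchanged. Finally, the number of transpositions of $(1\ldots j)$ inducing the unordered split $\{l,j-l\}$ equals $j$ when $l\ne j-l$ and $j/2$ when $l=j-l$; combining these multiplicities with the two sub-cases and reorganising the total as a single sum over $l\in\{1,\ldots,j-1\}$, where each ordered value of $l$ appears exactly once with weight $j$, yields the middle term of the right-hand side. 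Case (B) is entirely analogous and yields the third term.

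The main obstacle is the orbit-invariance claim in the preceding paragraph: one must verify that along any continuation path with exactly one up-step of the prescribed cross-type, no down-step can mix the two blocks. This rests on the elementary fact that a transposition whose two legs lie in different cycles of a permutation always coalesces those cycles and is therefore an up-step, so any down-step acts within a single cycle; an easy induction along the path then shows that the orbit of $C_1$ remains disjoint from that of $C_2\cup(1\ldots k)$ both before and after the unique defect, and that the latter is the only step mixing the two blocks. Once this structural statement is in hand, the factorisation of the continuation, the $\binom{n}{p}$ interleaving counts, and the cycle-type invariance of $S$ and $S^\prime$ combine by a routine tally to give the proposed identity.
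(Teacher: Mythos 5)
Your proposal is correct, but it is a genuinely different proof from the one in the paper. The paper proves Proposition \ref{itocombi} \emph{analytically}: it sets $\kappa_{j,k}(T)=-jk\,\tau_{j,k}(T)$, translates the differential system of Lemma \ref{freeito} into one for $\kappa_{j,k}$, and then substitutes the power-series expansions from \cite{L} expressing $\mu_r(T)=e^{-rT/2}\sum_n \frac{(-T)^n}{n!}S((1\ldots r),n,0)$ and $\kappa_{j,k}(T)=-e^{-(j+k)T/2}\sum_n\frac{(-T)^n}{n!}S'((1\ldots j)\times(1\ldots k),n,1)$; comparing Taylor coefficients yields the identity. The paper even spells out the first-step combinatorial picture you use, but explicitly declines to make it rigorous, saying it will ``propose a rigorous proof of this identity through the free stochastic calculus tools.'' You instead carry out that combinatorial picture: you classify the first step into within-$\{1,\ldots,j\}$ cut, within-$\{j+1,\ldots,j+k\}$ cut, or cross coalescence; for the two cut cases you prove an orbit-invariance lemma (any down-step is a cut inside a single cycle, hence cannot mix the surviving piece $C_1$ with $C_2\cup\{j+1,\ldots,j+k\}$, while the unique cross-block up-step is the only mixing step), deduce a bijective interleaving factorisation giving the $\binom{n}{p}$ coefficients, and finish by a multiplicity tally ($j$ transpositions per split $\{l,j-l\}$ when $l\neq j-l$, $j/2$ when $l=j-l$, which combined with the two sub-cases gives each ordered $l$ weight $j$). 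I checked that the tally closes correctly, including the $l=j/2$ case. Your route is more elementary and self-contained---no free It\^o calculus needed---and gives a transparent bijective explanation; the paper's route is shorter given that Lemma \ref{freeito} and the expansions from \cite{L} are already in hand, and simultaneously ties the combinatorics to the analytic covariance $\sigma_T$.
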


The combinatorial interpretation of this identity is the following : let us consider 
a path of length $n+1$ from 
$(1\ldots j)\times(1\ldots k)$ whose unique step increasing the distance to identity is a true coalescence between the two 
cycles. 
The first step of such a path can be of three kinds, corresponding respectively to the three
terms of the right hand-side :
\begin{itemize}
\item either it coalesces the cycles, creating a $(j+k)$-cycle, and this can be done
by choosing an element in each cycle. Then the path can be completed by any path of length $n$ without defect from a $(j+k)$-cycle.
\item  either it cuts the cycle $(1 \ldots j)$ into two cycles, one of length $l$ that will then be cut $p$ times without being affected by the coalescence and another of length $j-l$ which contains the element which will be exchanged with an element of $\{j+1,\ldots,j+k\}$ during the coalescing step. 
\item either, symmetrically, it cuts the cycle $(1 \ldots k)$.
\end{itemize}

We will hereafter propose a  rigorous proof of this identity through the free stochastic calculus tools introduced above in the paper. It should be noted that the combinatorics which we investigate here is related to that of annular noncrossing partitions introduced by J. Mingo and A. Nica \cite{MingoNica}.
\begin{proof}
Let the integers $j,k \geq 1$ and the real $T \geq 0$ be fixed. If we consider the quantities $\kappa_{j,k}(T)$ as defined in \eqref{sigmanm}, if we denote, for any $r\in \Z,$ by $f_r : \UC 
\ra \C$ the function given by $f_r(z)=z^r,$ then, from Definition \ref{sigma} and Theorem \ref{main theo}, we get
$\kappa_{j,k}(T) = \sigma_T(f_j,f_k)$ and from \eqref{deftaujk}, it can be reexpressed as
$\kappa_{j,k}(T) = -jk\,\, \tau_{j,k}(T).$ Now, from Lemma \ref{freeito}, we get immediately 
\begin{multline*}
\dot\kappa_{j,k}(T) =  -jk\,\mu_{j+k}(T) - \frac{j+k}{2}\, \kappa_{j,k}(T) \\
- j\sum_{l=1}^{j-1}  \mu_l(T) \sigma_{j-l,k}(T)  - k\sum_{m=1}^{k-1}  \mu_m(T) \kappa_{j,k-m}(T),
\end{multline*}
so that we get immediately the anounced result, as we know from \cite{L}
that, for any $r \in \N^*,$
$$ \mu_{r}(T) = e^{-r \frac{T}{2}} \sum_{n=0}^\infty \frac{(-T)^n}{n!}S((1 \ldots r),n,0)$$
and from \eqref{sigmanm} that
$$ \dot\kappa_{j,k}(T) = - \frac{j+k}{2} \kappa_{j,k}(T) - e^{-(j+k)\frac{T}{2}}
\sum_{n=0}^\infty \frac{(-T)^n}{n!}S^\prime((1\ldots j)\times(1 \ldots k),n+1,1).$$
\end{proof}

\subsection{Mixed moments of special unitary matrices}
\label{combi moments}

In principle, any computation involving functions invariant by conjugation on the unitary group can be performed by using harmonic analysis, that is, the representation theory of the unitary group. In this section, we use this approach to prove the following formula, which yields for each $N\geq 3$ an explicit expression for the covariance of traces of powers of the Brownian motion on $\SU(N)$. With the help of Section \ref{u su}, it is easy to deduce the analogous result for the Brownian motion on $\U(N)$.

\begin{theo}\label{covariance traces} Let $N\geq 3$ be an integer. Consider, on $\SU(N)$, the Brownian motion $(V_{N}(t))_{t\geq 0}$ associated with the scalar product $\langle X,Y\rangle_{\su(N)} = N \Tr(X^{*}Y)$ on $\su(N)$. Let $n$ and $m$ be positive integers. Assume that $N\geq n+m+1$. Then
\begin{align*}
\E\left[\Tr(V_{N}(t)^{n}) \overline{\Tr(V_{N}(t)^{m})}\right] &= n\delta_{n,m}  + (-1)^{n+m} e^{-(n+m)\frac{t}{2} - \frac{n(n-1)+m(m-1)}{N}\frac{t}{2}-\frac{(n-m)^{2}}{N^{2}}\frac{t}{2}}\\ & \hspace{-3cm} \sum_{r_{1}=0}^{n-1}\sum_{r_{2}=0}^{m-1} \left [(-1)^{r_{1}+r_{2}} e^{-nr_{1} \frac{t}{2}} \binom{n-1}{r_{1}}\binom{N+r_{1}}{n} e^{-nr_{2} \frac{t}{2}} \binom{m-1}{r_{2}}\binom{N+r_{2}}{m} \right.\\
& \hspace{1cm} \left.\frac{(N+r_{1}+r_{2}+1)(N-n-m+r_{1}+r_{2}+1)}{(N-n+r_{1}+r_{2}+1)(N-m+r_{1}+r_{2}+1)}\right].
\end{align*}
\end{theo}

The basic strategy for the proof is to expand the heat kernel and the traces in the basis of Schur functions, and then to use the multiplication rules for Schur functions and their orthogonality properties. The multiplication rules are expressed by the Littlewood-Richardson formula and they are rather complicated. Fortunately, in the present situation, the Young diagrams which occur are simple enough for the computation to be tractable. 



Let us recall the fundamental facts about Schur functions. Details can be found in \cite{FultonHarris}. A Young diagram is a non-increasing sequence of non-negative integers. If $\lambda=(\lambda_{1}\geq \ldots \ldots \lambda_{k}>0)$ is such a sequence, we call $k$ the length of $\lambda$ and denote it by $\ell(\lambda)$. The set of Young diagrams of length at most $k$ is denoted by $\N^{k}_{\downarrow}$. We draw Young diagrams downwards in rows, according to the convention illustrated by the left part of Figure \ref{Young1}.

\begin{figure}[h!]
\begin{center}
\includegraphics{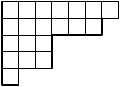} \hspace{2cm} \includegraphics{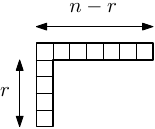}
\end{center}
\caption{\label{Young1} The Young diagram on the left is $(7,6,3,3,1)$, which we also denote by $7\,6\,3^{2}\,1$. The diagram on the right is $\eta_{n,r}=(n-1) \, 1^r$.} 
\end{figure}

The Schur function $s_{\lambda}$ is a symmetric function which, when evaluated on strictly less than $\ell(\lambda)$ variables, yields 0. Whenever $\ell(\lambda)\leq N$, the function $s_{\lambda}$ is well defined and non-zero on $\SU(N)$. Its value $s_{\lambda}(I_{N})$ at the identity matrix in particular is a positive integer, which is the dimension of the irreducible representation of $\SU(N)$ of which $s_{\lambda}$ is the character. Another number attached to $\lambda$ will play an important role for us, which is the non-negative real number $c(\lambda)$ such that $\Delta s_{\lambda}=-c(\lambda)s_{\lambda}$. 

It happens that distinct Young diagrams yield the same function on $\SU(N)$~: if $\lambda$ and $\mu$ are Young diagrams such that $\ell(\lambda),\ell(\mu)\leq N$, then $s_{\lambda}=s_{\mu}$ if and only if there exists $l\in \Z$ such that $\lambda=\mu+(l,\ldots, l)=\mu + l^{N}$. In fact, if $\rho_{\lambda}$ and $\rho_{\mu}$ are the representations of $\U(N)$ corresponding to $\lambda$ and $\mu$, then $\rho_{\lambda}=\rho_{\mu}\otimes \det^{\otimes l}$ and the restrictions of these representations to $\SU(N)$ are equal.

Finally, we need to use the decomposition of the heat kernel and the function $U\mapsto \Tr(U^{n})$ in terms of Schur functions. For the latter, we introduce a class of Young diagrams called hooks. For all $n\geq 1$ and all $r\in \{0,\ldots,n-1\}$, we define
\[\eta_{n,r}=(n-r,\underbrace{1,\ldots,1}_{r})=(n-r) \, 1^{r},\]
which is depicted on the right part of Figure \ref{Young1}.

The heat kernel at time $t$ on $\SU(N)$ is the density, denoted by $Q_{t}:\SU(N)\to \R$, of the distribution of $V_{N}(t)$ with respect to the Haar measure.

\begin{prop} Choose $N\geq 1$ and $U\in \SU(N)$. Then the following equalities hold.\\
1. For all $n\geq 1$, $\displaystyle\Tr(U^{n})=\sum_{r=0}^{n-1} (-1)^{r} s_{\eta_{n,r}}(U)$.\\
2. For all $t\geq 0$, $\displaystyle Q_{t}(U)=\sum_{\lambda\in \N^{N-1}_{\downarrow}} e^{-\frac{c(\lambda)}{2}t} s_{\lambda}(I_{N})s_{\lambda}(U)$.
\end{prop}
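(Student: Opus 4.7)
The plan is to prove the two identities by distinct classical arguments: the first is a symmetric-function identity, and the second is the Peter--Weyl spectral expansion of the heat kernel.

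For (1), I would use the Murnaghan--Nakayama rule applied with empty source partition, which expands the power-sum symmetric function $p_n$ as
\[ p_n = \sum_{\lambda \vdash n,\ \lambda \text{ border strip}} (-1)^{\mathrm{ht}(\lambda)}\, s_\lambda. \]
A border strip of size $n$ rooted at the empty diagram is a connected skew shape containing no $2\times 2$ square, and such shapes are exactly the hooks $\eta_{n,r}$ for $r\in \{0,\ldots,n-1\}$; their height equals $r$. Evaluating at the eigenvalues $(u_1,\ldots,u_N)$ of $U$ and using $p_n(u_1,\ldots,u_N)=\Tr(U^n)$ gives the claim. A self-contained derivation, avoiding the full Murnaghan--Nakayama machinery, would instead start from the Jacobi--Trudi identity $s_{\eta_{n,r}}=h_{n-r} e_r - h_{n-r+1} e_{r-1}$ for $1 \leq r \leq n-1$ (with $s_{\eta_{n,0}}=h_n$) and then telescope against Newton's identity for $p_n$.

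For (2), I would invoke Peter--Weyl and the spectral decomposition of the Laplacian. The irreducible unitary representations of $\SU(N)$ are parametrized by $\N^{N-1}_\downarrow$, and the character of the one labelled by $\lambda$ is the restriction of $s_\lambda$. By Weyl's orthogonality, the family $(s_\lambda)_{\lambda\in \N^{N-1}_\downarrow}$ is an orthonormal Hilbert basis of the space of square-integrable class functions on $\SU(N)$. Each $s_\lambda$ is an eigenfunction of the Laplace--Beltrami operator, satisfying $\Delta s_\lambda = -c(\lambda) s_\lambda$ by the very definition of $c(\lambda)$, and the scalar $c(\lambda)$ is nothing but the Casimir eigenvalue on the corresponding representation. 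Expanding the Dirac mass as $\delta_{I_N} = \sum_\lambda s_\lambda(I_N)\, s_\lambda$ (using $\langle \delta_{I_N}, s_\lambda\rangle = \overline{s_\lambda(I_N)} = s_\lambda(I_N)$, the coefficient being also the dimension), and writing $Q_t = \sum_\lambda a_\lambda(t) s_\lambda$, the heat equation $\partial_t Q_t = \frac{1}{2}\Delta Q_t$ becomes $\dot{a}_\lambda(t) = -\frac{c(\lambda)}{2} a_\lambda(t)$ with $a_\lambda(0)=s_\lambda(I_N)$, whence $a_\lambda(t)=s_\lambda(I_N) e^{-c(\lambda)t/2}$. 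Absolute convergence of the series for $t>0$ then follows from the super-exponential decay of $e^{-c(\lambda)t/2}$ against the polynomial growth of $s_\lambda(I_N)^2$ in $\lambda$.

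Neither identity requires a new idea; the genuine obstacle is bookkeeping for the parametrization of the irreducible representations of $\SU(N)$. On $\U(N)$ the representations are labelled by arbitrary $\lambda\in \Z^N_\downarrow$; on $\SU(N)$ two diagrams give the same representation iff they differ by $(l,\ldots,l)$, and one picks the unique representative of length at most $N-1$. I would need to check that this quotienting is compatible with the Casimir eigenvalue $c(\lambda)$ appearing in the formula, which is the case for the metric $\langle X,Y \rangle_{\su(N)} = N \Tr(X^*Y)$, since the generator $i I_N$ of the kernel of the quotient map $\U(N)\to \U(N)/Z$ lies in $\u(N)\setminus \su(N)$ and therefore drops out of the restricted Casimir.
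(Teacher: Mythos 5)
The paper does not actually prove this proposition; it merely cites references (\cite{Macdonald} for the first identity, \cite{Liao} for the second). Your proposal supplies essentially the proofs that those references contain, so the substance is the same, and both of your main arguments are correct and standard. Two remarks. First, the parenthetical alternative to the Murnaghan--Nakayama argument contains a slip: the identity $s_{\eta_{n,r}}=h_{n-r}e_r-h_{n-r+1}e_{r-1}$ is not a Jacobi--Trudi formula and is false for $r\geq 2$ (for instance $s_{(2,1,1)}=h_2 e_2 - h_3 e_1 + h_4$, not $h_2 e_2-h_3 e_1$); the correct hook expansion, obtained by telescoping the Pieri rule $h_a e_b = s_{(a,1^b)}+s_{(a+1,1^{b-1})}$, is $s_{(n-r,1^r)}=\sum_{i=0}^{r}(-1)^i h_{n-r+i}\,e_{r-i}$. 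Since you offer this only as an alternative to the complete Murnaghan--Nakayama argument, it does not affect the validity of (1). Second, for (2) your compatibility check on $c$ is indeed the thing to worry about, and it is confirmed by the explicit formula for $c(\alpha)$ appearing later in the paper: adding $l^N$ to $\alpha$ leaves the differences $\alpha_i-\alpha_j$ unchanged and shifts $\frac{1}{N}\sum\alpha_i^2$ and $\frac{1}{N^2}\left(\sum\alpha_i\right)^2$ by the same amount, so $c(\alpha+l^N)=c(\alpha)$. One might also note that since $Q_0$ is a Dirac mass, the stated equality at $t=0$ is only distributional, whereas for $t>0$ absolute convergence of the series follows, as you say, from the polynomial growth of $s_\lambda(I_N)$ (Weyl dimension formula) against the super-polynomial decay of $e^{-c(\lambda)t/2}$.
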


The proof of the first equality can be found in \cite{Macdonald}, the proof of the second in \cite{Liao}. The expectation that we want to compute in order to prove Theorem \ref{covariance traces} is thus equal to
\begin{multline*}
\E\left[\Tr(V_{N}(t)^{n}) \overline{\Tr(V_{N}(t)^{m})}\right] =  \sum_{\lambda\in \N^{N-1}_{\downarrow}} e^{-\frac{c(\lambda)}{2}t} s_{\lambda}(I_{N}) \sum_{r_{1}=0}^{n-1}\sum_{r_{2}=0}^{m-1} (-1)^{r_{1}+r_{2}}\\ \int_{\SU(N)} s_{\lambda}(U)s_{\eta_{n,r_{1}}}(U) \overline{s_{\eta_{m,r_{2}}}(U)}\; dU.
\end{multline*}

The multiplication of Schur functions is governed by the Littlewood-Richardson formula, which describes a non-negative integer $N_{\alpha, \beta}^{\gamma}$ for each triple of Young diagrams $\alpha,\beta,\gamma$, in such a way that
\[s_{\alpha}s_{\beta}=\sum_{\gamma} N_{\alpha, \beta}^{\gamma} s_{\gamma}.\]
Using these coefficients, the integral above can be rewritten as 
\begin{align*}
\int_{\SU(N)} s_{\lambda}(U)s_{\eta_{n,r_{1}}}(U) \overline{s_{\eta_{m,r_{2}}}(U)}\; dU &= 
\sum_{\gamma} N_{\lambda, \eta_{n,r_{1}}}^{\gamma} \int_{\SU(N)}s_{\gamma}(U) \overline{s_{\eta_{m,r_{2}}}(U)}\; dU \\
&= \sum_{\gamma} N_{\lambda, \eta_{n,r_{1}}}^{\gamma} \sum_{l\geq 0} {\mathbbm 1}_{\gamma =\eta_{m,r_{2}}+l^{N}}\\
& = \sum_{l\geq 0} N_{\lambda, \eta_{n,r_{1}}}^{\eta_{m,r_{2}}+l^{N}}.
\end{align*}
Thus, we need to compute 
\begin{equation}\label{combi lambda}
\sum_{r_{1}=0}^{n-1}\sum_{r_{2}=0}^{m-1} (-1)^{r_{1}+r_{2}}  \sum_{l\geq 0} N_{\lambda, \eta_{n,r_{1}}}^{\eta_{m,r_{2}}+l^{N}}.
\end{equation}
It turns out that a slightly more general computation is simpler to perform : we compute the Littlewood-Richardson coefficient $N_{\alpha,\eta_{n,r}}^{\beta}$ for all $\alpha,\beta$ and all $n,r$. Let us introduce some notation.

Let $\alpha=(\alpha_{1},\ldots)$ and $\beta=(\beta_{1},\ldots)$ be two Young diagrams. 
Set $|\alpha|=\sum_{i} \alpha_{i}$ and $|\beta|=\sum_{i} \beta_{i}$. We assume that $\alpha\subset \beta$, that is, $\alpha_{i}\leq \beta_{i}$ for all $i$. Then we denote by $\beta/\alpha$ the set of boxes of the graphical representation of $\beta$ which are not contained in $\alpha$. We say that a subset of $\beta/\alpha$ is connected if one can go from any box to any other inside this subset by a path which jumps from a box to another only when they share an edge. 

We denote by $k(\beta/\alpha)$ the number of connected components of $\beta/\alpha$.
Also, we define $v(\beta/\alpha)$ as the number of boxes of $\beta/\alpha$ which are such that the box located immediately above also belongs to $\beta/\alpha$. Alternatively, this is the number of distinct occurrences of the motif formed by two consecutive boxes one above the other in $\beta/\alpha$.

Our main combinatorial result is the following. 

\begin{prop} Let $\alpha$ and $\beta$ be two Young diagrams. Let $\eta_{n,r}$ be a hook. Then $N_{\alpha,\eta_{n,r}}^{\beta}$ is non-zero if and only if the following conditions are satisfied : $\alpha\subset \beta$, $|\beta|=|\alpha|+n$, $\beta/\alpha$ contains no $2\times 2$ square, and $v(\beta/\alpha)\leq r \leq v(\beta/\alpha)+k(\beta/\alpha)-1$. In this case, $N_{\alpha,\eta_{n,r}}^{\beta}=\binom{k(\beta/\alpha)-1}{r-v(\beta/\alpha)}$.
\end{prop}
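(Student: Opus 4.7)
The plan is to identify $N_{\alpha,\eta_{n,r}}^{\beta}$, via the Littlewood--Richardson rule, with the number of semistandard skew tableaux of shape $\beta/\alpha$ and content $(n-r, 1^r)$ whose reverse reading word (each row right-to-left, top to bottom) is a lattice word. The conditions $\alpha\subset\beta$ and $|\beta|=|\alpha|+n$ are immediate from this characterization.

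First I would perform a structural analysis of the non-$1$ entries $2, 3, \ldots, r+1$, each appearing exactly once. An inductive application of the lattice condition shows that in reverse reading order these entries must appear as $2,3,\ldots,r+1$ in this exact order. Two non-$1$ entries in the same row would be read in decreasing order (the larger sitting further right), contradicting this; hence they occupy distinct rows, with row indices strictly increasing in the value, and within each row any non-$1$ must sit at the rightmost position. A short case analysis of the four entries of a hypothetical $2\times 2$ square in $\beta/\alpha$ then forces two non-$1$ entries into a common row, a contradiction. Therefore $\beta/\alpha$ contains no $2\times 2$ square, and its connected components $R_1,\ldots,R_k$ (ordered by their top rows, topmost first) are ribbons with vertical parts $v_1,\ldots,v_k$ summing to $v$.

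Next I treat each ribbon separately. Decompose $R_i$ into $v_i+1$ maximal horizontal runs joined by $v_i$ vertical steps, and let $b_j^{(i)}\in\{0,1\}$ indicate whether run $j$ of $R_i$ contains a non-$1$. At a vertical step between runs $j$ and $j+1$, the strict column inequality forces the leftmost cell of run $j$ (value $\geq 1$) to be strictly less than the rightmost cell of run $j+1$, so the latter is $\geq 2$ and $b_{j+1}^{(i)}=1$. Hence $b_2^{(i)}=\cdots=b_{v_i+1}^{(i)}=1$ and only $b_1^{(i)}$ is free; conversely, any such choice extends to a unique filling of $R_i$, the values at non-$1$ rows being determined by the global row order. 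The global lattice condition forces $b_1^{(1)}=0$: the top row of $\beta/\alpha$ coincides with the first run of $R_1$, and if it contained a non-$1$, that non-$1$ would be the very first character read, with no $1$ preceding it, violating the lattice. Conversely, once $b_1^{(1)}=0$, the top run of $R_1$ contributes at least one $1$ before any non-$1$ is encountered, and the remaining lattice inequalities follow automatically since the non-$1$ values appear in order and the intermediate prefixes are balanced.

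Combining these pieces, the LR tableaux of shape $\beta/\alpha$ and content $\eta_{n,r}$ are in bijection with tuples $(b_1^{(2)},\ldots,b_1^{(k)})\in\{0,1\}^{k-1}$ satisfying $\sum_{i=2}^{k}b_1^{(i)}=r-v$, since the total number of non-$1$s equals $\sum_i(v_i+b_1^{(i)})=v+\sum_{i\geq 2}b_1^{(i)}$. This gives $\binom{k-1}{r-v}$ fillings, which vanishes outside the range $v\leq r\leq v+k-1$, matching the claimed formula. The main obstacle I anticipate is the vertical-step analysis within a ribbon: one must handle uniformly the subtle case where a length-one run carries a non-$1$ at the top of a vertical step, and still conclude that the run immediately below must host a non-$1$. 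Once this local rigidity is established, both the uniqueness of the value assignment and the global lattice check fall out cleanly.
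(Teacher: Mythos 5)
Your argument is correct and follows essentially the same route as the paper's: both pass through the Littlewood--Richardson rule (the paper's ``strict expansions'' are the same objects as your lattice-word skew semistandard tableaux, with the entry recording the row of $\eta_{n,r}$ from which each box is drawn), both use the absence of a $2\times 2$ square to force every component of $\beta/\alpha$ to be a ribbon, and both reduce the count to one binary choice per ribbon, with the topmost ribbon's choice forced, yielding $\binom{k(\beta/\alpha)-1}{r-v(\beta/\alpha)}$. The one place where your version is a little tighter is the forcing of non-$1$s: by invoking at every vertical step only that the lower cell strictly exceeds the upper cell, which is $\geq 1$, you cover uniformly the case of length-one runs (where the cell above a vertical step is itself a forced non-$1$), whereas the paper's description of ``grey'' boxes as those lying directly below a ``white'' box, read literally, omits such cells; the resulting count $v(\beta/\alpha)$ of forced non-$1$s is nevertheless the same in both treatments.
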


\begin{proof} According to the Littlewood-Richardson rule, $N_{\alpha,\eta_{n,r}}^{\beta}$ is the number of strict expansions of $\alpha$ by $\eta_{n,r}$ which yield $\beta$, that is, the number of fillings of $\beta/\alpha$ with the boxes of $\eta_{n,r}$ such that the following conditions are satisfied:\\
1. for all $s\geq 1$, the union of $\alpha$ and the boxes of $\beta/\alpha$ filled by the first $s$ rows of $\eta_{n,r}$ is a Young diagram,\\
2. no two boxes of the first row of $\eta_{n,r}$ are put in the same column of $\beta/\alpha$,\\
3. if one goes through the boxes of $\beta/\alpha$ from right to left and from top to bottom, writing for each box the number of the row of $\eta_{n,r}$ from which is issued the box which has been used to fill it, one obtains a sequence which starts with $1$, and in which all other numbers $2,\ldots,r$ appear, not necessarily consecutively, in this order. 

It is important to notice that, according to the third rule, a strict expansion of $\alpha$ by a hook which yields $\beta$ is completely characterized by the set of boxes of $\beta/\alpha$ which are filled by boxes issued from the first row of the hook. We say for short that these boxes of $\beta/\alpha$ are {\em filled by the first row}.

The first two conditions $\alpha\subset \beta$ and $|\beta|=|\alpha|+n$ are obviously implied by this rule. A less trivial implication is that there cannot exist a strict expansion if $\beta/\alpha$ contains a $2\times 2$ square. Indeed, by the first two rules, the bottom-left box of the square cannot be filled by the first row and the bottom-right box must then be filled with a boxed issued from a strictly lower (in the graphical representation) row of $\eta_{n,r}$. This contradicts the third rule.

Let us assume that $\beta/\alpha$ contains no $2\times 2$ square. Then each connected component of $\beta/\alpha$ is a ``snake'' (see Figure \ref{snake}).
 \begin{figure}[ht!]
\begin{center}
\includegraphics{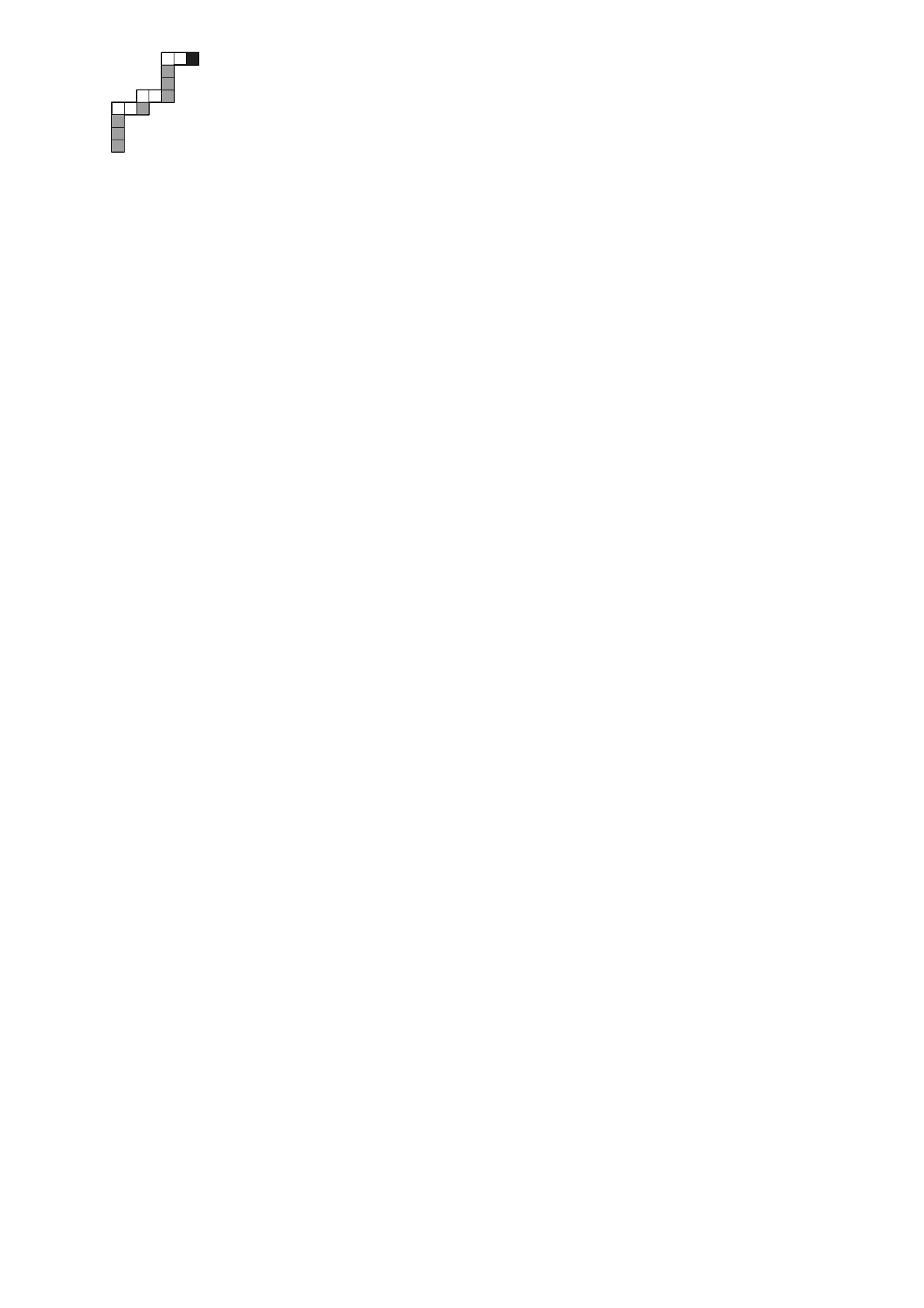}
\end{center}
\caption{\label{snake}White boxes must be filled by boxes issued from the first row of $\eta_{n,r}$. Grey boxes cannot. The black box may or may not, except if this snake is the topmost connected component of $\beta/\alpha$, in which case it must also be filled by a box issued from the first row of $\eta_{n,r}$.}
\end{figure}

Any box of such a snake which has a box on its right must be filled by the first row. These boxes are the white boxes in Figure \ref{snake}. Any box located below a white box cannot be filled by the first row. These boxes are the grey boxes in Figure \ref{snake}. Only one box is not in one of these two cases, the top-right box of the snake. In the topmost connected component of $\beta/\alpha$ the third rule implies that this box must be filled by the first row.

Finally, if the first three conditions are satisfied, then $\beta/\alpha$ contains one box in each connected component, except the topmost one, which can either be filled by the first row or not. The minimal number of boxes which are not filled by the first row is the number of grey boxes, which we have denoted by $v(\beta/\alpha)$. This is the minimal value of $r$ for which there exists a strict expansion of $\alpha$ by $\eta_{n,r}$ which yields $\beta$. Moreover, for this value of $r$, the expansion is unique, since the boxes filled by the first row are completely determined. Similarly, the maximal value of $r$ is $v(\beta/\alpha)+k(\beta/\alpha)-1$. For $r$ between these two bounds, there are exactly as many expansions as there are choices of which snakes have their top-right box filled by the first row. There are thus $\binom{k(\beta/\alpha)-1}{r-v(\beta/\alpha)}$ such expansions.
\end{proof}

\begin{cor}\label{alternate LR} Let $\alpha$ and $\beta$ be two Young diagrams. Choose $n\geq 1$. Then
\[\sum_{r=0}^{n-1} (-1)^{r} N_{\alpha,\eta_{n,r}}^{\beta} = (-1)^{v(\beta/\alpha)}\]
if $\alpha\subset \beta$, $|\beta|=|\alpha|+n$, $\beta/\alpha$ contains no $2\times 2$ square and is connected. Otherwise, it is equal to 0.
\end{cor}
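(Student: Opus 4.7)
The plan is to reduce the alternating sum directly to the explicit formula for the Littlewood-Richardson coefficients provided by the previous proposition, and then recognize the resulting sum as a binomial expansion of $(1-1)^{k-1}$. First I would dispose of the degenerate cases: if any of the three conditions $\alpha\subset\beta$, $|\beta|=|\alpha|+n$, or ``$\beta/\alpha$ contains no $2\times 2$ square'' fails, then by the previous proposition every coefficient $N^{\beta}_{\alpha,\eta_{n,r}}$ in the sum is zero, so the total is $0$ as required. Hence I may assume all three hold, and write $v=v(\beta/\alpha)$, $k=k(\beta/\alpha)$.

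Under these assumptions, the previous proposition asserts that $N^{\beta}_{\alpha,\eta_{n,r}}=\binom{k-1}{r-v}$ for $v\leq r\leq v+k-1$ and vanishes otherwise. Before concluding I would verify that the support $[v,v+k-1]$ of this quantity in $r$ is included in the summation range $[0,n-1]$; this is the only point that requires a short check. It follows from the elementary inequality $v+k\leq n$ for a skew shape with no $2\times 2$ square: each connected component, being a snake of $s_{i}$ boxes, contributes $g_{i}+1$ to $v+k$, where $g_{i}\leq s_{i}-1$ counts the grey ``vertical transition'' boxes, so $\sum_{i}(g_{i}+1)\leq\sum_{i}s_{i}=n$. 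Substituting $j=r-v$ then gives
\[
\sum_{r=0}^{n-1}(-1)^{r}N_{\alpha,\eta_{n,r}}^{\beta}=(-1)^{v}\sum_{j=0}^{k-1}(-1)^{j}\binom{k-1}{j}=(-1)^{v}(1-1)^{k-1},
\]
which equals $(-1)^{v}$ when $k=1$ (i.e.\ when $\beta/\alpha$ is connected) and $0$ when $k\geq 2$. This is precisely the claim. There is no real obstacle; the computation is entirely routine once the previous proposition is in hand, and the only substantive verification is the inclusion of supports handled above.
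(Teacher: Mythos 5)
Your argument is correct and follows the same route as the paper: plug in the formula $N_{\alpha,\eta_{n,r}}^{\beta}=\binom{k-1}{r-v}$ from the preceding proposition and recognise the alternating binomial sum $(1-1)^{k-1}$. The only difference is that you explicitly verify $v+k\leq n$ so that the support $[v,v+k-1]$ lies inside $[0,n-1]$; the paper takes this for granted, and your check (summing $g_i+1\leq s_i$ over connected components) is a correct and welcome addition.
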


\begin{proof} If the first three conditions are not satisfied, then $N_{\alpha,\eta_{n,r}}^{\beta}=0$ for all $r=0\ldots n-1$. Let us assume that they are satisfied. Then, by the previous proposition, the sum above is equal to
\[\sum_{r=v(\beta/\alpha)}^{v(\beta/\alpha)+k(\beta/\alpha)-1} (-1)^{r}\binom{k(\beta/\alpha)-1}{r-v(\beta/\alpha)},\]
which is equal to $0$ unless $k(\beta/\alpha)=1$. In this case, only one term of the sum is non-zero, for $r=v(\beta/\alpha)$.
\end{proof}

We apply now this result when $\beta$ is of the sum of a hook and a rectangle.

\begin{lem} Consider $n\geq m\geq 1$, $r_{2}\in \{0,\ldots,m-1\}$, and $N\geq m+n$. 
For all $r_{1}\in \{0,\ldots,n-1\}$, define
\[\lambda^{N}_{m,r_{2},n,r_{1}}=(n-r_{1}+m-r_{2})\, (n-r_{1}+1)^{r_{2}} \, (n-r_{1})^{N-r_{1}-r_{2}-2}\, (n-r_{1}-1)^{r_{1}}.\]
Then, for all $\lambda\in \N^{N-1}_{\downarrow}$ and all $l\geq 1$,
\[\sum_{r_{1}=0}^{n-1} (-1)^{r_{1}} N_{\lambda,\eta_{n,r_{1}}}^{\eta_{m,r_{2}}+l^{N}} =\left\{\begin{array}{cl} (-1)^{n-l} & \mbox{ if } l\in \{1,\ldots,n\} \mbox{ and } \lambda=\lambda^{N}_{m,r_{2},n,n-l},\\ 0 & \mbox{ otherwise.}\end{array}\right.\]
Moreover, when $l\in \{1,\ldots,n\}$, the only non-zero term of the sum is the term corresponding to $r_{1}=n-l$.

Finally, if $n=m$, then $N_{\lambda,\eta_{n,r_{1}}}^{\eta_{m,r_{2}}}=1$ if $r_{1}=r_{2}$ and $\lambda$ is the empty diagram, and $0$ otherwise.
\end{lem}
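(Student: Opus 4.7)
The plan is to apply Corollary~\ref{alternate LR} with $\alpha=\lambda$ and $\beta=\eta_{m,r_{2}}+l^{N}=(m-r_{2}+l,(l+1)^{r_{2}},l^{N-r_{2}-1})$. This reduces the computation of the alternating sum to the following geometric problem: classify those $\lambda\in\N^{N-1}_{\downarrow}$ for which the skew shape $\beta/\lambda$ is a ribbon (connected and $2\times 2$-free) of exactly $n$ boxes, and, in each case, compute $v(\beta/\lambda)$. The corollary then gives the value $(-1)^{v(\beta/\lambda)}$, while $\lambda$'s that do not meet the ribbon conditions contribute $0$.

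Any such ribbon $R\subset\beta$ is determined by the interval of rows $[i_{1},i_{2}]$ it spans together with its row-lengths $(a_{i_{1}},\ldots,a_{i_{2}})$, and the fact that each row of $R$ must consist of the rightmost boxes of $\beta$ in that row, combined with the ``exactly one common column'' condition between consecutive rows, forces $a_{i}=\beta_{i}-\beta_{i+1}+1$ for $i_{1}\leq i<i_{2}$, leaving $(i_{1},i_{2},a_{i_{2}})$ as the only free data. The condition $\ell(\lambda)\leq N-1$ forces row $N$ of $\beta$ (which has $l$ boxes) to be entirely contained in $R$, whence $i_{2}=N$ and $a_{N}=l$. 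In particular $l\leq n$; when $l>n$ no such ribbon exists and the sum vanishes, accounting for the ``otherwise'' part of the statement.

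For $l\in\{1,\ldots,n\}$ the position of $i_{1}$ is then pinned down by the total-size equation $\sum_{i=i_{1}}^{N-1}(\beta_{i}-\beta_{i+1}+1)+l=n$, which simplifies to $\beta_{i_{1}}=n-N+i_{1}$. Inspecting the three constant regimes of $\beta_{i}$ (namely row $1$, rows $2,\ldots,r_{2}+1$, and rows $r_{2}+2,\ldots,N$) and using $N\geq m+n$ to rule out the first two regimes yields $i_{1}=N-(n-l)$; the resulting ribbon consists of the full row $N$ of $\beta$ together with a vertical strip of $n-l$ boxes in column $l$ at rows $N-(n-l),\ldots,N-1$. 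A direct computation of $\beta\setminus R$ identifies $\lambda$ with $\lambda^{N}_{m,r_{2},n,n-l}$. Uniqueness of this ribbon, combined with the fact that $\beta/\lambda$ is connected so that $k(\beta/\lambda)=1$ and the binomial appearing in the proof of Corollary~\ref{alternate LR} reduces to $\binom{0}{r_{1}-v(\beta/\lambda)}=\delta_{r_{1},n-l}$, yields the ``moreover'' clause.

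Computing $v(\beta/\lambda)$ is then a direct count: each of the $n-l-1$ upper column-$l$ ribbon boxes has a ribbon box directly above it, the box $(N,l)$ also has $(N-1,l)$ above it in the ribbon, and the remaining row-$N$ boxes have no ribbon box above; hence $v(\beta/\lambda)=n-l$ and the sign is $(-1)^{n-l}$ as claimed. The final assertion (the case $n=m$) is immediate from the Littlewood--Richardson rule: $|\lambda|+n=m=n$ forces $\lambda=\emptyset$, after which $N_{\emptyset,\eta_{n,r_{1}}}^{\eta_{m,r_{2}}}=\mathbbm{1}_{r_{1}=r_{2}}$. The main technical obstacle is the case analysis showing that, because of $N\geq m+n$, the top row $i_{1}$ of the ribbon cannot lie in the ``upper'' part of $\beta$ (rows $1,\ldots,r_{2}+1$); this is precisely what forces the uniqueness of $\lambda=\lambda^{N}_{m,r_{2},n,n-l}$.
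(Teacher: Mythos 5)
Your proof is correct and follows essentially the same approach as the paper's: invoke Corollary~\ref{alternate LR}, use $\ell(\lambda)\leq N-1$ to force row $N$ into the ribbon (giving $l\leq n$), and deduce the unique $\lambda$. You simply supply the explicit ribbon analysis, the identification $\lambda=\lambda^{N}_{m,r_2,n,n-l}$, and the count $v(\beta/\lambda)=n-l$, all of which the paper leaves to the reader.
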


\begin{figure}[ht!]
\begin{center}
\scalebox{0.8}{\includegraphics{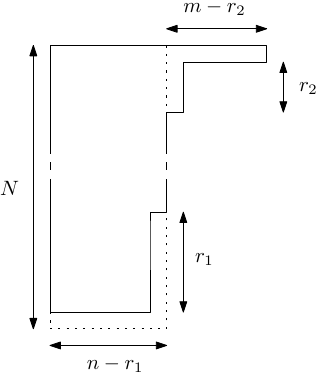}}
\end{center}
\caption{The diagram $\lambda^{N}_{m,r_{2},n,r_{1}}$.}
\end{figure}

\begin{proof} Let us first consider the case $n>m$. In this case, according to  Corollary \ref{alternate LR}, in order for the sum to be non-zero, $\lambda$ must be a Young diagram of length at most $N-1$, contained in $\eta_{m,r_{2}}+l^{N}$, such that $(\eta_{m,r_{2}}+l^{N})/\lambda$ contains no $2\times 2$ square and is connected. Since $n>m$, $l$ must be positive, so that the diagram $\eta_{m,r_{2}}+l^{N}$ has length $N$ whereas $\lambda$ has length at most $N-1$. Thus, the $N$-th row of $(\eta_{m,r_{2}}+l^{N})/\lambda$ is not empty, it has actually length $l$. In particular, $|\eta_{m,r_{2}}+l^{N}|-|\lambda|\geq l$. If $l>n$, all the Littlewood-Richardson coefficients appearing in the sum are zero. Otherwise, if $l\leq n$, there is exactly one way to choose $\lambda$ a subdiagram of $\eta_{m,r_{2}}+l^{N}$ such that all conditions are satisfied : it is $\lambda=\lambda^{N}_{m,r_{2},n,n-l}$. 

When $n=m$, nothing changes for $l\geq 1$. However, the sum may be non-zero even for $l=0$. The diagram $\lambda$ must be the empty diagram and it is easy to check that $N_{\varnothing,\eta_{n,r_{1}}}^{\eta_{m,r_{2}}}=\delta_{n,m}\delta_{r_{1},r_{2}}$.
\end{proof}

We can now go on to compute \eqref{combi lambda}. We find the following result.

\begin{prop}\label{cov but c and s} Let $N$, $n$ and $m$ be three positive integers. Assume that $n\geq m$ and $N\geq n+m+1$. Then
\[\E\left[\Tr(V_{N}(t)^{n}) \overline{\Tr(V_{N}(t)^{m})}\right] = n\delta_{n,m}+\sum_{r_{1}=0}^{n-1}\sum_{r_{2}=0}^{m-1} (-1)^{r_{1}+r_{2}} e^{-\frac{c(\lambda^{N}_{m,r_{2},n,r_{1}})}{2}t} s_{\lambda^{N}_{m,r_{2},n,r_{1}}}(I_{N}).\] 
\end{prop}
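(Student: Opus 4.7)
The plan is to feed the Schur-function expansions of the heat kernel $Q_t$ and of the power trace $\Tr(U^n)$ directly into the expectation, and to collapse the resulting multiple sum using the Littlewood-Richardson lemma that was just proved. The starting point is precisely the identity already displayed just above the proposition:
$$\E\left[\Tr(V_N(t)^n)\,\overline{\Tr(V_N(t)^m)}\right] = \sum_{\lambda \in \N^{N-1}_\downarrow} e^{-\frac{c(\lambda)}{2}t}\, s_\lambda(I_N) \sum_{r_1=0}^{n-1} \sum_{r_2=0}^{m-1} (-1)^{r_1+r_2} \sum_{l \geq 0} N_{\lambda,\, \eta_{n,r_1}}^{\eta_{m,r_2}+l^N}.$$

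The first step is to interchange the orders of summation so that, for each fixed triple $(r_2,\lambda,l)$, the innermost sum is exactly the alternating sum $\sum_{r_1}(-1)^{r_1} N_{\lambda,\eta_{n,r_1}}^{\eta_{m,r_2}+l^N}$ that the preceding lemma evaluates. For $l \in \{1, \ldots, n\}$, the lemma forces $\lambda = \lambda^N_{m,r_2,n,n-l}$, gives the value $(-1)^{n-l}$, and—this is the crucial uniqueness clause—asserts that only $r_1 = n-l$ in fact contributes. Performing the change of variable $r_1 = n - l$, which bijects $\{1, \ldots, n\}$ with $\{0, \ldots, n-1\}$ and converts $(-1)^{n-l}$ into $(-1)^{r_1}$, reconstructs the double sum over $(r_1,r_2)$ appearing in the statement.

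The only remaining possibility is $l = 0$, which by the last sentence of the lemma is non-vacuous only when $n = m$. There the diagram $\lambda$ is forced to be empty and the inner sum collapses to $N_{\varnothing, \eta_{n,r_1}}^{\eta_{m,r_2}} = \delta_{r_1, r_2}$. Since $c(\varnothing) = 0$ and $s_\varnothing(I_N) = 1$, this contributes $\sum_{r_2=0}^{m-1} (-1)^{2r_2} = n$ under $n = m$, producing the diagonal term $n\delta_{n,m}$ of the statement. Summing the two contributions yields the announced formula.

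The proof is therefore essentially a matter of bookkeeping, with no further computation beyond what the lemma already delivers. The main obstacle, such as it is, is keeping straight the three nested signs, the exchange of summations, and the special role of $l=0$ in producing the "$n\delta_{n,m}$" piece that captures the variance of the Haar limit. The hypothesis $N \geq n+m+1$ enters only to guarantee that the Young diagrams $\lambda^N_{m,r_2,n,r_1}$ arising above have length $N-1$ (and so legitimately belong to $\N^{N-1}_\downarrow$), which is implicit in the lemma.
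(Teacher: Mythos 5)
Your proof is correct and follows precisely the route taken in the paper: starting from the Schur-function expansion already displayed, interchange summations so the lemma evaluates the alternating sum over $r_{1}$, re-index $l\mapsto r_{1}=n-l$ on $\{1,\ldots,n\}$, and handle the residual $l=0$ contribution (non-vacuous only when $n=m$) to produce the $n\delta_{n,m}$ term. There is nothing missing; this is essentially the paper's own argument.
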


\begin{proof} We have
\begin{align*}
\sum_{r_{1}=0}^{n-1}\sum_{r_{2}=0}^{m-1} (-1)^{r_{1}+r_{2}}  \sum_{l\geq 0} N_{\lambda, \eta_{n,r_{1}}}^{\eta_{m,r_{2}}+l^{N}}&=n\delta_{m,n}{\mathbbm 1}_{\lambda=\emptyset}+ \sum_{r_{2}=0}^{m-1} (-1)^{r_{2}}\sum_{l=1}^{n} (-1)^{n-l} {\mathbbm 1}_{\lambda=\lambda^{N}_{m,r_{2},n,n-l}}\\
&=n\delta_{m,n}{\mathbbm 1}_{\lambda=\emptyset}+\sum_{r_{1}=0}^{n-1}\sum_{r_{2}=0}^{m-1} (-1)^{r_{1}+r_{2}} {\mathbbm 1}_{\lambda=\lambda^{N}_{m,r_{2},n,r_{1}}}.
\end{align*}
The claimed equality follows easily.
\end{proof}

In order to prove Theorem \ref{covariance traces}, there remains to compute $c(\lambda^{N}_{m,r_{2},n,r_{1}})$ and $s_{\lambda^{N}_{m,r_{2},n,r_{1}}}(I_{N})$. This is by no means complicated but slightly tedious. We recall the general formulae, give the results in this particular case and invite the reader to check them if s/he feels inclined to do so.

\begin{lem}\label{c and s} Consider $n,m\geq 1$, $r_{1}\in \{0,\ldots,n-1\}$ and $r_{2}\in \{0,\ldots,m-1\}$. Then the following identities hold.
\[
c(\lambda^{N}_{m,r_{2},n,r_{1}})=n+\frac{n(n-2r_{1}-1)}{N}+m+\frac{m(m-2r_{2}-1)}{N}-\frac{(n-m)^{2}}{N^{2}},\]
\begin{multline*}
s_{\lambda^{N}_{m,r_{2},n,r_{1}}}(I_{N})=\frac{(N-r_{1}-r_{2}-1)(N+n+m-r_{1}-r_{2}-1)}{(N+n-r_{1}-r_{2}-1)(N+m-r_{1}-r_{2}-1)}\times \\
  \binom{n-1}{r_{1}}\binom{N+n-r_{1}-1}{n}
\binom{m-1}{r_{2}}\binom{N+m-r_{2}-1}{m}.
\end{multline*}
\end{lem}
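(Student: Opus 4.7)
The lemma is a direct computation from two classical identities applied to the explicit diagram $\lambda = \lambda^N_{m,r_2,n,r_1}$. The key feature of this diagram is that its rows organise into four homogeneous blocks: one row of length $\mu + m - r_2$, then $r_2$ rows of length $\mu + 1$, then $N - r_1 - r_2 - 2$ rows of length $\mu$, and finally $r_1$ rows of length $\mu - 1$, where $\mu = n - r_1$; these are followed, when one pads to length $N$, by a zero row at index $N$.

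For the Casimir, I would first recall that the $\U(N)$-Casimir associated with the scalar product $\langle X, Y \rangle_{\u(N)} = N\Tr(X^*Y)$ acts on $s_\lambda$ as multiplication by $\tfrac{1}{N}\sum_i \lambda_i(\lambda_i + N + 1 - 2i)$. Since the orthogonal complement of $\su(N)$ in $\u(N)$ is spanned by the unit vector $\tfrac{i}{N}I_N$ (noting $\|iI_N\|_{\u(N)} = N$), and since $\L_{iI_N}^2 s_\lambda = -|\lambda|^2 s_\lambda$, we have
\[
c(\lambda) = \frac{1}{N}\sum_{i=1}^{N}\lambda_i(\lambda_i + N + 1 - 2i) - \frac{|\lambda|^2}{N^2}.
\]
I would then compute $\sum_i \lambda_i$, $\sum_i \lambda_i^2$ and $\sum_i i\lambda_i$ by summing arithmetic progressions over each of the four blocks, and substitute to obtain the claimed expression.

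For the dimension, I would use Weyl's formula
\[
s_\lambda(I_N) = \prod_{1 \leq i < j \leq N} \frac{\lambda_i - \lambda_j + j - i}{j - i}.
\]
Every pair $(i, j)$ lying entirely inside a single block satisfies $\lambda_i = \lambda_j$ and contributes a factor $1$. The remaining contributions I would group by the pair of distinct blocks containing $i$ and $j$. Most such contributions are rising factorials; combined with the denominator $\prod_{i < j}(j - i) = \prod_{k=1}^{N-1} k!$, they telescope into the four binomial coefficients $\binom{n-1}{r_1}$, $\binom{N + n - r_1 - 1}{n}$, $\binom{m-1}{r_2}$ and $\binom{N + m - r_2 - 1}{m}$. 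The interactions of the two extremal rows --- the top row of length $\mu + m - r_2$, and the implicit zero row at index $N$ --- with the four inner blocks produce the residual rational prefactor $\frac{(N - r_1 - r_2 - 1)(N + n + m - r_1 - r_2 - 1)}{(N + n - r_1 - r_2 - 1)(N + m - r_1 - r_2 - 1)}$.

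The main obstacle is the bookkeeping in the Weyl product: one must identify exactly which numerator factors fuse into each binomial coefficient, isolate those that produce the rational prefactor, and check that every index shift in $N \pm r_1, \pm r_2, \pm n, \pm m$ is as claimed. Testing small cases (for instance $n = m = 1$, $N = 3$, which gives $\lambda = (2, 1)$, $c(\lambda) = 2$ and $s_\lambda(I_3) = 8$) provides a useful sanity check along the way.
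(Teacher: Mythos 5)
Your proposal is correct and takes essentially the same route as the paper, which simply records the two classical formulae (the Casimir eigenvalue $c(\alpha)=\tfrac{1}{N}(\sum_i\alpha_i^2+\sum_{i<j}(\alpha_i-\alpha_j))-\tfrac{|\alpha|^2}{N^2}$ and the Weyl dimension quotient $\Delta(\alpha+\delta)/\Delta(\delta)$) and leaves the block-by-block substitution to the reader; your version of $c(\lambda)$, obtained by splitting off the center's contribution $\L_{\frac{i}{N}I_N}^2 s_\lambda = -\tfrac{|\lambda|^2}{N^2}s_\lambda$, is algebraically identical to the paper's after noting $\sum_{i<j}(\alpha_i-\alpha_j)=\sum_i\alpha_i(N+1-2i)$. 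The only extra value you add is the explicit description of where the binomial coefficients and the rational prefactor come from in the Weyl product, plus a small sanity check --- both of which are welcome given that the paper declines to carry out the computation.
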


\begin{proof} The general formulae are the following : for all $\alpha\in \N^{N}_{\downarrow}$, one has
\[c(\alpha)=\frac{1}{N}\left(\sum_{i=1}^{N}\alpha_{i}^{2} +\sum_{1\leq i<j\leq N} (\alpha_{i}-\alpha_{j}) \right)-\frac{1}{N^{2}} \left(\sum_{i=1}^{N}\alpha_{i} \right)^{2}\]
on one hand and, using the notation $\Delta(\lambda)=\prod_{1\leq i<j \leq N} (\lambda_{i}-\lambda_{j})$ and $\delta=(N-1,N-2,\ldots,1,0)$,
\[s_{\alpha}(I_{N})=\frac{\Delta(\alpha+\delta)}{\Delta(\alpha)}\]
on the other hand.
\end{proof}

Theorem \ref{covariance traces} now easily  follows from Proposition \ref{cov but c and s} and Lemma \ref{c and s}.

\def\cprime{$'$}

\end{document}